\documentclass[11pt,reqno]{amsart}

\usepackage[margin=1.15in]{geometry}
\usepackage[T1]{fontenc}
\usepackage[utf8]{inputenc}
\usepackage{lmodern}
\usepackage{microtype}
\usepackage{mathtools,amssymb,amsfonts,amsmath,amsthm}
\usepackage{bm,mathrsfs}
\usepackage{booktabs}
\usepackage{array}
\newcolumntype{L}[1]{>{\raggedright\arraybackslash}p{#1}}
\usepackage{enumitem}
\usepackage{xcolor}
\usepackage{graphicx}
\usepackage{url}
\usepackage[numbers,sort&compress]{natbib}
\usepackage[colorlinks=true,linkcolor=blue!55!black,citecolor=blue!55!black,urlcolor=blue!55!black]{hyperref}
\usepackage[nameinlink,capitalize,noabbrev]{cleveref}

\allowdisplaybreaks
\numberwithin{equation}{section}
\setlist[itemize]{leftmargin=1.5em,itemsep=0.25em,topsep=0.35em}
\setlist[enumerate]{leftmargin=1.8em,itemsep=0.25em,topsep=0.35em}

\newtheorem{theorem}{Theorem}[section]
\newtheorem{proposition}[theorem]{Proposition}
\newtheorem{lemma}[theorem]{Lemma}
\newtheorem{corollary}[theorem]{Corollary}
\newtheorem{assumption}[theorem]{Assumption}

\crefname{assumption}{assumption}{assumptions}
\Crefname{assumption}{Assumption}{Assumptions}
\theoremstyle{definition}

\newtheorem{example}[theorem]{Example}
\theoremstyle{remark}
\newtheorem{remark}[theorem]{Remark}

\newcommand{\X}{\mathsf X}
\newcommand{\Pcal}{\mathcal P}

\newcommand{\A}{\mathsf A}
\newcommand{\Q}{\mathsf Q}
\newcommand{\G}{\mathsf G}
\newcommand{\K}{\mathsf K}
\newcommand{\D}{\mathsf D}
\newcommand{\PhiPot}{\Phi}
\newcommand{\divpi}{\operatorname{div}_{\pi}}
\newcommand{\KL}{\operatorname{KL}}
\newcommand{\Law}{\operatorname{Law}}
\newcommand{\supp}{\operatorname{supp}}
\newcommand{\Tr}{\operatorname{Tr}}
\newcommand{\osc}{\operatorname{osc}}
\newcommand{\Lip}{\operatorname{Lip}}
\newcommand{\Dom}{\operatorname{Dom}}
\newcommand{\Id}{\mathrm{Id}}
\newcommand{\R}{\mathbb R}
\newcommand{\N}{\mathbb N}
\newcommand{\E}{\mathbb E}
\newcommand{\dd}{\,\mathrm d}
\newcommand{\ip}[2]{\left\langle #1,#2\right\rangle}
\newcommand{\norm}[1]{\left\lVert #1\right\rVert}
\newcommand{\abs}[1]{\left\lvert #1\right\rvert}
\newcommand{\eps}{\varepsilon}

\newcommand{\Wone}{W_1}

\newif\ifdraftnotes
\draftnotesfalse

\title[Uniform-in-time chaos for Green--Bessel SVGD]{Target-Adapted Green--Bessel SVGD:\\ Uniform-in-Time Propagation of Chaos and Last-Iterate Consistency}

\author{Trevor Teolis}
\author{Maarten V. de Hoop}
\thanks{Both authors: Department of Computational Applied Mathematics and Operations Research, Rice University, Houston, Texas, USA. Email: \href{mailto:tt111@rice.edu}{\texttt{tt111@rice.edu}} (Trevor Teolis); \href{mailto:mvd2@rice.edu}{\texttt{mvd2@rice.edu}} (Maarten V. de Hoop).}

\makeatletter
\@ifundefined{subjclassname@2020}{\@namedef{subjclassname@2020}{\textup{2020} Mathematics Subject Classification}}{}
\makeatother
\subjclass[2020]{Primary 60K35, 35Q70, 65C35; Secondary 46E35, 47D07, 68T07}
\keywords{Stein variational gradient descent, propagation of chaos, target-adapted kernels, Langevin generator, negative Sobolev discrepancy, interacting particles, last-iterate consistency}

\begin{document}

\begin{abstract}
We prove uniform-in-time propagation of chaos and last-iterate consistency for a target-adapted Stein variational gradient descent (SVGD) flow on compact connected manifolds.  The target has a smooth positive density, and the particles start independently from a fixed smooth nonnegative density ratio.  The construction uses the Green--Bessel operator $\Q_{r,\pi}=\A_\pi^{-1}(\Id+\A_\pi)^{-r}$ of the reversible target Langevin generator.  Sufficient Bessel smoothing gives a scalar kernel with finite diagonal, and a positive matrix lift realizes its potential force as a Stein velocity.  Population and empirical flows then dissipate the same finite target discrepancy.  Population entropy and the target spectral gap give decay of this discrepancy; a finite-time particle comparison reaches a time after which common-energy monotonicity controls every later time.  The resulting expected uniform discrepancy is $O((\log N)^{-1/2})$, with a corresponding logarithmic $W_1$ bound and consistency along every sequence $t_N\to\infty$.  We also prove an exact finite-mode approximation theorem with an explicit spatial-resolution error and a feature-factorized particle implementation.  For confining Euclidean targets, we establish static kernel and moment results and give a conditional dynamical extension under explicit population-regularity and transport hypotheses.
\end{abstract}

\maketitle

\section{Introduction}
\label{sec:introduction}

Can a deterministic particle sampler remain accurate at arbitrarily late times as the number of particles grows?  For Stein variational gradient descent (SVGD), the population flow and its particle approximation pose different parts of this question.  The population may converge to the target, while the error in a finite-time particle comparison grows with the observation horizon.  We construct a target-adapted Stein kernel for which a common energy joins these two statements into an all-time guarantee on compact manifolds.

SVGD moves interacting particles $X_1^N(t),\ldots,X_N^N(t)$ using a velocity obtained from a reproducing kernel and the target Stein operator \citep{liu2016stein,liu2017stein}.  Their empirical measure is
\[
\mu_t^N=\frac1N\sum_{i=1}^N\delta_{X_i^N(t)}.
\]
Replacing this measure by a deterministic law $\rho_t$ gives the population continuity equation.  For regular kernels, mean-field theory compares $\mu_t^N$ and $\rho_t$ on fixed intervals \citep{lu2019scaling,korba2020nonasymptotic,duncan2023geometry}.  A bound of order $C_T/\sqrt N$ can also justify growing horizons $t_N$ when $C_{t_N}/\sqrt N\to0$; restricted growing-time results are available for SVGD \citep{carrillo2025convergence}.  The issue is to control times beyond that restriction.

Our compact theorem proves
\begin{equation}
\label{eq:intro-physical-time}
\E\sup_{t\ge0}\Wone(\mu_t^N,\rho_t)\longrightarrow0.
\end{equation}
Together with population convergence, this gives $\E W_1(\mu_{t_N}^N,\pi)\to0$ for every deterministic sequence $t_N\to\infty$.  It concerns the particle cloud at the selected time.  The supremum inside the expectation in \eqref{eq:intro-physical-time} controls the whole random trajectory; the weaker ordering $\sup_t\E W_1\to0$ would already suffice for deterministic observation times.

\subsection{One energy for particles and population}

Population entropy alone cannot supply a configuration-wise particle argument: for a smooth positive target $\pi$, the empirical measure is atomic and $\KL(\mu_t^N\mid\pi)=+\infty$.  Entropy of the joint particle law remains useful for other particle guarantees \citep{banerjee2026improved}.  Here we use a target discrepancy that is finite for each particle configuration and decreases along both flows.  Entropy is then needed only to make the population approach the target in that discrepancy.

The construction starts from the nonnegative target Langevin generator $\A_\pi=-\operatorname{div}_\pi\nabla$ in $L^2(\pi)$, where $\operatorname{div}_\pi F=\pi^{-1}\nabla\cdot(\pi F)$.  On the subspace $L^2_0(\pi)$ of centered functions, set
\begin{equation}
\label{eq:intro-Q}
\Q_{r,\pi}=\A_\pi^{-1}(\Id+\A_\pi)^{-r}.
\end{equation}
The Green factor $\A_\pi^{-1}$ adapts the discrepancy to the target's spectral gap.  The Bessel factor smooths its scalar kernel $\G_{r,\pi}$ enough to give finite diagonal energy and a regular force on point masses.  We use the corresponding maximum mean discrepancy (MMD) \citep{sriperumbudur2010hilbert}:
\begin{equation}
\label{eq:intro-discrepancy}
\D_{r,\pi}(\mu,\nu)^2
=\iint\G_{r,\pi}(x,y)\,(\mu-\nu)(\dd x)(\mu-\nu)(\dd y).
\end{equation}
The precise measure domain and its dual formulation are given in \Cref{sec:framework}.

The Wasserstein gradient velocity of one half this squared MMD is the negative gradient of the error potential, as in the MMD-flow framework of \citet{arbel2019mmd}.  To realize this velocity as SVGD, we construct a positive matrix kernel in the framework of \citet{wang2019matrix} whose source Stein divergence is $-\nabla_x\G_{r,\pi}$.  Thus the target energy determines the force, and the matrix lift makes that force a Stein direction.  \Cref{sec:kernel-construction} proves the identity and \Cref{sec:energy-population} proves dissipation for both smooth laws and atomic measures.

\subsection{Why a finite-time comparison is enough}

Three facts drive the proof.  Both flows dissipate $\D_{r,\pi}(\cdot,\pi)^2$.  Population entropy dissipation and the target Poincar\'e gap give $\D_{r,\pi}(\rho_T,\pi)\le CT^{-1/2}$.  Finally, smooth compact-state interactions give a particle--population comparison of order $Ce^{CT}/\sqrt N$ on $[0,T]$.

The common energy controls what happens after that comparison interval.  For every $t\ge T$, the triangle inequality and target-energy monotonicity give
\begin{equation}
\label{eq:intro-stitching}
\D_{r,\pi}(\mu_t^N,\rho_t)
\le\D_{r,\pi}(\mu_T^N,\rho_T)
+2\D_{r,\pi}(\rho_T,\pi).
\end{equation}
Choose the deterministic comparison time $T$ so that the population term is small, then take $N$ large at that fixed $T$.  No later particle comparison is needed.  Choosing $T$ proportional to $\log N$ also gives the quantitative rate in the main theorem.  The observation time $t$ itself remains unrestricted.

\subsection{Results and scope}

\Cref{thm:compact-main} gives the complete result for a smooth positive target on a compact connected manifold, sufficiently large integer $r$, and i.i.d. initialization from a smooth nonnegative density ratio.  This initial law is fixed as $N$ grows and may be far from the target.  The uniform discrepancy bound is $C[\log(2+N)]^{-1/2}$; an interpolation estimate gives the $W_1$ bound $C[\log(2+N)]^{-1/[2(r+2)]}$.  Every fixed labelled marginal converges to the corresponding product target along every sequence $t_N\to\infty$.

The smoothing and compactness hypotheses have distinct purposes.  Smoothing makes the energy and force meaningful on empirical measures.  Compactness supplies global force bounds and a quantitative passage from discrepancy to $W_1$; connectedness supplies the positive target gap used for population attraction.  The proof establishes the required flow and comparison estimates from these conditions.

\Cref{thm:finite-mode-main} retains finitely many exact target eigenmodes.  The truncated dynamics preserves the same identities, with an additional spatial-resolution error of order $\Lambda^{-1/2}$ at spectral cutoff $\Lambda$.  A growing cutoff gives a joint particle--mode--time consistency theorem.  Once $L$ mode values and their gradients have been evaluated, the feature factorization costs $O(NLd)$ per velocity evaluation.  The theorem concerns exact modes; approximate eigenpairs require control of the generator residual described in \Cref{sec:approx-eigenpairs}.

On $\R^d$, the confinement conditions in \Cref{ass:euclidean-target} give local kernels, moment control, separation of measures, and global existence of finite empirical systems.  The all-time dynamical extension in \Cref{thm:euclidean-main} additionally assumes population regularity and a precise transport bound.  Common-covariance Gaussian mixtures satisfy the static conclusions and an explicit gap estimate; their dynamical conclusion has the same additional requirements.  This extension identifies which parts of the compact argument survive on an unbounded state space.

\subsection{Relation to existing work}

The inverse-generator construction is motivated by Laplacian adjusted Wasserstein gradient descent (LAWGD) \citep{chewi2020svgd}, which establishes strong population convergence using the target Langevin spectrum.  Our construction combines its target adaptation with Bessel smoothing, an exact positive matrix Stein lift, and a discrepancy finite on atoms.  Spectral implementation also has direct predecessors in diffusion-map particle systems \citep{li2023diffusion} and Koopman spectral methods \citep{xu2026koopman}; these methods motivate the spectral approximation questions addressed in \Cref{sec:finite-truncation}.  The operator-geometric perspective of \citet{sakthivadivel2026two} supplies complementary context for prescribed response operators.

The MMD interpretation and its energy-dissipation identity belong to the framework of \citet{arbel2019mmd}; matrix-valued Stein directions were developed by \citet{wang2019matrix}.  KSD descent \citep{korba2021ksd} is another Wasserstein descent of a finite empirical discrepancy.  The two scalar discrepancies in our construction have different roles: the squared Green--Bessel MMD has spectral weights $q_r(\lambda)=[\lambda(1+\lambda)^r]^{-1}$, whereas the squared KSD of the matrix kernel has weights $\lambda q_r(\lambda)=(1+\lambda)^{-r}$ and equals the population entropy dissipation.  The spectral-gap comparison is proved in \Cref{sec:energy-population}.

Regularized MMD flows provide close convergence comparisons.  The DrMMD analysis of \citet{chen2025drmmd} includes both population convergence and a finite-particle terminal-time $W_2$ estimate (Theorem~6.1), with particle and target-sample requirements depending on the chosen horizon and with adaptive regularization.  Sobolev-regularized MMD \citep{tian2026sobolev} uses current-measure-dependent regularization and gives population MMD decay up to a source-dependent residual; its population time-discretization theorem does not establish convergence of the particle implementation.  For Sobolev kernels on the torus, \citet{chizat2026kernel} prove quantitative population convergence, with local smallness and Sobolev assumptions in the smooth regime $s>1$.  Their multiplier $\lambda^{-s}$ has the same high-frequency order as ours when $s=r+1$.  For energy kernels, \citet{rosenzweig2026energy} prove finite-horizon modulated-energy mean-field estimates and, in Corollary~4.9, all-time consistency for initial particles already approaching the target.  Our compact theorem permits a fixed smooth initial law away from the target.

The finite-time mean-field theory \citep{lu2019scaling,korba2020nonasymptotic}, geometric analysis \citep{duncan2023geometry}, and variational many-particle and long-time limits \citep{nusken2023many} describe complementary aspects of SVGD.  Quantitative finite-particle results include averaged or best-iterate KSD bounds \citep{shi2023finite} and the joint-law entropy approach of \citet{banerjee2026improved}.  Regularized SVGD interpolates between Stein and Wasserstein geometries \citep{he2025regularized}; its finite-particle analysis \citep{he2026finite} gives Fisher and Wasserstein conclusions for specified annealed or time-averaged objects, which differ from the random last iterates studied here.  The companion Riesz particle analysis \citep{teolis2026riesz} uses renormalized entropy to control time-averaged empirical laws and invariant laws.

Population convergence itself has several distinct mechanisms.  The Stein--log-Sobolev inequality of \citet{carrillo2024stein} gives exponential convergence for target-weighted kernels with a translation-invariant convolution factor.  For periodic Riesz kernels, \citet{chizat2026stein} prove local quantitative population convergence in strong norms under entropy smallness and Sobolev control.  Our population estimate uses an ordinary Poincar\'e gap and a weak discrepancy finite on empirical measures.

Full empirical-Wasserstein control in physical time already exists for Gaussian target and Gaussian initialization with bilinear kernels: \citet[Theorem~3.7]{liu2023gaussian} place the supremum outside expectation.  \citet[Corollary~5.9]{balasubramanian2026uniform} obtain $\E\sup_{t\ge0}W_2^2(\mu_t^N,\rho_t)\le Cr_d(N)$ in that setting.  Their broader distributional results concern time averages, while their non-Gaussian initialization and conjugacy theory controls closed Stein features.  Our theorem treats smooth targets on compact manifolds using an infinite-rank kernel and a fixed smooth initial density.

Added Langevin noise gives another route.  \citet{priser2025noisy} prove non-averaged long-time consistency for noisy discrete-time SVGD under a log-Sobolev assumption, with an iterated particle/time limit.  \citet[Theorems~6.3--6.4]{banerjee2026langevin} establish quantitative physical-time empirical KSD and $W_2$ propagation of chaos for fixed positive Langevin noise, under their target log-Sobolev, regularity, dissipativity and initialization assumptions; their bounds have the form $\sup_t\E d(\mu_t^N,\rho_t)$.  Our compact result uses deterministic dynamics conditional on initialization and the ordering $\E\sup_t$; the different assumptions and kernels preclude a direct rate comparison.

\begin{table}[t]
\centering
\small
\caption{Selected particle comparisons.  Physical time refers to the empirical measure at time $t$.  Each statement is subject to the hypotheses of the cited theorem.}
\label{tab:comparison}
\begin{tabular}{@{}L{0.23\textwidth}L{0.28\textwidth}L{0.41\textwidth}@{}}
\toprule
Work & Dynamics and initialization & Particle conclusion \tabularnewline
\midrule
\citet{liu2023gaussian} & Bilinear Gaussian SVGD; Gaussian initialization & Full physical-time $W_2$ control; $\sup_t\E$ \tabularnewline
\citet{balasubramanian2026uniform} & General classes; Gaussian bilinear special case & Broad metrics for time averages; Gaussian full $W_2$ with $\E\sup_t$ \tabularnewline
\citet{banerjee2026langevin} & Fixed positive Langevin noise & Physical-time empirical KSD and $W_2$; $\sup_t\E$ \tabularnewline
\citet{chen2025drmmd} & Adaptive DrMMD; target samples & Terminal-time $W_2$; sample requirements depend on horizon \tabularnewline
\citet{rosenzweig2026energy} & Energy-kernel flow; target-prepared data for all-time result & Fixed-horizon mean-field control; all-time target consistency for prepared data \tabularnewline
Present paper & Deterministic Green--Bessel flow; fixed smooth initial law on a compact manifold & Full physical-time MMD and $W_1$ control; $\E\sup_t$; exact-mode approximation \tabularnewline
\bottomrule
\end{tabular}
\end{table}

\subsection{Organization of the proof}

\Cref{sec:framework} states the compact and exact-mode results, followed by the all-time principle they use.  The construction in \Cref{sec:kernel-construction} produces a Stein velocity with the common energy; \Cref{sec:energy-population} then supplies population attraction, and \Cref{sec:relative-stability} proves the all-time implication.  \Cref{sec:compact-verification} verifies its compact-state inputs, while \Cref{sec:finite-truncation} controls the omitted spectral modes.  \Cref{sec:euclidean-targets,sec:gaussian-mixtures} develop the noncompact extension.  The appendices contain the measure-valued chain rules, spectral estimates, coupling and cutoff arguments, and the transfer to labelled marginals.

\section{Framework and main results}
\label{sec:framework}

We define the target-adapted dynamics and state the compact theorem and its exact spectral approximation.  We then formulate the all-time principle used in their proofs.  The conditional Euclidean extension is stated with its additional hypotheses in \Cref{sec:euclidean-targets}.

\subsection{State space, target, and weighted operators}

The state space $\X$ is either a compact connected smooth Riemannian manifold without boundary or $\R^d$.  Gradients, divergences, and distances are understood with respect to the Riemannian structure; in the Euclidean case they have their usual meaning.  The target probability measure is
\begin{equation}
\label{eq:target}
\pi(\dd x)=Z^{-1}e^{-V(x)}\,\dd x,
\qquad
Z=\int_{\X}e^{-V(x)}\,\dd x<\infty.
\end{equation}
On a manifold, $\dd x$ denotes Riemannian volume.

For a smooth vector field $F$, define the weighted divergence
\begin{equation}
\label{eq:weighted-div}
\divpi F
:=\pi^{-1}\nabla\cdot(\pi F)
=\nabla\cdot F+F\cdot\nabla\log\pi.
\end{equation}
Its defining integration-by-parts identity is
\begin{equation}
\label{eq:weighted-ibp}
\int_{\X}g\,\divpi F\,\dd\pi
=-\int_{\X}\nabla g\cdot F\,\dd\pi.
\end{equation}
The nonnegative Langevin generator is
\begin{equation}
\label{eq:A-def}
\A_\pi=-\divpi\nabla.
\end{equation}
It is understood as the self-adjoint operator associated with the closed Dirichlet form
\[
\mathcal E_\pi(f,g)=\int_{\X}\nabla f\cdot\nabla g\,\dd\pi.
\]
Constants lie in the null space.  We therefore work on
\[
L^2_0(\pi)=\left\{f\in L^2(\pi):\int f\,\dd\pi=0\right\}.
\]
The target has Poincar\'e gap $\lambda_1>0$ if
\begin{equation}
\label{eq:poincare}
\lambda_1\norm{f}_{L^2(\pi)}^2
\le \int\abs{\nabla f}^2\,\dd\pi,
\qquad f\in\Dom(\mathcal E_\pi)\cap L^2_0(\pi).
\end{equation}
Equivalently, $\A_\pi\ge\lambda_1\Id$ on $L^2_0(\pi)$.

Fix an integer $r\ge1$.  The Green--Bessel operator is
\begin{equation}
\label{eq:Q-def}
\Q_{r,\pi}
:=\A_\pi^{-1}(\Id+\A_\pi)^{-r}
\quad\text{on }L^2_0(\pi).
\end{equation}
When $\A_\pi$ has compact resolvent, choose an orthonormal eigenbasis
\begin{equation}
\label{eq:eigenpairs}
\A_\pi\varphi_n=\lambda_n\varphi_n,
\qquad
0=\lambda_0<\lambda_1\le\lambda_2\le\cdots,
\qquad
\varphi_0=1,
\end{equation}
and set
\begin{equation}
\label{eq:q-def}
q_r(\lambda)=\frac1{\lambda(1+\lambda)^r},
\qquad \lambda>0.
\end{equation}
Then
\begin{equation}
\label{eq:G-def}
\G_{r,\pi}(x,y)
=\sum_{n\ge1}q_r(\lambda_n)\varphi_n(x)\varphi_n(y)
\end{equation}
is the scalar kernel of $\Q_{r,\pi}$ whenever the series has the asserted regularity.

For a finite signed measure $\sigma$ for which the kernel integral exists, define its Green--Bessel potential by
\begin{equation}
\label{eq:Phi-def}
\PhiPot_\sigma(x)
:=\int_{\X}\G_{r,\pi}(x,y)\,\sigma(\dd y).
\end{equation}
If $\sigma=u\pi$ with $u\in L^2_0(\pi)$, then
\begin{equation}
\label{eq:compatibility}
\PhiPot_{u\pi}=\Q_{r,\pi}u.
\end{equation}
The kernel is centered in each variable, so $\PhiPot_\pi=0$ and $\PhiPot_\mu=\PhiPot_{\mu-\pi}$ for every probability measure in this domain.  Thus \eqref{eq:Phi-def} defines the potential on measures, while \eqref{eq:compatibility} relates it to the operator on densities.

\subsection{The Green--Bessel discrepancy and the SVGD dynamics}

Define the centered Hilbert space $\mathcal H_{r,\pi}=\Dom(\Q_{r,\pi}^{-1/2})$, with norm
\[
\norm{h}_{\mathcal H_{r,\pi}}=\norm{\Q_{r,\pi}^{-1/2}h}_{L^2(\pi)}.
\]
The kernel energy is finite for empirical measures after smoothing.  To discuss moment control on $\R^d$ before any integrability of the unknown measure is known, it is useful to define the same discrepancy through bounded test functions.  For a centered finite signed measure $\sigma$, set
\begin{equation}
\label{eq:D-def}
\D_{r,\pi}(\sigma)
=\sup\left\{|\sigma(\phi)|:\phi\in\mathcal T,
\ \norm{\phi-\pi(\phi)}_{\mathcal H_{r,\pi}}\le1\right\},
\end{equation}
where $\mathcal T=C^\infty(\X)$ on a compact manifold and $\mathcal T=C_c^\infty(\R^d)$ in the Euclidean case. The centered test functions are required to be dense in $\mathcal H_{r,\pi}$; this is verified for the target classes below. For probability measures, write $\D_{r,\pi}(\mu,\nu)=\D_{r,\pi}(\mu-\nu)$.  We write $\Pcal_1(\X)$ for the probability measures with finite first moment, the domain of $W_1$.

When point evaluation is continuous on $\mathcal H_{r,\pi}$ and
$\int \G_{r,\pi}(x,x)^{1/2}\,|\sigma|(\dd x)<\infty$, the Riesz representation theorem gives
\begin{equation}
\label{eq:D-spectral}
\D_{r,\pi}(\sigma)^2
=\iint\G_{r,\pi}(x,y)\,\sigma(\dd x)\sigma(\dd y)
=\sum_{n\ge1}q_r(\lambda_n)|\sigma(\varphi_n)|^2.
\end{equation}
All signed measures appearing in the compact dynamics satisfy this integrability condition. In the Euclidean statements it is imposed or verified explicitly. The dual definition \eqref{eq:D-def} also applies to probabilities for which no initial moment or spectral expansion has been assumed. It is jointly lower semicontinuous in $(\mu,\nu)$ under weak convergence, since it is a supremum of continuous test-function actions.

If $\mu=f\pi$ and $u=f-1\in L^2_0(\pi)$, then
\begin{equation}
\label{eq:D-density}
\D_{r,\pi}(\mu,\pi)^2=\ip{u}{\Q_{r,\pi}u}_{L^2(\pi)}.
\end{equation}
The discrepancy is the maximum mean discrepancy of $\G_{r,\pi}$ on its kernel-integrable domain \citep{sriperumbudur2010hilbert}.

The matrix-valued kernel used by SVGD is
\begin{equation}
\label{eq:K-def}
\K_{r,\pi}(x,y)
=
\sum_{n\ge1}
\frac{q_r(\lambda_n)}{\lambda_n}
\nabla\varphi_n(x)\otimes\nabla\varphi_n(y).
\end{equation}
On a manifold, $\K_{r,\pi}(x,y)$ maps $T_y\X$ to $T_x\X$.  The source-variable weighted divergence is taken row-wise, or covariantly in the manifold setting.  The construction in \Cref{sec:kernel-construction} proves
\begin{equation}
\label{eq:K-intertwine-framework}
\operatorname{div}_{\pi,y}\K_{r,\pi}(x,y)
=-\nabla_x\G_{r,\pi}(x,y).
\end{equation}
Hence the matrix-kernel SVGD velocity is
\begin{equation}
\label{eq:v-mu}
 v_\mu(x)
=\int_{\X}\operatorname{div}_{\pi,y}\K_{r,\pi}(x,y)\,\mu(\dd y)
=-\nabla\PhiPot_{\mu-\pi}(x).
\end{equation}
The centering of $\G_{r,\pi}$ against $\pi$ gives $\PhiPot_\pi=0$.

The particle system is
\begin{equation}
\label{eq:particle-system}
\dot X_i^N(t)
=-\frac1N\sum_{j=1}^N
\nabla_x\G_{r,\pi}(X_i^N(t),X_j^N(t)),
\qquad i=1,\ldots,N,
\end{equation}
with empirical measure
\begin{equation}
\label{eq:empirical}
\mu_t^N=\frac1N\sum_{i=1}^N\delta_{X_i^N(t)}.
\end{equation}
The population equation is
\begin{equation}
\label{eq:population}
\partial_t\rho_t+\nabla\cdot(\rho_t v_{\rho_t})=0,
\qquad
v_{\rho_t}=-\nabla\PhiPot_{\rho_t-\pi}.
\end{equation}
Both equations are instances of the same nonlinear continuity equation, with an atomic or absolutely continuous initial law.

\subsection{Quantitative compact theorem}

The main result concerns a smooth positive target on a compact connected manifold and independent particles drawn from a fixed smooth density.  Compactness makes the force bounds global; connectedness gives a positive target gap.  The smoothing threshold below makes point masses have finite energy and gives the derivative bounds needed for the particle comparison.  The initial density may vanish and need not be close to the target.

\begin{theorem}[Compact Green--Bessel SVGD]
\label{thm:compact-main}
Let $\X$ be a compact connected smooth Riemannian manifold without boundary, let $\pi$ have a smooth strictly positive density, and let $r\ge r_\star(d)$, where $r_\star(d)$ is the nonoptimal regularity threshold specified in \Cref{prop:compact-regularity}.  Let $\rho_0=f_0\pi$ with
\begin{equation}
\label{eq:compact-initial}
 f_0\in C^\infty(\X),
\qquad
0\le f_0\le M_0,
\qquad
\int f_0\,\dd\pi=1.
\end{equation}
Initialize \eqref{eq:particle-system} independently with common law $\rho_0$, and let $\rho_t$ solve \eqref{eq:population} with $\rho_{t=0}=\rho_0$.  The particle system and the classical population flow are globally well posed.  There is a constant $C$, depending on $(\X,\pi,r,f_0)$ but not on $N$ or time, such that
\begin{align}
\label{eq:compact-D-rate}
\E\sup_{t\ge0}\D_{r,\pi}(\mu_t^N,\rho_t)
&\le \frac{C}{\sqrt{\log(2+N)}},\\
\label{eq:compact-W-rate}
\E\sup_{t\ge0}\Wone(\mu_t^N,\rho_t)
&\le C[\log(2+N)]^{-1/[2(r+2)]}.
\end{align}
The population obeys
\begin{equation}
\label{eq:compact-pop-W-rate}
\Wone(\rho_t,\pi)
\le C(1+t)^{-1/[2(r+2)]}.
\end{equation}
Therefore, for every sequence $t_N\to\infty$,
\begin{equation}
\label{eq:compact-last-rate}
\E\Wone(\mu_{t_N}^N,\pi)
\le
C\left(
[\log(2+N)]^{-1/2}+t_N^{-1/2}
\right)^{1/(r+2)}.
\end{equation}
For every fixed $\ell\in\N$, the same last-iterate conclusion holds for the $\ell$-particle labelled marginal in $W_1$ for the additive product metric, with the $O(\ell^2/N)$ sampling-without-replacement correction given in \Cref{prop:labelled-comparison}.
\end{theorem}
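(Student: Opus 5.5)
The proof follows the three-step scheme sketched in the introduction: a common energy, population decay, and a finite-time comparison, stitched together by \eqref{eq:intro-stitching}.

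\textbf{Step 1: well-posedness and regularity.} For $r\ge r_\star(d)$, \Cref{prop:compact-regularity} gives $\G_{r,\pi}\in C^k(\X\times\X)$ with enough derivatives. Concretely, $\sum_n q_r(\lambda_n)\|\varphi_n\|_{C^k}^2<\infty$ by Weyl's law and Sobolev embedding.
\begin{itemize}
\item The particle system \eqref{eq:particle-system} then has a globally Lipschitz, bounded right-hand side on the compact manifold, so it is globally well posed.
\item The population flow is transport by a $C^{k-1}$ field, and its velocity depends Lipschitz-continuously on $\rho$ in a weak norm. A fixed point on $C([0,T];\Pcal(\X))$ gives a classical solution $\rho_t=f_t\pi$ with $f_t$ smooth, and $T$ is arbitrary.
\end{itemize}

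\textbf{Step 2: energy dissipation and population decay.} Differentiate $E(\mu)=\D_{r,\pi}(\mu,\pi)^2=\iint\G\,(\mu-\pi)^{\otimes2}$ along \eqref{eq:v-mu}. This gives
\[
\tfrac{\dd}{\dd t}E(\mu_t)=-2\int|\nabla\PhiPot_{\mu_t-\pi}|^2\dd\mu_t\le0 ,
\]
valid for atomic and smooth $\mu$ alike.

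For the population, differentiate $\KL(\rho_t\mid\pi)$. By \eqref{eq:K-intertwine-framework} and \eqref{eq:weighted-ibp}, the entropy production equals the squared KSD $\ip{u_t}{\A_\pi\Q_{r,\pi}u_t}$ with $u_t=f_t-1$, that is, the spectral weight $(1+\lambda)^{-r}$. Since $\lambda q_r(\lambda)\ge\lambda_1 q_r(\lambda)$ for $\lambda\ge\lambda_1$, the Poincar\'e gap gives
\[
\ip{u}{\A_\pi\Q u}\ge\lambda_1 E(\rho).
\]
Integrating, $\lambda_1\int_0^T E(\rho_s)\dd s\le\KL(\rho_0\mid\pi)\le\log M_0<\infty$. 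Monotonicity of $E$ then yields $E(\rho_T)\le C/T$, so $\D_{r,\pi}(\rho_T,\pi)\le C T^{-1/2}$.

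\textbf{Step 3: finite-time comparison.} Couple the particles with i.i.d.\ nonlinear characteristics $\bar X_i(t)=\Psi_t(X_i(0))$ driven by $v_{\rho_t}$. A Gr\"onwall argument uses the bounded $C^2$ norm of $\G$ and the estimate
\[
\big|\tfrac1N\textstyle\sum_j\nabla\G(x,\bar X_j)-\int\nabla\G(x,y)\rho_t(\dd y)\big| .
\]
This last term has mean square $O(1/N)$ uniformly in $x$: take a sup over $x$ via an $\epsilon$-net, or bound it by the $\mathcal H$-norm of the feature map. The result is
\[
\E\max_i\sup_{t\le T}d(X_i,\bar X_i)\le Ce^{CT}/\sqrt N .
\]
Then $\D_{r,\pi}(\mu^N_T,\rho_T)$ is bounded by $\D(\mu^N_T,\bar\mu^N_T)+\D(\bar\mu^N_T,\rho_T)$. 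The first term is at most $C\max_i d$, since the kernel feature map is Lipschitz into $\mathcal H$. The second has expectation at most $(\E\,\G(x,x)\text{-variance}/N)^{1/2}\le C/\sqrt N$.

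\textbf{Step 4: stitching and rates.} For $t\le T$, use the Step 3 bound directly. For $t\ge T$, use \eqref{eq:intro-stitching}, which relies only on the triangle inequality and on $\D(\mu^N_t,\pi)\le\D(\mu^N_T,\pi)$ and $\D(\rho_t,\pi)\le\D(\rho_T,\pi)$. Together these give
\[
\E\sup_t\D\le Ce^{CT}/\sqrt N+CT^{-1/2}.
\]
Choosing $T=(4C)^{-1}\log(2+N)$ gives \eqref{eq:compact-D-rate}.

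For $W_1$, interpolate. A $1$-Lipschitz $\phi$ is approximated by $\phi_\Lambda$, its spectral projection or a heat-smoothed version, with $\|\phi-\phi_\Lambda\|_\infty\le C\Lambda^{-1/2}$ and $\|\phi_\Lambda\|_{\mathcal H}\le C\Lambda^{(r+1)/2}$. Optimizing in $\Lambda$ gives $W_1\le C\D^{1/(r+2)}$, and Jensen moves this inside the expectation. This yields \eqref{eq:compact-W-rate} and \eqref{eq:compact-pop-W-rate}, and \eqref{eq:compact-last-rate} follows by the triangle inequality.

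The labelled marginals follow from exchangeability together with \Cref{prop:labelled-comparison}.

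The main obstacle is Step 3's uniform-in-$x$ law-of-large-numbers bound inside Gr\"onwall, together with keeping the constant of the form $e^{CT}$ with $C$ independent of $N$. I would handle it through the RKHS feature representation $\nabla_x\G(x,y)=\langle\nabla_x k_x,k_y\rangle$, which reduces the sup over $x$ to a Hilbert-norm variance bound.
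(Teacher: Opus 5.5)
\textbf{Gap: the time supremum of the sampling error on $[0,T]$.} The theorem puts the supremum inside the expectation. So on $[0,T]$ you need $\E\sup_{0\le t\le T}\D_{r,\pi}(\mu_t^N,\rho_t)$, but in Step 4 you say only ``for $t\le T$, use the Step 3 bound directly.'' That is not enough.

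Your coupling term is fine uniformly in time, because you control $\E\max_i\sup_{t\le T}d(X_i,\bar X_i)$. Your sampling term $\D_{r,\pi}(\bar\mu_t^N,\rho_t)$, however, is bounded only at a fixed time. That gives $\sup_{t\le T}\E$, not $\E\sup_{t\le T}$, and the fixed-time variance of an i.i.d. empirical process does not control its time supremum. Stitching cannot replace this step, because $\D_{r,\pi}(\rho_s,\pi)$ is not small for small $s$.

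The paper closes the gap with an $H^1$-in-time argument. Set $Z_N(t)=\frac1N\sum_i[\Psi(\bar X_i(t))-\E\Psi(\bar X_i(t))]$. It is $C^1$ in $\ell^2$, and its derivative averages the i.i.d. bounded variables $D\Psi(\bar X_i)v_{\rho_t}(\bar X_i)$ minus their mean. Hence $\E\norm{\dot Z_N(t)}_{\ell^2}^2\le C/N$, and
\[
\sup_{t\le T}\norm{Z_N(t)}_{\ell^2}^2\le 2\norm{Z_N(0)}_{\ell^2}^2+2T\int_0^T\norm{\dot Z_N(s)}_{\ell^2}^2\,\dd s
\]
yields $\E\sup_{t\le T}\D_{r,\pi}(\bar\mu_t^N,\rho_t)^2\le C(1+T^2)/N$. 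A time grid combined with the deterministic Lipschitz-in-time bound and a union bound would also work. It gives a worse power of $N$, which is harmless once $T=c\log(2+N)$. With either fix, your bound $\E\sup_t\D\le Ce^{CT}/\sqrt N+CT^{-1/2}$ holds and the rest goes through.

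\textbf{What is sound, and how it differs from the paper.} Your overall architecture matches the paper's: common energy, entropy with the Poincar\'e gap, coupling to independent characteristics, stitching at $T\asymp\log N$, heat interpolation to $W_1$, and exchangeability for labelled marginals. Your coupling step is a legitimate variant. You bound the force fluctuation uniformly in $x$ by $\sup_x\norm{\nabla_x\G_{r,\pi}(x,\cdot)}_{\mathcal H}\,\D_{r,\pi}(\bar\mu_t^N,\rho_t)$, where the supremum is finite by \eqref{eq:compact-feature-sum-v}, and run Gr\"onwall on $\max_i d(X_i,\bar X_i)$. This is valid because Gr\"onwall only needs $\int_0^T\D_{r,\pi}(\bar\mu_s^N,\rho_s)\,\dd s$, which the fixed-time variance and Fubini control. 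The paper instead evaluates the fluctuation only at $\bar z_i$, uses conditional independence of the $j\ne i$ terms, and tracks an averaged squared error. Your version gives the slightly stronger per-particle bound.

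\textbf{Smaller points.} A sharp spectral projection does not in general satisfy $\norm{\phi-\phi_\Lambda}_{L^\infty}\le C\Lambda^{-1/2}$ for Lipschitz $\phi$, since its $L^\infty$ operator norm grows with $\Lambda$. Use the heat-smoothed version, as the paper does, or a smooth multiplier. Also, $f_0$ may vanish, so differentiating $\KL(\rho_t\mid\pi)$ needs a word of justification, such as the paper's $\beta_\eps$ regularization.
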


\begin{remark}[Why the rate is logarithmic]
The squared finite-time comparison is bounded by $Ce^{CT}/N$, while the population discrepancy is $O(T^{-1/2})$. Choosing $T$ proportional to $\log N$, with a sufficiently small coefficient, yields \eqref{eq:compact-D-rate}.  This nonoptimal rate controls every physical time in a discrepancy that determines the full empirical measure.
\end{remark}

\subsection{Finite spectral approximation}

A finite implementation retains only part of the target spectrum.  Exact truncation preserves the Stein identity and dissipation laws; the price is a spatial-resolution error from the omitted modes.  The next theorem quantifies that error with constants uniform in the cutoff.

Let $\chi\in C_c^\infty([0,\infty))$ satisfy $0\le\chi\le1$, $\chi\equiv1$ on $[0,1]$, and $\chi\equiv0$ on $[2,\infty)$.  For $\Lambda\ge\lambda_1$, define
\begin{equation}
\label{eq:qLambda}
q_{r,\Lambda}(\lambda)
=q_r(\lambda)\chi(\lambda/\Lambda),
\end{equation}
with associated finite-rank scalar and matrix kernels $\G_{r,\pi}^{\Lambda}$ and $\K_{r,\pi}^{\Lambda}$.  Let $\mu_t^{N,\Lambda}$ and $\rho_t^{\Lambda}$ denote their particle and population flows, and let $\D_{r,\pi}^{\Lambda}$ be the corresponding finite-mode discrepancy.

\begin{theorem}[Exact finite-mode approximation]
\label{thm:finite-mode-main}
Under the assumptions of \Cref{thm:compact-main}, there is $C<\infty$, independent of $N$, $t$, and $\Lambda\ge\lambda_1$, such that
\begin{equation}
\label{eq:finite-mode-alltime-D}
\E\sup_{t\ge0}
\D_{r,\pi}^{\Lambda}(\mu_t^{N,\Lambda},\rho_t^{\Lambda})
\le C[\log(2+N)]^{-1/2}.
\end{equation}
For every sequence $t_N\to\infty$ and every choice $\Lambda_N\ge\lambda_1$,
\begin{equation}
\label{eq:finite-mode-last-bound}
\E\Wone(\mu_{t_N}^{N,\Lambda_N},\pi)
\le C\left[
\Lambda_N^{(r+1)/2}
\left(
[\log(2+N)]^{-1/2}+t_N^{-1/2}
\right)
+\Lambda_N^{-1/2}
\right].
\end{equation}
In particular, with
\begin{equation}
\label{eq:optimal-Lambda}
a_N=[\log(2+N)]^{-1/2}+t_N^{-1/2},
\qquad
\Lambda_N\asymp a_N^{-2/(r+2)},
\end{equation}
one has
\begin{equation}
\label{eq:finite-mode-optimized}
\E\Wone(\mu_{t_N}^{N,\Lambda_N},\pi)
\le C a_N^{1/(r+2)}.
\end{equation}
If $L_N$ is the number of retained modes, Weyl counting gives
\begin{equation}
\label{eq:mode-count}
L_N\lesssim \Lambda_N^{d/2}
\lesssim a_N^{-d/(r+2)}.
\end{equation}
After the values $\varphi_n(X_i)$ and $\nabla\varphi_n(X_i)$ are available, one evaluation of the finite-mode velocity costs $O(NL_Nd)$ arithmetic operations.
\end{theorem}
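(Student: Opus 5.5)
The plan is to rerun the proof of \Cref{thm:compact-main} for the truncated multiplier $q_{r,\Lambda}$, tracking that every constant is uniform in $\Lambda\ge\lambda_1$, and then to add a separate estimate for the omitted modes. The truncated kernels are finite sums of exact eigenmodes, so the intertwining identity \eqref{eq:K-intertwine-framework} holds mode by mode. It therefore holds for $\K^\Lambda_{r,\pi}$ and $\G^\Lambda_{r,\pi}$, and both flows dissipate $\D^\Lambda_{r,\pi}(\cdot,\pi)^2$.

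\textbf{Uniformity in $\Lambda$.} Since $0\le\chi\le1$, we have $q_{r,\Lambda}\le q_r$. The derivative bounds on $\G^\Lambda$ used in the particle comparison come from spectral sums $\sum q_{r,\Lambda}(\lambda_n)\|\nabla^k\varphi_n\|_\infty^2$. These are dominated by the untruncated sums, so the comparison constant $Ce^{CT}/\sqrt N$ is uniform in $\Lambda$. The population decay rests on two facts.
\begin{itemize}
\item The entropy dissipation equals the truncated KSD, with weights $\lambda q_{r,\Lambda}$.
\item The spectral-gap comparison gives $q_{r,\Lambda}\le\lambda_1^{-1}\lambda q_{r,\Lambda}$, pointwise in $\lambda$.
\end{itemize}
Together these give $\D^\Lambda(\rho^\Lambda_T,\pi)\le CT^{-1/2}$, with $C$ controlled by $\KL(\rho_0\mid\pi)$ only. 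The classical population regularity for the truncated flow is easier than before: the velocity is a finite combination of smooth modes, with coefficients bounded uniformly by the same sums. The stitching inequality \eqref{eq:intro-stitching} with $T\propto\log N$ then yields \eqref{eq:finite-mode-alltime-D} verbatim.

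\textbf{Passing to $W_1$.} For the last-iterate bound, use Kantorovich duality and test against a $1$-Lipschitz $\phi$ with $\pi(\phi)=0$. Split $\phi$ spectrally into $P_{\le\Lambda}\phi$, the modes with $\lambda_n\le\Lambda$ where $\chi=1$, and a high part $P_{>\Lambda}\phi$.
\begin{itemize}
\item \emph{Low part.} We have $|\sigma(P_{\le\Lambda}\phi)|\le\D^\Lambda(\sigma)\,\|P_{\le\Lambda}\phi\|_{\mathcal H}$, and
\[
\|P_{\le\Lambda}\phi\|_{\mathcal H}^2=\sum_{\lambda_n\le\Lambda}\lambda_n(1+\lambda_n)^r|\hat\phi_n|^2\lesssim\Lambda^{r+1}\sum\lambda_n|\hat\phi_n|^2\le\Lambda^{r+1}.
\]
So this part costs $\Lambda^{(r+1)/2}\D^\Lambda$.
\item \emph{High part.} This gives the resolution error. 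We need $\sup_x|P_{>\Lambda}\phi(x)|\lesssim\Lambda^{-1/2}$ uniformly over $1$-Lipschitz $\phi$, or at least when integrated against probability measures. Hard spectral projectors are not bounded in $L^\infty$. I would therefore replace $P_{\le\Lambda}$ by a smooth functional-calculus cutoff $\psi(\A_\pi/\Lambda)$ with $\psi$ supported in $[0,1]$. Heat-kernel and Schwartz-function estimates give $\|(\Id-\psi(\A_\pi/\Lambda))\phi\|_\infty\le C\Lambda^{-1/2}\Lip(\phi)$, the standard Jackson-type bound on compact manifolds. The low-frequency test function $\psi(\A_\pi/\Lambda)\phi$ still lies in the span where $\chi=1$, with $\mathcal H$-norm at most $\Lambda^{(r+1)/2}$.
\end{itemize}
Applying this to $\sigma=\mu^{N,\Lambda}_{t_N}-\pi$, and splitting $\D^\Lambda(\mu,\pi)\le\D^\Lambda(\mu,\rho^\Lambda)+\D^\Lambda(\rho^\Lambda,\pi)$, gives \eqref{eq:finite-mode-last-bound}. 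Balancing $\Lambda^{(r+1)/2}a_N=\Lambda^{-1/2}$ gives \eqref{eq:finite-mode-optimized}. The mode count \eqref{eq:mode-count} is Weyl's law for $\A_\pi$, a second-order elliptic operator with smooth coefficients. The cost statement follows from writing the velocity as $-\sum_n q_{r,\Lambda}(\lambda_n)\nabla\varphi_n(X_i)\cdot\frac1N\sum_j\varphi_n(X_j)$: first compute the $L$ empirical moments in $O(NL)$, then the gradients in $O(NLd)$.

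The main obstacle I expect is the uniform Jackson estimate $\Lambda^{-1/2}$ for the high-frequency remainder of Lipschitz functions, together with making sure the smooth cutoff $\psi$ sits inside the region where $\chi\equiv1$. I would prove it via the representation $\Id-\psi(\A_\pi/\Lambda)=\int_0^\infty(\ldots)$ through heat-semigroup estimates, $\|e^{-s\A_\pi}\phi-\phi\|_\infty\le C\sqrt s\,\Lip(\phi)$, taking $s\sim\Lambda^{-1}$.
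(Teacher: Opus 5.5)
Your proposal is correct and follows the paper's proof essentially step for step. The shared steps are: cutoff-uniform interaction and feature bounds from $0\le\chi\le1$, the truncated entropy identity combined with the gap comparison, stitching at $T\asymp\log N$, and a smooth multiplier $\psi(\A_\pi/\Lambda)$ equal to one near zero and supported where $\chi\equiv1$ (the paper's $\theta$ in \Cref{prop:finite-resolution}). Balancing, Weyl counting and the factorized cost count then finish the argument as in the paper.

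The one step to repair is the route you sketch for the Jackson bound $\norm{(\Id-\psi(\A_\pi/\Lambda))\phi}_{L^\infty}\le C\Lambda^{-1/2}\Lip(\phi)$. A compactly supported $\psi$ is not a real-time superposition $\int_0^\infty e^{-su}w(s)\,\dd s$ of heat multipliers, because such Laplace transforms are analytic in $\operatorname{Re}u>0$. The estimate $\norm{P_s\phi-\phi}_{L^\infty}\le C\sqrt s$ therefore does not transfer to $\psi(\A_\pi/\Lambda)$ unless you localize the kernel $K_\Lambda$ of $\psi(\A_\pi/\Lambda)$ at scale $\Lambda^{-1/2}$. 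Concretely, you need $\int K_\Lambda(x,y)\,\pi(\dd y)=1$ and $\sup_x\int d_\X(x,y)\abs{K_\Lambda(x,y)}\,\pi(\dd y)\le C\Lambda^{-1/2}$.

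That localization is exactly what the paper imports from Coulhon--Kerkyacharian--Petrushev (Theorem~3.4). It follows from Gaussian heat-kernel bounds via finite propagation speed or complex-time heat kernels, but not from real-time heat smoothing alone. The natural heat-only substitute $\Pi_{\le\Lambda}P_s\phi$ (a hard spectral projection of the heat-smoothed function, controlled by the local Weyl law) forces $s\Lambda\gtrsim\log\Lambda$ and so loses a factor $\sqrt{\log\Lambda}$.

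Incidentally, your low-frequency estimate is sharper than you state. On the retained modes $(1+\lambda_n)^r\le(1+\Lambda)^r$, and $\sum_n\lambda_n\abs{\hat\phi_n}^2=\int\abs{\nabla\phi}^2\,\dd\pi\le1$. Together these give $C\Lambda^{r/2}\D_{r,\pi}^\Lambda$ rather than the paper's $C\Lambda^{(r+1)/2}\D_{r,\pi}^\Lambda$, which comes from $q_r^{-1}\le C\Lambda^{r+1}$ and $\norm{\phi}_{L^2(\pi)}$. This would improve the exponent in \eqref{eq:finite-mode-optimized} to $1/(r+1)$.
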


\begin{remark}[Exact modes versus numerical modes]
\Cref{thm:finite-mode-main} truncates exact eigenpairs of $\A_\pi$.  Approximate eigenpairs introduce a generator residual and hence a residual force, computed in \eqref{eq:approx-intertwining-residual}.  Such an error can accumulate over arbitrarily long times.  A numerical extension must therefore control generator residuals in a norm strong enough to evaluate the force on Dirac masses; pointwise kernel approximation alone is insufficient.
\end{remark}

\subsection{The all-time principle behind the compact theorem}

The compact proof combines population attraction, finite-time particle approximation, and monotonicity of a common target energy.  The following package records the intermediate facts needed for that argument and its extension to $W_1$.  Every item is established under the geometric and smoothing conditions of \Cref{thm:compact-main}; together they also specify what a noncompact application must verify.  The conditions concern a fixed initial law $\rho_0=f_0\pi$ and its i.i.d. particle initializations.

\begin{assumption}[Inputs for the all-time principle]
\label{ass:GB}
\leavevmode
\begin{enumerate}[label=\textup{(GB\arabic*)}]
\item \label{ass:GB-gap}
\emph{Population attraction.} The self-adjoint generator has compact resolvent, a smooth centered eigenbasis, and Poincar\'e gap $\lambda_1>0$.
\item \label{ass:GB-kernel}
\emph{Energy on measures.} The centered tests in \eqref{eq:D-def} are dense in $\mathcal H_{r,\pi}$. Point and first-derivative evaluations are continuous on this space. The scalar kernel is symmetric, locally $C^2$, and centered against $\pi$; its matrix lift and differentiated series satisfy \eqref{eq:K-intertwine-framework}. The measures used below satisfy the integrability condition for \eqref{eq:D-spectral}.
\item \label{ass:GB-pop}
\emph{Flow and dissipation.} The population and empirical flows exist globally and uniquely. The population has a bounded classical density ratio on every finite interval. The differentiations and integration by parts in the common-energy identity \eqref{eq:common-energy} and the entropy inequality \eqref{eq:entropy-controls-D} are valid along these flows.
\item \label{ass:GB-form}
\emph{Comparison on a finite interval.} For each fixed $T<\infty$, the particle--population comparison satisfies
\begin{equation}
\label{eq:abstract-form}
\E\sup_{0\le t\le T}\D_{r,\pi}(\mu_t^N,\rho_t)^2\le C_T/N,
\end{equation}
where $C_T$ is independent of $N$. This is a finite-horizon input, with no required control on its growth in $T$.
\item \label{ass:GB-compactness}
\emph{Passage to $W_1$.} The discrepancy separates probabilities, and its target-energy sublevels are relatively compact in $W_1$.
\item \label{ass:GB-moment}
\emph{Control of random energy sublevels.} For a fixed base point $x_\ast$, there is $C<\infty$ such that every probability measure of finite discrepancy satisfies
\begin{equation}
\label{eq:abstract-moment}
\int d(x,x_\ast)^2\,\mu(\dd x)\le C\bigl(1+\D_{r,\pi}(\mu,\pi)\bigr).
\end{equation}
This condition is automatic on compact $\X$.
\end{enumerate}
\end{assumption}

The gap turns population entropy dissipation into decay of the target discrepancy.  The finite-time estimate reaches a time when that discrepancy is small; common-energy monotonicity then controls the entire remaining trajectory.  Separation and compactness convert this control to $W_1$, while the moment bound allows an expectation over the random initial energy.  These roles are assembled in \Cref{sec:relative-stability}.

\begin{theorem}[From finite-time comparison to all-time convergence]
\label{thm:abstract-main}
Assume \Cref{ass:GB}.  Let $\rho_0=f_0\pi$, where $f_0$ is smooth, bounded, nonnegative, and normalized, and assume
\begin{equation}
\label{eq:initial-diagonal}
\int_{\X}\G_{r,\pi}(x,x)\,\rho_0(\dd x)<\infty.
\end{equation}
Initialize the particles independently with common law $\rho_0$.  Then the population satisfies
\begin{equation}
\label{eq:abstract-population-rate}
\D_{r,\pi}(\rho_t,\pi)^2
\le
\frac{\KL(\rho_0\mid\pi)}{\lambda_1t},
\qquad t>0.
\end{equation}
Moreover,
\begin{align}
\label{eq:abstract-alltime-D}
\lim_{N\to\infty}
\E\sup_{t\ge0}\D_{r,\pi}(\mu_t^N,\rho_t)&=0,\\
\label{eq:abstract-alltime-W1}
\lim_{N\to\infty}
\E\sup_{t\ge0}\Wone(\mu_t^N,\rho_t)&=0.
\end{align}
Consequently, for every deterministic sequence $t_N\to\infty$,
\begin{equation}
\label{eq:abstract-last}
\E\Wone(\mu_{t_N}^N,\pi)\longrightarrow0.
\end{equation}
For every fixed $\ell\in\N$, the labelled marginal satisfies
\begin{equation}
\label{eq:abstract-labelled}
\Law(X_1^N(t_N),\ldots,X_\ell^N(t_N))
\Longrightarrow \pi^{\otimes\ell}.
\end{equation}
\end{theorem}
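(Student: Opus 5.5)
The plan is to prove the three conclusions in order: the population rate \eqref{eq:abstract-population-rate}, the all-time discrepancy limit \eqref{eq:abstract-alltime-D}, and then the $W_1$, last-iterate and labelled-marginal consequences.

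\textbf{Population rate.} Under \ref{ass:GB-pop}, differentiating $\KL(\rho_t\mid\pi)$ along \eqref{eq:population} and integrating by parts with \eqref{eq:weighted-ibp} gives the dissipation $-\frac{d}{dt}\KL(\rho_t\mid\pi)=\ip{u_t}{\A_\pi\Q_{r,\pi}u_t}_{L^2(\pi)}$ with $u_t=f_t-1$. By the spectral calculus this equals $\sum_n\lambda_n q_r(\lambda_n)|\hat u_t(n)|^2$. Since $\lambda_n\ge\lambda_1$, it dominates $\lambda_1\D_{r,\pi}(\rho_t,\pi)^2$; this is the entropy inequality \eqref{eq:entropy-controls-D}. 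Separately, the common-energy identity \eqref{eq:common-energy} shows that $t\mapsto\D_{r,\pi}(\rho_t,\pi)^2$ is nonincreasing, because its derivative is $-\int|\nabla\PhiPot_{\rho_t-\pi}|^2\dd\rho_t$. Integrating the entropy inequality over $[0,t]$ and using this monotonicity gives $\lambda_1 t\,\D_{r,\pi}(\rho_t,\pi)^2\le\KL(\rho_0\mid\pi)$, which is finite because $f_0$ is bounded.

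\textbf{All-time discrepancy.} The same energy identity applied to the empirical flow (valid by \ref{ass:GB-kernel}, where point evaluation makes the atomic energy finite) shows that $\D_{r,\pi}(\mu^N_t,\pi)$ is nonincreasing almost surely. For $t\ge T$ the triangle inequality and monotonicity of both flows give the stitching bound \eqref{eq:intro-stitching}:
\[
\D(\mu^N_t,\rho_t)\le\D(\mu^N_t,\pi)+\D(\rho_t,\pi)\le\D(\mu^N_T,\pi)+\D(\rho_T,\pi)\le\D(\mu^N_T,\rho_T)+2\D(\rho_T,\pi).
\]
Hence
\[
\sup_{t\ge0}\D(\mu^N_t,\rho_t)\le\sup_{t\le T}\D(\mu^N_t,\rho_t)+2\D(\rho_T,\pi).
\]
Taking expectations and applying Jensen together with \ref{ass:GB-form} bounds the right side by $\sqrt{C_T/N}+2(\KL/(\lambda_1T))^{1/2}$. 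Fix $\eps$, choose $T$ to make the population term at most $\eps$, and then let $N\to\infty$; this proves \eqref{eq:abstract-alltime-D}.

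\textbf{Passage to $W_1$.} This is the main obstacle, because the conversion must be uniform without any rate. The approach I would try first is a modulus-of-continuity lemma. By \ref{ass:GB-compactness}, on the set $S_R=\{\mu:\D(\mu,\pi)\le R\}$ the discrepancy separates points and $S_R$ is $W_1$-relatively compact. A compactness/contradiction argument, using lower semicontinuity of $\D$ and the fact that $W_1$ convergence implies weak convergence, then yields a modulus $\omega_R$ with $\omega_R(0^+)=0$ and $W_1(\mu,\nu)\le\omega_R(\D(\mu,\nu))$ on $S_R$. Every $\mu_t^N$ and $\rho_t$ lies in the random sublevel $S_{R_N}$ with $R_N=\max(\D(\mu^N_0,\pi),\D(\rho_0,\pi))$, by monotonicity. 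Moreover $\E\D(\mu^N_0,\pi)^2=\D(\rho_0,\pi)^2+N^{-1}(\int\G(x,x)\dd\rho_0-\D(\rho_0,\pi)^2)$ is bounded uniformly in $N$ by \eqref{eq:initial-diagonal}. The expectation is then split on $\{R_N\le R\}$ and its complement. On the first event the bound is $\omega_R(\sup_t\D)$, which tends to zero in probability and is bounded. On the complement, \ref{ass:GB-moment} bounds $W_1$ by $C(1+R_N)^{1/2}$, and Cauchy--Schwarz with Markov gives a contribution that is small uniformly in $N$ as $R\to\infty$. Bounded convergence then gives \eqref{eq:abstract-alltime-W1}.

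\textbf{Consequences.} The limit \eqref{eq:abstract-last} follows from $\E W_1(\mu^N_{t_N},\pi)\le\E\sup_t W_1(\mu^N_t,\rho_t)+W_1(\rho_{t_N},\pi)$. The last term tends to zero by the same modulus applied to the deterministic sublevel $S_{\D(\rho_0,\pi)}$, since $\D(\rho_t,\pi)\to0$ by \eqref{eq:abstract-population-rate}. For the labelled marginals \eqref{eq:abstract-labelled}, exchangeability of the i.i.d. initialization is preserved by the symmetric dynamics. The standard Sznitman-type comparison between exchangeable marginals and empirical-measure tensor powers (sampling with versus without replacement, error $O(\ell^2/N)$) then turns $\E W_1(\mu^N_{t_N},\pi)\to0$ into weak convergence to $\pi^{\otimes\ell}$, by testing against products of bounded Lipschitz functions.
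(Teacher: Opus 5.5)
Your proposal is correct and follows the paper's proof essentially step for step: entropy dissipation with the gap and energy monotonicity for the population rate, the stitching bound with $T$ fixed before $N\to\infty$, the sublevel modulus $\omega_R$ with the random level $R_N$ controlled by the initial fluctuation identity and the moment hypothesis, and the with/without-replacement comparison for the labelled marginals. The only cosmetic difference is that you conclude the labelled statement by testing against products of bounded Lipschitz functions, whereas the paper uses an explicit $W_{1,\ell}$ coupling bound; both arguments are sound.
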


\subsection{Extension to unbounded state spaces}

The assumptions in the all-time principle distinguish the part of the argument that uses compactness.  On $\R^d$, the smooth quadratic-type confinement in \Cref{ass:euclidean-target} gives local kernels and a moment bound; finite-time population regularity and transport stability require the additional input in \Cref{ass:euclidean-transport}.  \Cref{thm:euclidean-main} states the proved static results and the conditional dynamical extension together with its precise hypotheses in \Cref{sec:euclidean-targets}.  \Cref{cor:mixture-main-summary} treats common-covariance Gaussian mixtures, with an explicit Poincar\'e bound proved in \Cref{sec:gaussian-mixtures}.

\section{From the target generator to a matrix Stein kernel}
\label{sec:kernel-construction}

The all-time argument needs a velocity that dissipates the chosen target MMD.  For the Green--Bessel energy this velocity is $-\nabla\PhiPot_{\mu-\pi}$.  We construct a positive matrix kernel whose source Stein divergence gives exactly that force.  The eigenvalue equation determines the coefficient of each matrix feature, so positivity and the Stein identity can be checked in the same basis.

\subsection{The scalar Green--Bessel kernel}

Assume for the moment that $\A_\pi$ has compact resolvent and that the eigenfunction series below converges with the derivatives used in the argument.  From \eqref{eq:Q-def}--\eqref{eq:q-def},
\begin{equation}
\label{eq:Q-spectral}
\Q_{r,\pi}u
=\sum_{n\ge1}q_r(\lambda_n)
\ip{u}{\varphi_n}_{L^2(\pi)}\varphi_n,
\qquad u\in L^2_0(\pi).
\end{equation}
The scalar kernel is therefore \eqref{eq:G-def}.  Since every $\varphi_n$, $n\ge1$, is centered,
\begin{equation}
\label{eq:G-centered}
\int_{\X}\G_{r,\pi}(x,y)\,\pi(\dd y)=0,
\qquad
\int_{\X}\G_{r,\pi}(x,y)\,\pi(\dd x)=0.
\end{equation}
In particular,
\begin{equation}
\label{eq:Phi-pi-zero}
\PhiPot_\pi=0.
\end{equation}

The kernel is positive semidefinite because $q_r(\lambda_n)>0$.  For points $x_1,\ldots,x_m$ and coefficients $a_1,\ldots,a_m$,
\begin{equation}
\label{eq:G-positive}
\sum_{i,j=1}^m a_i a_j\G_{r,\pi}(x_i,x_j)
=
\sum_{n\ge1}q_r(\lambda_n)
\abs{\sum_{i=1}^m a_i\varphi_n(x_i)}^2
\ge0.
\end{equation}
The same expansion gives \eqref{eq:D-spectral}.  If $\G_{r,\pi}(x,x)<\infty$, then every empirical measure has finite energy, since
\begin{equation}
\label{eq:empirical-energy-finite}
\iint\G_{r,\pi}(x,y)\,\mu^N(\dd x)\mu^N(\dd y)
=\frac1{N^2}\sum_{i,j=1}^N\G_{r,\pi}(X_i,X_j).
\end{equation}
The diagonal terms $i=j$ are precisely where the Bessel smoothing is used.

\subsection{Why the scalar kernel is not enough}

For a scalar kernel $k$, classical SVGD uses the vector-valued kernel $k\Id$ and obtains
\begin{equation}
\label{eq:scalar-SVGD}
 v_\mu(x)
=\int_{\X}
\left[
\nabla_y k(x,y)+k(x,y)\nabla\log\pi(y)
\right]\mu(\dd y)
\end{equation}
in Euclidean coordinates.  Equivalently, the integrand is the weighted source divergence $\operatorname{div}_{\pi,y}(k(x,y)\Id)$.  Setting $k=\G_{r,\pi}$ does not generally give $-\nabla_x\PhiPot_{\mu-\pi}$.  The weighted source divergence in \eqref{eq:scalar-SVGD} and the negative gradient in the first variable are different operations for a general target-adapted kernel.

A matrix lift lets the source divergence act on an eigenfunction gradient, where the eigenvalue equation computes it exactly.  This is how the prescribed energy force becomes a Stein velocity.

\subsection{Positive matrix lift}

For tangent vectors $a\in T_x\X$ and $b\in T_y\X$, use the convention
\[
(a\otimes b)z=a\,\ip{b}{z}_{T_y\X}.
\]
Define
\begin{equation}
\label{eq:K-construction}
\K_{r,\pi}(x,y)
=
\sum_{n\ge1}
\frac{q_r(\lambda_n)}{\lambda_n}
\nabla\varphi_n(x)\otimes\nabla\varphi_n(y).
\end{equation}
The coefficient $q_r(\lambda_n)/\lambda_n$ is chosen because one weighted divergence in $y$ contributes the factor $-\lambda_n$.

\begin{proposition}[Positivity and symmetry]
\label{prop:K-positive}
Whenever the series \eqref{eq:K-construction} converges pointwise, $\K_{r,\pi}$ is a positive semidefinite bundle-valued kernel and
\begin{equation}
\label{eq:K-symmetry}
\K_{r,\pi}(y,x)=\K_{r,\pi}(x,y)^*.
\end{equation}
\end{proposition}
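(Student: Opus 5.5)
The plan is to verify both claims termwise in the eigenfunction expansion \eqref{eq:K-construction} and then pass to the limit using only pointwise convergence. For symmetry, each term $\nabla\varphi_n(x)\otimes\nabla\varphi_n(y)$ maps $T_y\X$ to $T_x\X$ by $z\mapsto\nabla\varphi_n(x)\ip{\nabla\varphi_n(y)}{z}$. Its adjoint with respect to the Riemannian inner products on $T_x\X$ and $T_y\X$ is $w\mapsto\nabla\varphi_n(y)\ip{\nabla\varphi_n(x)}{w}$, which is exactly $\nabla\varphi_n(y)\otimes\nabla\varphi_n(x)$. Since the coefficients $q_r(\lambda_n)/\lambda_n$ are real, the partial sums satisfy $\K^{(M)}(y,x)=\K^{(M)}(x,y)^*$. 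Taking adjoints is a linear map between finite-dimensional spaces of linear maps, hence continuous, so pointwise convergence of the series transfers the identity to the limit and gives \eqref{eq:K-symmetry}.

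For positive semidefiniteness of a bundle-valued kernel, I take the definition that for every finite collection of points $x_1,\dots,x_m$ and tangent vectors $a_i\in T_{x_i}\X$,
\[
\sum_{i,j=1}^m\ip{a_i}{\K_{r,\pi}(x_i,x_j)a_j}_{T_{x_i}\X}\ge0.
\]
For a single term of the partial sum, the double sum factors:
\[
\sum_{i,j}\ip{a_i}{\nabla\varphi_n(x_i)}\ip{\nabla\varphi_n(x_j)}{a_j}=\Bigl|\sum_i\ip{a_i}{\nabla\varphi_n(x_i)}\Bigr|^2 .
\]
This mirrors the scalar computation \eqref{eq:G-positive}. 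Because $q_r(\lambda_n)/\lambda_n=\lambda_n^{-2}(1+\lambda_n)^{-r}>0$ for $n\ge1$, each partial-sum quadratic form is a nonnegative combination of squares. The finite quadratic form depends only on the $m^2$ values $\K(x_i,x_j)$, and each of these converges pointwise, so the limit form is a limit of nonnegative numbers and is therefore nonnegative.

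No step is genuinely hard. The only care needed is to use pointwise convergence at each of the finitely many pairs $(x_i,x_j)$, rather than any uniform or absolute convergence. Since every partial sum is already positive semidefinite and symmetric, no rearrangement or reordering issue arises: the hypothesis that the series converges in its given order is all that is used.
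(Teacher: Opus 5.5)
Your proof is correct and follows the paper's argument. Both expand the quadratic form termwise into $\sum_{n}\frac{q_r(\lambda_n)}{\lambda_n}\bigl|\sum_i\langle c_i,\nabla\varphi_n(x_i)\rangle\bigr|^2\ge0$, and both read off the adjoint symmetry from $(a\otimes b)^*=b\otimes a$ in each rank-one term. Your passage through partial sums at the finitely many pairs $(x_i,x_j)$ just makes explicit the interchange of a finite sum with the convergent series that the paper performs without comment.
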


\begin{proof}
Let $x_1,\ldots,x_m\in\X$ and $c_i\in T_{x_i}\X$.  Then
\begin{align}
\sum_{i,j=1}^m
\ip{c_i}{\K_{r,\pi}(x_i,x_j)c_j}
&=
\sum_{n\ge1}\frac{q_r(\lambda_n)}{\lambda_n}
\sum_{i,j=1}^m
\ip{c_i}{\nabla\varphi_n(x_i)}
\ip{\nabla\varphi_n(x_j)}{c_j}
\\
&=
\sum_{n\ge1}\frac{q_r(\lambda_n)}{\lambda_n}
\abs{\sum_{i=1}^m\ip{c_i}{\nabla\varphi_n(x_i)}}^2
\ge0.
\end{align}
The adjoint symmetry follows by exchanging $x$ and $y$ in each rank-one term.
\end{proof}

The kernel therefore defines a vector-valued reproducing kernel Hilbert space in the standard sense; see, for example, \citet{micchelli2005learning}.  Its use in the SVGD variational construction follows the matrix-kernel framework of \citet{wang2019matrix}.  The representer identity identifies its Stein direction with the velocity computed below.

\subsection{The Stein--Green intertwining identity}

Each source divergence cancels the denominator $\lambda_n$ in the matrix coefficient.  Since
\begin{equation}
\label{eq:eigen-div}
\divpi\nabla\varphi_n=-\A_\pi\varphi_n=-\lambda_n\varphi_n,
\end{equation}
we have, in the source variable,
\begin{align}
\operatorname{div}_{\pi,y}
\left(
\nabla\varphi_n(x)\otimes\nabla\varphi_n(y)
\right)
&=
\nabla\varphi_n(x)\,
\divpi\nabla\varphi_n(y)
\\
&=-\lambda_n\varphi_n(y)\nabla\varphi_n(x).
\end{align}
Multiplying by $q_r(\lambda_n)/\lambda_n$ and summing gives the required force.

\begin{proposition}[Stein--Green intertwining]
\label{prop:intertwining}
Assume that the series \eqref{eq:G-def} and \eqref{eq:K-construction} can be differentiated termwise in the indicated variables.  Then
\begin{equation}
\label{eq:intertwining}
\operatorname{div}_{\pi,y}\K_{r,\pi}(x,y)
=-\nabla_x\G_{r,\pi}(x,y).
\end{equation}
Consequently, the matrix-kernel Stein velocity is
\begin{equation}
\label{eq:velocity-potential}
 v_\mu(x)
=-\int_{\X}\nabla_x\G_{r,\pi}(x,y)\,\mu(\dd y)
=-\nabla\PhiPot_{\mu-\pi}(x).
\end{equation}
\end{proposition}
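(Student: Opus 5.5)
The identity is checked termwise in the eigenbasis and then summed using the hypothesis that differentiation and summation commute. For fixed $x$, regard $y\mapsto\K_{r,\pi}(x,y)$ as a section with values in $T_x\X\otimes T_y^*\X$. The source-variable weighted divergence acts row-wise: for $a\in T_x\X$ fixed and a vector field $F$ in $y$, $\operatorname{div}_{\pi,y}(a\otimes F(y))=a\,\divpi F(y)$. Since $a$ does not depend on $y$, only the $y$-factor is differentiated. With $F=\nabla\varphi_n$, the eigenvalue relation \eqref{eq:eigen-div} gives
\[
\operatorname{div}_{\pi,y}\bigl(\nabla\varphi_n(x)\otimes\nabla\varphi_n(y)\bigr)=-\lambda_n\varphi_n(y)\nabla\varphi_n(x).
\]
This requires $\varphi_n$ to be a classical (smooth) eigenfunction, which holds under the standing compact-resolvent and smooth-eigenbasis setting. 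Multiplying by $q_r(\lambda_n)/\lambda_n$ cancels the factor $\lambda_n$ and leaves $-q_r(\lambda_n)\varphi_n(y)\nabla\varphi_n(x)$.

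Next, sum over $n\ge1$. By the assumed termwise differentiability of \eqref{eq:K-construction} in $y$, the left side equals $\operatorname{div}_{\pi,y}\K_{r,\pi}(x,y)$. By the assumed termwise differentiability of \eqref{eq:G-def} in $x$,
\[
\nabla_x\G_{r,\pi}(x,y)=\sum_{n\ge1}q_r(\lambda_n)\varphi_n(y)\nabla\varphi_n(x).
\]
Comparing the two series gives \eqref{eq:intertwining}. On a manifold I would note that the weighted divergence is the covariant one, $\divpi F=\nabla\cdot F+\ip{F}{\nabla\log\pi}$. Since $x$ is frozen, the $T_x\X$ factor is a constant vector and passes through the divergence, so no connection terms arise.

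For the velocity, integrate \eqref{eq:intertwining} in $y$ against $\mu$, using the definition \eqref{eq:v-mu} of the matrix-kernel Stein velocity. Then move $\nabla_x$ outside the $\mu$-integral to obtain $-\nabla\PhiPot_\mu(x)$. This interchange is justified when $\sup_{y}\abs{\nabla_x\G_{r,\pi}(x,y)}$ is locally bounded in $x$, or more generally is $\mu$-integrable; this is the local $C^1$ regularity implicit in the hypothesis and in \ref{ass:GB-kernel}. Finally, the centering \eqref{eq:G-centered} gives $\PhiPot_\pi=0$, so $\PhiPot_\mu=\PhiPot_{\mu-\pi}$, which yields \eqref{eq:velocity-potential}.

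The main obstacle is justifying the exchange of $\operatorname{div}_{\pi,y}$ and $\nabla_x$ with the infinite sums, together with integrating against a possibly atomic $\mu$. Here the proposition assumes the termwise differentiation outright, so the plan is to state precisely which uniform convergence is being used. That convergence is uniform convergence of the differentiated series on compact sets, which later follows from the Bessel weights and eigenfunction derivative bounds of \Cref{prop:compact-regularity}. The final interchange with $\mu$ then reduces to dominated convergence.
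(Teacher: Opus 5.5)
Your proposal is correct and follows the paper's proof. It uses the same termwise identity $\operatorname{div}_{\pi,y}(\nabla\varphi_n(x)\otimes\nabla\varphi_n(y))=-\lambda_n\varphi_n(y)\nabla\varphi_n(x)$, the cancellation of $\lambda_n$ by the coefficient $q_r(\lambda_n)/\lambda_n$, summation under the termwise-differentiation hypothesis, and integration against $\mu$ followed by the centering \eqref{eq:G-centered}. You pull $\nabla_x$ outside first and then use $\PhiPot_\pi=0$, whereas the paper first subtracts the vanishing $\pi$-integral of $\nabla_x\G_{r,\pi}(x,\cdot)$; this is immaterial, and your dominated-convergence remark spells out an interchange the paper leaves implicit.
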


\begin{proof}
The first statement is the termwise calculation above.  Integrating \eqref{eq:intertwining} against $\mu$ gives the first equality in \eqref{eq:velocity-potential}.  By \eqref{eq:G-centered}, the same integral against $\pi$ vanishes, which gives the second equality.
\end{proof}

\begin{remark}[Target score and normalization]
The construction uses the target generator and target score; it requires no estimate of the score of the evolving law.  Multiplying the unnormalized density $e^{-V}$ by a positive constant leaves the probability measure $\pi$, its score, $\A_\pi$, and the kernels normalized in $L^2(\pi)$ unchanged.  If instead one normalizes eigenfunctions in $L^2(e^{-V}\dd x)$, the scalar and matrix kernels differ from their probability-normalized versions by a common positive factor.  This changes only the deterministic time scale of the particle trajectories.  Thus the force can be specified up to a constant time scale without knowing $Z$; computing the target-adapted spectral features remains a separate numerical task.
\end{remark}

\begin{remark}[MMD energy and Stein discrepancy]
The velocity in \eqref{eq:velocity-potential} is the formal Wasserstein gradient velocity of $\frac12\D_{r,\pi}(\mu,\pi)^2$, an MMD energy; this is the gradient-flow construction studied by \citet{arbel2019mmd}.  It should be distinguished from gradient descent on a kernel Stein discrepancy as in \citet{korba2021ksd}.  For the matrix kernel used here, applying weighted divergence in both variables gives the scalar Stein kernel
\[
 k_{\mathrm{Stein}}(x,y)
 =\sum_{n\ge1}\lambda_n q_r(\lambda_n)\varphi_n(x)\varphi_n(y).
\]
Consequently, for $\rho=f\pi$ with $f-1\in L^2_0(\pi)$, its squared matrix-kernel Stein discrepancy is
\[
 \operatorname{KSD}_{\K_{r,\pi}}(\rho,\pi)^2
 =\sum_{n\ge1}(1+\lambda_n)^{-r}|(f-1)_n|^2.
\]
This is the population entropy dissipation in \Cref{sec:energy-population}.  The common target MMD energy has the different weights $q_r(\lambda_n)$.
\end{remark}

\subsection{Particle equation and feature factorization}

Substituting the empirical measure into \eqref{eq:velocity-potential} gives
\begin{equation}
\label{eq:particle-kernel-form}
\dot X_i^N
=-\frac1N\sum_{j=1}^N
\nabla_x\G_{r,\pi}(X_i^N,X_j^N).
\end{equation}
Define the empirical eigenfeature moments
\begin{equation}
\label{eq:feature-moment}
m_n^N(t)=\mu_t^N(\varphi_n)
=\frac1N\sum_{j=1}^N\varphi_n(X_j^N(t)).
\end{equation}
Then
\begin{equation}
\label{eq:particle-feature-form}
\dot X_i^N
=-\sum_{n\ge1}q_r(\lambda_n)m_n^N
\nabla\varphi_n(X_i^N).
\end{equation}
The target moments vanish: $\pi(\varphi_n)=0$ for $n\ge1$.  Formula \eqref{eq:particle-feature-form} makes the target adaptation explicit.  Low eigenmodes, which encode large-scale geometry of the target diffusion, receive the largest weights; high modes are suppressed by the Bessel factor.

The moments do not evolve independently.  Differentiating \eqref{eq:feature-moment} gives
\begin{equation}
\label{eq:feature-dynamics}
\frac{\dd}{\dd t}m_m^N
=-\sum_{n\ge1}q_r(\lambda_n)m_n^N
\left[
\frac1N\sum_{i=1}^N
\nabla\varphi_m(X_i^N)\cdot\nabla\varphi_n(X_i^N)
\right].
\end{equation}
Thus the feature vector is driven by a positive semidefinite empirical Gram matrix.  For a finite spectral cutoff, \eqref{eq:particle-feature-form} also removes the all-pairs kernel loop: one first computes the $L$ empirical moments and then evaluates a weighted feature-gradient sum at each particle.

\subsection{Two representative targets}

\begin{example}[Uniform target on the flat torus]
Let $\X=\mathbb T^d$ and let $\pi$ be uniform.  Then $\A_\pi=-\Delta$, the nonconstant eigenfunctions are Fourier modes, and
\[
q_r(4\pi^2\abs{k}^2)
=
\frac1{4\pi^2\abs{k}^2(1+4\pi^2\abs{k}^2)^r}.
\]
The construction is a smoothed inverse Laplacian.  Target adaptation is invisible only because the target geometry is translation invariant.
\end{example}

\begin{example}[Standard Gaussian target]
For $\pi=\mathcal N(0,\Id)$ on $\R^d$,
\[
\A_\pi=-\Delta+x\cdot\nabla.
\]
The eigenfunctions are multivariate Hermite polynomials and the eigenvalues are their total degrees.  A degree-$m$ Hermite moment is weighted by $[m(1+m)^r]^{-1}$.  In contrast with a translation-invariant kernel, both the features and their spatial gradients are adapted to Gaussian confinement.
\end{example}

\section{A common target energy and population relaxation}
\label{sec:energy-population}

Two dissipation laws drive the proof.  The quadratic discrepancy decreases along both particle and population trajectories, while population entropy decreases at a rate that controls this discrepancy.  Together they imply population relaxation and keep the particles close to the target once a finite-time comparison has brought them there.  We establish the two laws here; the comparison argument follows in \Cref{sec:relative-stability}.

\subsection{The Green--Bessel discrepancy as a weighted negative Sobolev norm}

The spectral weights make the discrepancy weak enough to include atoms and give an explicit relation to entropy dissipation.  For a centered signed measure $\sigma$ whose kernel mean exists, \eqref{eq:D-spectral} reads
\begin{equation}
\label{eq:D-spectral-section4}
\D_{r,\pi}(\sigma)^2
=
\sum_{n\ge1}q_r(\lambda_n)\abs{\sigma(\varphi_n)}^2.
\end{equation}
For a centered density $u\in L^2_0(\pi)$,
\begin{equation}
\label{eq:D-Q-form}
\D_{r,\pi}(u\pi)^2
=
\ip{u}{\Q_{r,\pi}u}_{L^2(\pi)}
=
\norm{\Q_{r,\pi}^{1/2}u}_{L^2(\pi)}^2.
\end{equation}
Since $\Q_{r,\pi}$ has spectral order $-2r-2$, this is a target-adapted negative Sobolev norm of order $r+1$.  Its inverse form is
\begin{equation}
\label{eq:inverse-form}
\ip{h}{\Q_{r,\pi}^{-1}h}_{L^2(\pi)}
=
\norm{\Q_{r,\pi}^{-1/2}h}_{L^2(\pi)}^2,
\qquad
\Q_{r,\pi}^{-1}=\A_\pi(\Id+\A_\pi)^r.
\end{equation}
Thus the potential $h=\Q_{r,\pi}u$ has inverse-form norm equal to the discrepancy of $u\pi$.  This reproducing Hilbert-space viewpoint will justify differentiation for atomic measures in \Cref{app:atomic-regularization} and express the transport condition in the Euclidean extension.

The dual definition separates probability measures through smooth test functions.  The spectral and kernel formulas hold under the integrability condition in \eqref{eq:D-spectral}, which the empirical and bounded-density population laws used here satisfy.  Converting this weak discrepancy to $W_1$ will require the compactness established in the target-specific sections.

\subsection{The common target-energy identity}

Let $\mu_t$ be either a smooth population solution or an empirical solution of
\begin{equation}
\label{eq:common-continuity}
\partial_t\mu_t+\nabla\cdot(\mu_t v_{\mu_t})=0,
\qquad
v_{\mu_t}=-\nabla\PhiPot_{\mu_t-\pi}.
\end{equation}
Set
\[
g_t=\PhiPot_{\mu_t-\pi}.
\]
Formally, symmetry of $\G_{r,\pi}$ gives
\[
\frac12\frac{\dd}{\dd t}\D_{r,\pi}(\mu_t,\pi)^2
=\langle\partial_t\mu_t,g_t\rangle.
\]
Testing the continuity equation with $g_t$ yields
\begin{equation}
\label{eq:target-energy-formal}
\frac12\frac{\dd}{\dd t}\D_{r,\pi}(\mu_t,\pi)^2
=
\int_{\X}\nabla g_t\cdot v_{\mu_t}\,\dd\mu_t
=-\int_{\X}\abs{\nabla g_t}^2\,\dd\mu_t.
\end{equation}
Because $g_t$ itself varies with the measure, this calculation requires a chain rule.  The kernel-mean proof in \Cref{app:atomic-regularization} applies to smooth laws and finite particle sums alike, including their diagonal self-interactions.  It gives the following identity.

\begin{proposition}[Common target-energy dissipation]
\label{prop:common-energy}
Under the kernel-mean differentiability conditions of \Cref{app:atomic-regularization}, classical population solutions and empirical solutions satisfy, in integrated form and almost everywhere in time,
\begin{equation}
\label{eq:common-energy}
\frac12\frac{\dd}{\dd t}\D_{r,\pi}(\mu_t,\pi)^2
=-\int_{\X}
\abs{\nabla\PhiPot_{\mu_t-\pi}(x)}^2\,\mu_t(\dd x)
\le0.
\end{equation}
In particular,
\begin{equation}
\label{eq:target-energy-monotone}
\D_{r,\pi}(\mu_t,\pi)
\le \D_{r,\pi}(\mu_s,\pi),
\qquad t\ge s\ge0.
\end{equation}
\end{proposition}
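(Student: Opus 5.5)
We write $E(t)=\D_{r,\pi}(\mu_t,\pi)^2$ and use the kernel-mean representation. Under the integrability condition for \eqref{eq:D-spectral}, point evaluation on $\mathcal H_{r,\pi}$ is continuous, so $\mu\mapsto\int \G_{r,\pi}(\cdot,y)\,\mu(\dd y)$ is a Bochner integral in $\mathcal H_{r,\pi}$. Its image for $\mu-\pi$ is exactly the potential $g_t=\PhiPot_{\mu_t-\pi}$, and
\[
E(t)=\norm{g_t}_{\mathcal H_{r,\pi}}^2 .
\]
Because $E(t)$ is the squared norm of a Hilbert-valued curve, the plan is to show that $t\mapsto g_t$ is absolutely continuous in $\mathcal H_{r,\pi}$. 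Then $\frac{\dd}{\dd t}E=2\langle \dot g_t,g_t\rangle_{\mathcal H_{r,\pi}}$ holds almost everywhere and in integrated form. This step replaces the formal symmetry argument, and it treats the diagonal self-interactions automatically, since no double integral is ever differentiated directly.

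\emph{Empirical case.} Here $g_t=\frac1N\sum_j k_{X_j(t)}$ with $k_y=\G_{r,\pi}(\cdot,y)$, and the constant term $\PhiPot_\pi=0$ drops out. Continuity of first-derivative evaluation in \ref{ass:GB-kernel} implies that $y\mapsto k_y$ is $C^1$ from $\X$ into $\mathcal H_{r,\pi}$, with derivative $\partial_y k_y\cdot w$ representing $h\mapsto \nabla h(y)\cdot w$. The trajectories $X_j$ are $C^1$ because the force is $C^1$. The chain rule then gives
\[
\dot g_t=\frac1N\sum_j \partial_y k_{X_j}\,\dot X_j ,
\]
and the reproducing property yields
\[
\langle \dot g_t,g_t\rangle=\frac1N\sum_j\nabla g_t(X_j)\cdot\dot X_j .
\]
Substituting $\dot X_j=-\nabla g_t(X_j)$ from \eqref{eq:particle-system} and \eqref{eq:velocity-potential} gives
\[
\langle \dot g_t,g_t\rangle=-\int\abs{\nabla g_t}^2\dd\mu_t^N ,
\]
which is \eqref{eq:common-energy}.

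\emph{Population case.} For a classical solution we represent $\rho_t$ as the pushforward of $\rho_0$ under the flow map $\Psi_t$ of the velocity $v_{\rho_t}$. This flow exists and is $C^1$ in time by \ref{ass:GB-pop} and the $C^2$ regularity of the kernel. Then
\[
g_t=\int k_{\Psi_t(x)}\,\rho_0(\dd x).
\]
Differentiating under the Bochner integral is justified by uniform bounds on $\norm{\partial_yk_y}$ along the flow, which are global on compact $\X$ and local in general. The same reproducing-property computation with $\rho_t$ in place of $\mu^N_t$ gives the identity. The integrated form follows by the fundamental theorem of calculus for absolutely continuous functions. Monotonicity \eqref{eq:target-energy-monotone} is then immediate, since the right side of \eqref{eq:common-energy} is nonpositive and $E\ge0$.

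The main obstacle is the regularity of $y\mapsto k_y$ into $\mathcal H_{r,\pi}$: we need it to be $C^1$ with a locally bounded derivative, not merely a continuous functional. We would derive this from the spectral representation. The bound
\[
\norm{\partial_y k_y}^2=\sum_n q_r(\lambda_n)\abs{\nabla\varphi_n(y)}^2
\]
is finite and continuous when $r$ is large enough, using Weyl asymptotics and Sobolev/H\"ormander bounds on eigenfunction gradients, or taking this as the stated hypothesis. Strong differentiability then follows from uniform convergence of the differentiated series, that is, termwise differentiation as in \Cref{prop:intertwining}.
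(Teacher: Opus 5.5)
Your empirical-particle argument and the overall mechanism (kernel means in $\mathcal H_{r,\pi}$, the Hilbert-space chain rule, and the reproducing property for first derivatives) are the same as the paper's proof in \Cref{app:atomic-regularization}. You differ in the population step. You argue in Lagrangian form: you write $g_t=\int k_{\Psi_t(x)}\,\rho_0(\dd x)$ and differentiate under the Bochner integral. The paper argues in Eulerian form. Bounded $f_t$ and $v_t$ make $h\mapsto\int\nabla h\cdot v_t f_t\,\dd\pi$ a bounded functional on $\mathcal H_{r,\pi}$, because $\norm{\nabla h}_{L^2(\pi)}^2=\sum_n\lambda_n\abs{h_n}^2\le\norm{h}_{\mathcal H_{r,\pi}}^2$ with $h_n=\ip{h}{\varphi_n}_{L^2(\pi)}$. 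Density of smooth tests then identifies a weak $\mathcal H_{r,\pi}$-valued derivative of the kernel mean, and hence absolute continuity, without any flow map. On a compact manifold your route is sound: $\norm{\partial_yk_y}$ and $v_t$ are uniformly bounded and the flow map exists, so you even obtain a classical time derivative.

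The gap is in the non-compact case, which the proposition also covers and which \Cref{thm:euclidean-main} relies on. There you justify differentiating under the Bochner integral by bounds on $\norm{\partial_yk_y}$ that are only local. That is not enough. $\rho_0=f_0\pi$ generally has unbounded support, and $\norm{\partial_yk_y}$ is unbounded on $\R^d$. Already for the standard Gaussian target, Mehler's formula shows that $\G_{r,\pi}(y,y)=\norm{k_y}^2$ grows exponentially in $\abs{y}^2$, and so does $\norm{\partial_yk_y}^2$; compare \eqref{eq:kernel-transform}. So local bounds give no majorant that is integrable against $\rho_0$. You need an integrable majorant. Since $\int\norm{\partial_yk_y}^2\,\pi(\dd y)\le\sum_n q_r(\lambda_n)\lambda_n=\sum_n(1+\lambda_n)^{-r}$, Cauchy--Schwarz gives
\[
\int_s^t\!\!\int\norm{\partial_yk_y}\,\abs{v_\tau(y)}\,\rho_\tau(\dd y)\,\dd\tau
\le\Bigl(\sum_{n\ge1}(1+\lambda_n)^{-r}\Bigr)^{1/2}\int_s^t\norm{v_\tau}_\infty\norm{f_\tau}_\infty\,\dd\tau .
\]
The sum is finite by \eqref{eq:euclidean-counting} because $r>2d+10$. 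With this bound, Fubini for Bochner integrals gives $g_t-g_s=\int_s^t\int\partial_yk_y\,v_\tau(y)\,\rho_\tau(\dd y)\,\dd\tau$, and the rest of your argument goes through. This trace-type integrability is an extra ingredient that the paper's duality route does not need. That route uses only the operator bound $\norm{\nabla h}_{L^2(\pi)}\le\norm{h}_{\mathcal H_{r,\pi}}$ and the test-function core.
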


The right-hand side may vanish at configurations other than the target.  The identity nevertheless provides exactly the particle control needed later: after a comparison time $T$, the discrepancy from the target cannot increase.  Population attraction will come from the second dissipation law.

\subsection{The density-ratio equation}

For the population law, write
\begin{equation}
\label{eq:density-ratio}
\rho_t=f_t\pi,
\qquad
\nu_t=f_t-1.
\end{equation}
Here $\nu_t$ denotes the centered density error.  By \eqref{eq:compatibility},
\begin{equation}
\label{eq:population-potential-function}
\PhiPot_{\rho_t-\pi}=\Q_{r,\pi}\nu_t.
\end{equation}
The population velocity is therefore
\begin{equation}
\label{eq:population-v-density}
 v_t:=v_{\rho_t}=-\nabla\Q_{r,\pi}\nu_t.
\end{equation}
Dividing \eqref{eq:population} by $\pi$ and using \eqref{eq:weighted-div},
\begin{align}
0
&=\partial_t f_t+\pi^{-1}\nabla\cdot(\pi f_t v_t)\\
&=\partial_t f_t+v_t\cdot\nabla f_t+f_t\divpi v_t.
\end{align}
Moreover,
\begin{equation}
\label{eq:weighted-div-v}
\divpi v_t
=\divpi(-\nabla\Q_{r,\pi}\nu_t)
=\A_\pi\Q_{r,\pi}\nu_t
=(\Id+\A_\pi)^{-r}\nu_t.
\end{equation}
Hence
\begin{equation}
\label{eq:ratio-equation}
\partial_t f_t
+v_t\cdot\nabla f_t
+f_t(\Id+\A_\pi)^{-r}(f_t-1)=0.
\end{equation}

The resolvent power has the semigroup representation
\begin{equation}
\label{eq:resolvent-semigroup}
(\Id+\A_\pi)^{-r}
=\frac1{\Gamma(r)}\int_0^\infty s^{r-1}e^{-s}e^{-s\A_\pi}\,\dd s.
\end{equation}
It therefore preserves positivity and constants.  Along a characteristic $\dot X_t=v_t(X_t)$,
\begin{equation}
\label{eq:ratio-characteristic}
\frac{\dd}{\dd t}f_t(X_t)
=-f_t(X_t)\left[(\Id+\A_\pi)^{-r}f_t(X_t)-1\right].
\end{equation}
Consequently, nonnegativity is preserved and
\begin{equation}
\label{eq:ratio-bound}
0\le f_t\le e^t\norm{f_0}_{L^\infty(\pi)}
\end{equation}
on every finite interval.  Together with compact-state kernel regularity, this estimate gives classical population solutions on every finite interval.  Euclidean population existence and the required bounds are stated separately as hypotheses of the conditional extension.

\subsection{Entropy dissipation}

Population entropy supplies the missing attraction estimate.  The choice $\Q_{r,\pi}=\A_\pi^{-1}(\Id+\A_\pi)^{-r}$ makes its dissipation particularly simple: weighted integration by parts cancels the Green factor $\A_\pi^{-1}$.

Assume first that $f_t$ is strictly positive.  Since mass is conserved,
\begin{align}
\frac{\dd}{\dd t}\KL(\rho_t\mid\pi)
&=\int_{\X}\log f_t\,\partial_t f_t\,\dd\pi\\
&=\int_{\X}\nabla\log f_t\cdot v_t\,\dd\rho_t\\
&=-\int_{\X}\nabla f_t\cdot\nabla\Q_{r,\pi}\nu_t\,\dd\pi\\
&=-\ip{\nu_t}{\A_\pi\Q_{r,\pi}\nu_t}_{L^2(\pi)}\\
&=-\ip{\nu_t}{(\Id+\A_\pi)^{-r}\nu_t}_{L^2(\pi)}.
\end{align}
On compact state spaces, if $f_t$ may vanish, the same identity follows from regularized entropy and dominated convergence; see \Cref{app:entropy-chain-rule}.  For the Euclidean extension, the assumed finite-time bounds on the density and velocity justify the same calculation with spatial cutoffs.

\begin{proposition}[Population entropy dissipation]
\label{prop:entropy-dissipation}
For compact-state classical population solutions, and for Euclidean population solutions satisfying the stated regularity hypothesis,
\begin{equation}
\label{eq:entropy-dissipation}
\frac{\dd}{\dd t}\KL(\rho_t\mid\pi)
=-\ip{\nu_t}{(\Id+\A_\pi)^{-r}\nu_t}_{L^2(\pi)}.
\end{equation}
If $\pi$ has Poincar\'e gap $\lambda_1$, then
\begin{equation}
\label{eq:entropy-controls-D}
\frac{\dd}{\dd t}\KL(\rho_t\mid\pi)
\le-\lambda_1\D_{r,\pi}(\rho_t,\pi)^2.
\end{equation}
Consequently,
\begin{equation}
\label{eq:population-D-rate}
\D_{r,\pi}(\rho_t,\pi)^2
\le\frac{\KL(\rho_0\mid\pi)}{\lambda_1t},
\qquad t>0.
\end{equation}
\end{proposition}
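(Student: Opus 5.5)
The identity \eqref{eq:entropy-dissipation} is the computation displayed just before the statement, so I will organize the proof around justifying it and then deduce \eqref{eq:entropy-controls-D} and \eqref{eq:population-D-rate} spectrally.

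\emph{The identity.} First suppose $f_t$ is strictly positive and classical on $[0,T]$. Differentiate $\KL(\rho_t\mid\pi)=\int f_t\log f_t\,\dd\pi$ and use mass conservation to drop the $\int\partial_t f_t\,\dd\pi$ term. Substitute the continuity equation \eqref{eq:population} in divergence form and integrate by parts against $\log f_t$, which is legitimate on a closed manifold. This gives $\int\nabla\log f_t\cdot v_t\,\dd\rho_t=-\int\nabla f_t\cdot\nabla\Q_{r,\pi}\nu_t\,\dd\pi$. Since $\nabla f_t=\nabla\nu_t$, the weighted integration by parts \eqref{eq:weighted-ibp} turns this into $-\ip{\nu_t}{\A_\pi\Q_{r,\pi}\nu_t}$. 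Spectral calculus gives $\A_\pi\Q_{r,\pi}=(\Id+\A_\pi)^{-r}$, which yields \eqref{eq:entropy-dissipation}.

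\emph{Vanishing densities.} This is the step I expect to be the main obstacle. I would replace the entropy by $\int\bigl[(f_t+\eps)\log(f_t+\eps)-(f_t+\eps)\bigr]\dd\pi$ and repeat the computation. The transport term becomes $\int \frac{f_t}{f_t+\eps}\nabla f_t\cdot v_t\,\dd\pi$. Because $\nabla f_t$ is bounded on finite intervals (classical solution) and $f_t/(f_t+\eps)\to\mathbf 1_{\{f_t>0\}}$, dominated convergence applies. On $\{f_t=0\}$ the gradient $\nabla f_t$ vanishes almost everywhere, since $f_t\ge0$ attains its minimum there. Hence the limit is again $-\int\nabla f_t\cdot v_t\,\dd\pi$. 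I would argue in integrated-in-time form and pass to the limit using the bound \eqref{eq:ratio-bound}, as in \Cref{app:entropy-chain-rule}. In the Euclidean case, the same computation with spatial cutoffs works under the assumed regularity hypothesis.

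\emph{Poincar\'e step and rate.} Expand $\nu_t=\sum_{n\ge1}\nu_n\varphi_n$. The dissipation equals $\sum(1+\lambda_n)^{-r}|\nu_n|^2=\sum\lambda_n q_r(\lambda_n)|\nu_n|^2\ge\lambda_1\sum q_r(\lambda_n)|\nu_n|^2$. By \eqref{eq:D-Q-form} the last sum is $\D_{r,\pi}(\rho_t,\pi)^2$, which proves \eqref{eq:entropy-controls-D}. Integrating over $[0,t]$ and using $\KL\ge0$ gives $\lambda_1\int_0^t\D_{r,\pi}(\rho_s,\pi)^2\dd s\le\KL(\rho_0\mid\pi)$. \Cref{prop:common-energy} says $s\mapsto\D_{r,\pi}(\rho_s,\pi)$ is nonincreasing, so the integral is at least $t\,\D_{r,\pi}(\rho_t,\pi)^2$, which gives \eqref{eq:population-D-rate}. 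Finally, $\KL(\rho_0\mid\pi)\le\log M_0<\infty$ because $f_0$ is bounded.
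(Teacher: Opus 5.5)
Your proposal is correct and follows the paper's proof essentially step for step. The steps are the formal computation for positive $f_t$, the $\eps$-regularized entropy of \Cref{app:entropy-chain-rule} with the gradient vanishing at zeros of $f_t$, the spectral comparison $\lambda_n q_r(\lambda_n)=(1+\lambda_n)^{-r}\ge\lambda_1 q_r(\lambda_n)$, and target-energy monotonicity from \Cref{prop:common-energy} to turn the time-integrated bound into \eqref{eq:population-D-rate}. Your $-(s+\eps)$ normalization differs from the paper's $\beta_\eps$ only by a term whose integral is constant under mass conservation, so it changes nothing. One small sign slip: the limit of the regularized transport term is $\int\nabla f_t\cdot v_t\,\dd\pi=-\int\nabla f_t\cdot\nabla\Q_{r,\pi}\nu_t\,\dd\pi$, not $-\int\nabla f_t\cdot v_t\,\dd\pi$.
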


\begin{proof}
The preceding chain rule identifies the entropy dissipation.  To compare it with the discrepancy, expand both quantities in the spectral basis:
\[
\ip{\nu}{(\Id+\A_\pi)^{-r}\nu}
=\sum_{n\ge1}(1+\lambda_n)^{-r}\abs{\nu_n}^2,
\]
whereas
\[
\D_{r,\pi}(\nu\pi)^2
=\sum_{n\ge1}\frac{\abs{\nu_n}^2}{\lambda_n(1+\lambda_n)^r}.
\]
Since $\lambda_n\ge\lambda_1$ for $n\ge1$, the first sum is at least $\lambda_1$ times the second.  Integrating \eqref{eq:entropy-controls-D} from $0$ to $t$ gives
\[
\lambda_1\int_0^t\D_{r,\pi}(\rho_s,\pi)^2\,\dd s
\le \KL(\rho_0\mid\pi).
\]
This controls the time average of the squared discrepancy.  Monotonicity from \Cref{prop:common-energy} turns it into a bound at the final time:
\[
t\D_{r,\pi}(\rho_t,\pi)^2
\le\int_0^t\D_{r,\pi}(\rho_s,\pi)^2\,\dd s.
\]
This proves \eqref{eq:population-D-rate}.
\end{proof}

\begin{remark}[The role of the spectral gap]
The gap is used precisely in \eqref{eq:entropy-controls-D}: it compares the two spectral weights and so converts entropy dissipation into discrepancy decay.  Kernel regularity justifies the identities; finite-time comparison and compactness will then turn this population estimate into the particle conclusions.  These are the separate roles of the corresponding inputs in \Cref{ass:GB}.
\end{remark}

\subsection{Exact i.i.d. fluctuation identities}

The initial sample contributes an error of order $N^{-1/2}$.  The same calculation also bounds the initial particle energy, which will keep the entire trajectory inside a controlled energy sublevel.

\begin{proposition}[Initial empirical fluctuation]
\label{prop:initial-fluctuation}
Let $X_1,\ldots,X_N$ be i.i.d. with law $\rho$, let $\mu^N=N^{-1}\sum_i\delta_{X_i}$, and assume $\int\G_{r,\pi}(x,x)\,\rho(\dd x)<\infty$.  Then
\begin{equation}
\label{eq:initial-relative-fluctuation}
\E\D_{r,\pi}(\mu^N,\rho)^2
=
\frac1N\left[
\int_{\X}\G_{r,\pi}(x,x)\,\rho(\dd x)
-\D_{r,\pi}(\rho,\pi)^2
\right].
\end{equation}
Moreover,
\begin{equation}
\label{eq:initial-target-energy}
\E\D_{r,\pi}(\mu^N,\pi)^2
=
\left(1-\frac1N\right)\D_{r,\pi}(\rho,\pi)^2
+\frac1N\int_{\X}\G_{r,\pi}(x,x)\,\rho(\dd x).
\end{equation}
\end{proposition}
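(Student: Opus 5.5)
We expand the kernel energy of the centered signed measure $\mu^N-\rho$ using the bilinear form \eqref{eq:D-spectral}, which is legitimate because the integrability condition $\int\G_{r,\pi}(x,x)^{1/2}\,|\sigma|(\dd x)<\infty$ holds for $\sigma=\mu^N-\rho$. Indeed, $\int\G_{r,\pi}(x,x)\,\rho(\dd x)<\infty$ and Cauchy--Schwarz give the $\rho$ part, and the empirical part is finite almost surely. The cleanest route is spectral. Write $\sigma(\varphi_n)=\frac1N\sum_i\xi_{i,n}$ with $\xi_{i,n}=\varphi_n(X_i)-\rho(\varphi_n)$. These are i.i.d. and centered, so
\[
\E|\sigma(\varphi_n)|^2=\frac1N\operatorname{Var}_\rho(\varphi_n)=\frac1N\bigl(\rho(\varphi_n^2)-\rho(\varphi_n)^2\bigr).
\]
Multiplying by $q_r(\lambda_n)$ and summing over $n\ge1$ gives \eqref{eq:initial-relative-fluctuation}. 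The swap of expectation and sum is justified by Tonelli, since all terms are nonnegative. Two identifications finish this step. First, $\sum_n q_r(\lambda_n)\rho(\varphi_n^2)=\int\G_{r,\pi}(x,x)\,\rho(\dd x)$, again by Tonelli and the diagonal series \eqref{eq:G-def}. Second, $\sum_n q_r(\lambda_n)\rho(\varphi_n)^2=\D_{r,\pi}(\rho,\pi)^2$ by \eqref{eq:D-spectral} applied to $\rho-\pi$, using $\pi(\varphi_n)=0$.

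An equivalent kernel-side computation expands $\iint\G_{r,\pi}\,\dd(\mu^N-\rho)^{\otimes2}$ into $N^{-2}\sum_{i,j}\G_{r,\pi}(X_i,X_j)$, minus the cross terms, plus the $\rho\otimes\rho$ term. For $i\ne j$, independence gives $\E\G_{r,\pi}(X_i,X_j)=\iint\G_{r,\pi}\,\dd\rho\,\dd\rho$, while the $N$ diagonal terms give $\int\G_{r,\pi}(x,x)\,\rho(\dd x)$. The bookkeeping yields $\frac1N\bigl[\int\G_{r,\pi}(x,x)\,\dd\rho-\iint\G_{r,\pi}\,\dd\rho\,\dd\rho\bigr]$. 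The double integral equals $\D_{r,\pi}(\rho,\pi)^2$ because $\G_{r,\pi}$ is centered against $\pi$ by \eqref{eq:G-centered}. Here the integrability needed for Fubini is $|\G_{r,\pi}(x,y)|\le \G_{r,\pi}(x,x)^{1/2}\G_{r,\pi}(y,y)^{1/2}$, which follows from positive semidefiniteness \eqref{eq:G-positive}.

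For \eqref{eq:initial-target-energy}, decompose $\mu^N-\pi=(\mu^N-\rho)+(\rho-\pi)$. The discrepancy is a Hilbert norm of kernel means, so squaring gives the two squared terms plus a cross term $2\langle\mu^N-\rho,\rho-\pi\rangle$. In spectral form, the cross term is $2\sum_n q_r(\lambda_n)\sigma(\varphi_n)\rho(\varphi_n)$, and its expectation vanishes because $\E\sigma(\varphi_n)=0$. Interchanging sum and expectation is justified by Cauchy--Schwarz in $\ell^2(q_r)$ together with the finite second moment just computed. Adding $\D_{r,\pi}(\rho,\pi)^2$ to \eqref{eq:initial-relative-fluctuation} gives $(1-\frac1N)\D_{r,\pi}(\rho,\pi)^2+\frac1N\int\G_{r,\pi}(x,x)\,\dd\rho$.

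The only delicate point is the justification of these interchanges. All summands in the first identity are nonnegative, which is why we take the spectral route there. Everything else reduces to finiteness of $\int\G_{r,\pi}(x,x)\,\dd\rho$.
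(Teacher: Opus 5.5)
Your proposal is correct and is essentially the paper's argument. Your kernel-side computation (separating the diagonal from the off-diagonal terms, using independence for $i\ne j$, and using centering to identify $\iint\G_{r,\pi}\,\dd\rho\,\dd\rho=\D_{r,\pi}(\rho,\pi)^2$) is exactly the paper's proof, and your primary spectral route is the same computation written mode by mode, with two differences that do not matter. First, Tonelli makes your interchanges explicit, whereas the paper leaves the Fubini step implicit; your bound $|\G_{r,\pi}(x,y)|\le\G_{r,\pi}(x,x)^{1/2}\G_{r,\pi}(y,y)^{1/2}$ supplies that step. Second, for the second identity the paper expands $\D_{r,\pi}(\mu^N,\pi)^2=N^{-2}\sum_{i,j}\G_{r,\pi}(X_i,X_j)$ directly via centering, while you split around $\rho$ and show the expected cross term vanishes; the two are equivalent.
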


\begin{proof}
Expand the quadratic energy and separate the diagonal terms $i=j$ from the off-diagonal terms $i\ne j$.  Independence gives
\[
\E\G_{r,\pi}(X_i,X_j)
=
\iint\G_{r,\pi}(x,y)\,\rho(\dd x)\rho(\dd y)
=\D_{r,\pi}(\rho,\pi)^2
\]
for $i\ne j$, where the final equality uses the centering \eqref{eq:G-centered}.  Subtracting the deterministic cross terms gives \eqref{eq:initial-relative-fluctuation}; retaining the target terms gives \eqref{eq:initial-target-energy}.
\end{proof}

In the Euclidean case, \eqref{eq:initial-target-energy} supplies the moment bound for the random energy sublevel used in the passage to $W_1$.

\section{Finite-time comparison and the all-time argument}
\label{sec:relative-stability}

We now combine the two dissipation laws with a finite-time particle comparison.  Choose a deterministic time $T$: up to $T$, compare the two flows directly; after $T$, compare each flow with the target.  Population relaxation makes the second comparison small, and target-energy monotonicity preserves it for every later time.  This division at a single time is the reason a finite-horizon estimate can yield an all-time conclusion.

\subsection{The finite-time input}
\label{sec:relative-form-pairing}

We use the finite-time comparison in \Cref{ass:GB}:
\begin{equation}
\label{eq:finite-horizon-pathwise}
\E\sup_{0\le t\le T}\D_{r,\pi}(\mu_t^N,\rho_t)^2
\le \frac{C_T}{N},\qquad T<\infty.
\end{equation}
The supremum inside the expectation controls the whole initial interval.  On a compact manifold, the coupling proof in \Cref{prop:compact-transport,app:finite-mode-mean-field} gives $C_T\le C_0e^{C_1T}$, uniformly in $N$ and $T$.  The qualitative argument needs only that $C_T$ be finite for each fixed $T$.

\subsection{The stitching lemma}

\begin{lemma}[Stitching]
\label{lem:stitching}
For every $T\ge0$ and $t\ge T$,
\begin{equation}
\label{eq:stitching}
\D_{r,\pi}(\mu_t^N,\rho_t)
\le \D_{r,\pi}(\mu_T^N,\rho_T)
+2\D_{r,\pi}(\rho_T,\pi).
\end{equation}
\end{lemma}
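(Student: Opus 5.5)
The plan is to pass through the target $\pi$ using the triangle inequality for $\D_{r,\pi}$, and then apply the target-energy monotonicity of \Cref{prop:common-energy} separately to each flow. First we note that $\D_{r,\pi}$ is a (pseudo)metric on the relevant probability measures. By the dual definition \eqref{eq:D-def}, $\D_{r,\pi}(\sigma)$ is a supremum of $|\sigma(\phi)|$ over a fixed test class. It is therefore a seminorm in the signed measure $\sigma$, and $\D_{r,\pi}(\mu,\nu)=\D_{r,\pi}(\mu-\nu)$ satisfies the triangle inequality and symmetry. Alternatively, on the kernel-integrable domain it is the Hilbert norm in $\mathcal H_{r,\pi}^*$ given by \eqref{eq:D-spectral}. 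The empirical measures (finite diagonal) and the bounded-density population laws both lie in this domain.

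Fix $t\ge T$. The triangle inequality gives
\[
\D_{r,\pi}(\mu_t^N,\rho_t)\le \D_{r,\pi}(\mu_t^N,\pi)+\D_{r,\pi}(\rho_t,\pi).
\]
We apply \eqref{eq:target-energy-monotone} to the empirical solution, which is an instance of \eqref{eq:common-continuity}, to get $\D_{r,\pi}(\mu_t^N,\pi)\le\D_{r,\pi}(\mu_T^N,\pi)$. We apply it to the population solution to get $\D_{r,\pi}(\rho_t,\pi)\le\D_{r,\pi}(\rho_T,\pi)$. Then the triangle inequality at time $T$ gives $\D_{r,\pi}(\mu_T^N,\pi)\le\D_{r,\pi}(\mu_T^N,\rho_T)+\D_{r,\pi}(\rho_T,\pi)$. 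Summing these bounds yields \eqref{eq:stitching}. The estimate holds pathwise, for every initial configuration, so it survives taking $\sup_{t\ge T}$ and then expectations later.

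The only point requiring care is that monotonicity is needed pointwise for all $t\ge T$, not only almost everywhere. \Cref{prop:common-energy} is stated in integrated form, which gives monotonicity for all $s\le t$ once $t\mapsto\D_{r,\pi}(\mu_t,\pi)^2$ is absolutely continuous. For the empirical flow this is clear: the energy is the finite sum $N^{-2}\sum_{i,j}\G_{r,\pi}(X_i,X_j)$ of a $C^2$ kernel evaluated along $C^1$ trajectories. For the population flow it follows from the classical regularity in \ref{ass:GB-pop}. We also check that the discrepancy is finite at both times, which follows from \eqref{eq:empirical-energy-finite} and the density bound \eqref{eq:ratio-bound}, so the subtraction-free inequalities are meaningful. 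No comparison between the two flows after time $T$ is required.
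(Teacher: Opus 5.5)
Your proposal is correct and is the paper's proof: the triangle inequality through $\pi$, target-energy monotonicity from \Cref{prop:common-energy} applied to each flow between $T$ and $t$, and the triangle inequality again at time $T$. Your extra checks are sound justifications that the paper leaves implicit; these are that the energy is absolutely continuous, so monotonicity holds at every pair of times, and that the discrepancies are finite at both times.
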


\begin{proof}
The triangle inequality and \Cref{prop:common-energy} give
\begin{align*}
\D_{r,\pi}(\mu_t^N,\rho_t)
&\le\D_{r,\pi}(\mu_t^N,\pi)+\D_{r,\pi}(\rho_t,\pi)\\
&\le\D_{r,\pi}(\mu_T^N,\pi)+\D_{r,\pi}(\rho_T,\pi)\\
&\le\D_{r,\pi}(\mu_T^N,\rho_T)+2\D_{r,\pi}(\rho_T,\pi).
\end{align*}
\end{proof}

The lemma uses the particle dynamics only through target-energy monotonicity.  It therefore applies even when the particles do not approach the target for a fixed $N$.

\subsection{Uniform-in-time convergence in discrepancy}

For $T>0$, \Cref{lem:stitching} and the population bound give
\begin{equation}
\label{eq:abstract-stitch-preopt}
\E\sup_{t\ge0}\D_{r,\pi}(\mu_t^N,\rho_t)
\le \sqrt{C_T/N}
+2\sqrt{\frac{\KL(\rho_0\mid\pi)}{\lambda_1T}}.
\end{equation}
Indeed, the bound after $T$ is at most the supremum on $[0,T]$ plus the population error displayed above.  Keep $T$ fixed while sending $N\to\infty$, so the finite-time error vanishes; then send $T\to\infty$ to remove the population error.  This proves \eqref{eq:abstract-alltime-D} without any growth assumption on $C_T$.

For the compact theorem, \Cref{prop:compact-transport} gives
\begin{equation}
\label{eq:quantitative-stitch-preopt}
\E\sup_{t\ge0}\D_{r,\pi}(\mu_t^N,\rho_t)
\le \frac{C}{\sqrt N}e^{CT}+CT^{-1/2}.
\end{equation}
The quantitative bound balances a comparison error that grows with $T$ against a population error that decays with $T$.  Choose $T=c\log(2+N)$ with a fixed sufficiently small $c>0$.  The first term then decays algebraically in $N$, and the second gives
\begin{equation}
\label{eq:quantitative-D-log}
\E\sup_{t\ge0}\D_{r,\pi}(\mu_t^N,\rho_t)
\le C[\log(2+N)]^{-1/2}.
\end{equation}
An enlargement of $C$ covers every $N\ge1$.

\subsection{From discrepancy to Wasserstein distance}

On a compact manifold, \Cref{prop:compact-interpolation} proves
\begin{equation}
\label{eq:D-to-W-compact}
\Wone(\mu,\nu)\le C\D_{r,\pi}(\mu,\nu)^{1/(r+2)}.
\end{equation}
Apply this inequality pathwise to the time supremum and then use Jensen's inequality.  Together with \eqref{eq:quantitative-D-log}, this yields \eqref{eq:compact-W-rate}.

For the abstract theorem, interpolation is replaced by compactness on energy sublevels.  This requires two steps: establish a uniform comparison on each fixed sublevel, then control the random level selected by the initial sample.  Let
\[
\mathcal E_R=\{\nu\in\Pcal_1(\X):\D_{r,\pi}(\nu,\pi)\le R\}.
\]
By the compactness, lower semicontinuity, and separation hypotheses in \Cref{ass:GB}, there is a bounded nondecreasing modulus $\omega_R$ with $\omega_R(a)\to0$ as $a\downarrow0$ such that
\begin{equation}
\label{eq:sublevel-modulus}
\Wone(\mu,\nu)\le\omega_R(\D_{r,\pi}(\mu,\nu)),
\qquad\mu,\nu\in\mathcal E_R.
\end{equation}
To prove this, otherwise choose two sequences in $\mathcal E_R$ whose discrepancies tend to zero while their $W_1$ distances stay positive.  Compactness gives convergent subsequences.  Lower semicontinuity puts their limits in $\mathcal E_R$ and makes their limiting discrepancy zero; separation makes the limits equal, a contradiction.  Boundedness of the modulus follows from compactness.

To apply this deterministic modulus to the random particle path, write $R_N=\D_{r,\pi}(\mu_0^N,\pi)$ and $R_0=\D_{r,\pi}(\rho_0,\pi)$.  The initial fluctuation estimate implies
\begin{equation}
\label{eq:random-energy-moment}
\sup_N\E R_N^2<\infty.
\end{equation}
Target-energy monotonicity puts both entire trajectories in $\mathcal E_R$ on $\{R_N\le R\}$ whenever $R\ge R_0$.  On this event \eqref{eq:sublevel-modulus}, \eqref{eq:abstract-alltime-D}, and bounded convergence in probability give convergence to zero of the expected time supremum of $W_1$.

It remains to control samples outside the chosen sublevel.  This is where the moment hypothesis in \Cref{ass:GB} enters: on an unbounded state space,
\[
\sup_{t\ge0}\Wone(\mu_t^N,\rho_t)^2\le C(1+R_N+R_0).
\]
Cauchy--Schwarz and \eqref{eq:random-energy-moment} therefore bound the expectation on $\{R_N>R\}$ by $C/R$, uniformly in $N$.  On a compact space the diameter bound gives the same conclusion directly.  Send $N\to\infty$ and then $R\to\infty$ to prove \eqref{eq:abstract-alltime-W1}.

\subsection{Last iterates and labelled marginals}

The uniform comparison allows the observation time to grow at any rate with $N$.  Indeed, the population stays in $\mathcal E_{R_0}$, so its discrepancy decay and \eqref{eq:sublevel-modulus} imply $\rho_t\to\pi$ in $W_1$.  For any deterministic $t_N\to\infty$,
\begin{equation}
\label{eq:last-iterate-triangle}
\Wone(\mu_{t_N}^N,\pi)
\le\Wone(\mu_{t_N}^N,\rho_{t_N})+\Wone(\rho_{t_N},\pi).
\end{equation}
Taking expectations proves \eqref{eq:abstract-last}.  The empirical-to-labelled comparison in \Cref{prop:labelled-comparison} proves \eqref{eq:abstract-labelled} and the corresponding statement uniformly in time with target $\rho_t$.

\begin{proof}[Proof of \Cref{thm:abstract-main}]
The admissibility inputs justify \Cref{prop:entropy-dissipation} and hence the population bound.  Stitching gives the all-time discrepancy limit, the energy-sublevel argument gives its $W_1$ counterpart, and \eqref{eq:last-iterate-triangle} gives convergence at every diverging observation time.  Finally, \Cref{prop:labelled-comparison} transfers the empirical conclusions to each fixed number of labelled particles.
\end{proof}

\section{Compact connected manifolds}
\label{sec:compact-verification}

On a compact connected manifold, the geometric assumptions supply every input to the all-time argument.  Smoothness and a sufficiently large $r$ give a regular interaction and well-posed flows; coupling gives the finite-time estimate; heat smoothing converts discrepancy to $W_1$.  Compactness supplies the moment control.  We verify these inputs in that order and then assemble \Cref{thm:compact-main}.

\subsection{A nonoptimal regularity threshold}

Let
\begin{equation}
\label{eq:md-rstar}
m_d=2d+4,
\qquad
r_\star(d)=m_d+\left\lceil\frac d2\right\rceil+3.
\end{equation}
The threshold is chosen to supply all the differentiated kernel bounds and population smoothing estimates with a single condition on $r$.  We use this common threshold for convenience and do not optimize it.

\begin{proposition}[Compact Green--Bessel regularity]
\label{prop:compact-regularity}
Let $\X$ be a compact connected smooth $d$-manifold without boundary, let $\pi$ have a smooth strictly positive density, and let $r\ge r_\star(d)$.  Then:
\begin{enumerate}[label=\textup{(\roman*)}]
\item $\A_\pi$ has compact resolvent and a positive spectral gap on $L^2_0(\pi)$;
\item $\Q_{r,\pi}$ is a classical pseudodifferential operator of order $-2r-2$ on the centered subspace;
\item $\G_{r,\pi}$ and $\K_{r,\pi}$ have the derivatives required in \Cref{ass:GB-kernel}, the force in \eqref{eq:particle-system} is globally Lipschitz, and the differentiated eigenfunction series converge uniformly;
\item for every integer $0\le j\le m_d$,
\begin{align}
\label{eq:compact-feature-sum-v}
\sup_{x\in\X}
\sum_{n\ge1}q_r(\lambda_n)
\abs{\nabla^{j+1}\varphi_n(x)}^2&<\infty,\\
\label{eq:compact-feature-sum-div}
\sup_{x\in\X}
\sum_{n\ge1}\lambda_n^2q_r(\lambda_n)
\abs{\nabla^{j}\varphi_n(x)}^2&<\infty.
\end{align}
\end{enumerate}
\end{proposition}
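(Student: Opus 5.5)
We establish the four items in order, each resting on standard elliptic theory on a closed manifold. The central observation is that $\A_\pi=-\Delta-\nabla\log\pi\cdot\nabla$ is a second-order elliptic differential operator with smooth coefficients. Its principal symbol is $\abs{\xi}^2_g$, and it is symmetric in $L^2(\pi)$, where $\pi$ is a smooth positive multiple of Riemannian volume.

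\emph{Item (i).} Compactness of the resolvent follows from elliptic regularity $\Dom(\A_\pi)=H^2(\X)$ together with Rellich compactness of $H^2\hookrightarrow L^2$. For the gap, suppose $\A_\pi f=0$. Then $\int\abs{\nabla f}^2\,\dd\pi=0$, and since $\X$ is connected, $f$ is constant. Hence $\ker\A_\pi$ consists of the constants, and discreteness of the spectrum gives $\lambda_1>0$ on $L^2_0(\pi)$. By elliptic regularity the eigenfunctions are smooth.

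\emph{Item (ii).} By Seeley's theorem, complex powers of the positive elliptic operator $\Id+\A_\pi$ are classical pseudodifferential operators, so $(\Id+\A_\pi)^{-r}$ has order $-2r$. For the Green factor we use the parametrix $\A_\pi^{-1}=(\A_\pi+P_0)^{-1}-P_0$ on $L^2_0$, where $P_0$ is the smoothing projection onto constants. Since $\A_\pi+P_0$ is invertible and elliptic of order $2$, this exhibits $\A_\pi^{-1}$ as a classical pseudodifferential operator of order $-2$ up to a smoothing remainder. Composing the two factors gives $\Q_{r,\pi}$ of order $-2r-2$.

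\emph{Item (iv).} This is the core estimate, and we prove it by spectral cluster bounds. Sobolev embedding and elliptic estimates give
\[
\norm{\nabla^k\varphi_n}_{L^\infty}\le C_k(1+\lambda_n)^{(k+s)/2},\qquad s>d/2,
\]
and we take $s=\lceil d/2\rceil+1$. Weyl's law gives $\#\{n:\lambda_n\le\Lambda\}\lesssim\Lambda^{d/2}$. Combining these, the sum in \eqref{eq:compact-feature-sum-v} is bounded by
\[
\sum_n\lambda_n^{-1}(1+\lambda_n)^{-r+j+1+s}.
\]
The sum in \eqref{eq:compact-feature-sum-div} is bounded by
\[
\sum_n(1+\lambda_n)^{-r+1+j+s}.
\]
Both converge provided $r-j-2-s>d/2$. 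With $j\le m_d$ this holds whenever $r\ge m_d+\lceil d/2\rceil+3$, because $r_\star$ contains an extra margin beyond $s$ to absorb the Weyl exponent. A sharper route would use the local Weyl law $\sum_{\lambda_n\le\Lambda}\abs{\nabla^k\varphi_n(x)}^2\lesssim\Lambda^{d/2+k}$, but it is not needed because the threshold is deliberately nonoptimal. The main obstacle is bookkeeping the exponents so that the single threshold $r_\star(d)$ suffices. If the crude $L^\infty$ bound loses too much, the fallback is to replace it by the local Weyl law, which follows from the pseudodifferential description of item (ii) and the kernel of $\mathbf 1_{[0,\Lambda]}(\A_\pi)$.

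\emph{Item (iii).} Uniform convergence of the differentiated series for $\G_{r,\pi}$ and $\K_{r,\pi}$ follows from (iv) by Cauchy--Schwarz in $n$. For example, at derivative orders $a+b\le 2m_d$ we have
\[
\sum_n q_r(\lambda_n)\abs{\nabla^a\varphi_n(x)}\abs{\nabla^b\varphi_n(y)}\le\Big(\sum_n q_r(\lambda_n)\abs{\nabla^a\varphi_n(x)}^2\Big)^{1/2}\Big(\sum_n q_r(\lambda_n)\abs{\nabla^b\varphi_n(y)}^2\Big)^{1/2}.
\]
The tails of these sums go to zero uniformly in $x$ and $y$, so the Weierstrass test applies. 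The same bounds give termwise differentiability, which justifies \Cref{prop:intertwining}, continuity of point and derivative evaluations on $\mathcal H_{r,\pi}$, and centering. Density of $C^\infty$ in $\mathcal H_{r,\pi}$ follows because finite eigenfunction expansions are smooth. Finally, the force $x\mapsto\nabla_x\G_{r,\pi}(x,y)$ has bounded second derivatives uniformly in $y$, since $C^2\times C^0$ bounds are covered by (iv). It is therefore globally Lipschitz on the compact manifold, uniformly in the measure argument.
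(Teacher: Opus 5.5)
\textbf{Gap in item (iv): the exponent count does not reach $r_\star$.} Your per-mode Sobolev bound makes the terms of \eqref{eq:compact-feature-sum-div} of size $\lambda_n^2q_r(\lambda_n)(1+\lambda_n)^{j+s}\asymp\lambda_n^{1+j+s-r}$. With Weyl asymptotics $\lambda_n\asymp n^{2/d}$, this majorant is summable exactly when $r>1+j+s+d/2$.

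At $j=m_d$ and your choice $s=\lceil d/2\rceil+1$, this means $r>r_\star-1+d/2$. That excludes $r=r_\star$ for every $d\ge2$. Your own displayed condition $r-j-2-s>d/2$ is stronger still. Since $r_\star-m_d-2-s=0$ exactly, it demands $r>r_\star+d/2$ and fails at $r=r_\star$ in every dimension. So the claimed ``extra margin beyond $s$'' does not exist.

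No choice of $s$ rescues the route in general. Even letting $s\downarrow d/2$ through fractional Sobolev spaces, you need $r>m_d+d+1$, and $r_\star=m_d+\lceil d/2\rceil+3$ does not exceed $m_d+d+1$ once $d\ge4$. The loss is structural. You bound each $\abs{\nabla^k\varphi_n(x)}^2$ by its worst case $\lambda_n^{k+s}$ and then multiply by the eigenvalue count. This charges the dimension twice and costs a factor $\lambda^{s}$ with $s>d/2$.

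\textbf{What is needed.} The paper proves (iv) with the differentiated local Weyl bound \eqref{eq:local-weyl-derivative}, $\sup_x\sum_{\lambda_n\le L}\abs{\nabla^k\varphi_n(x)}^2\le C_kL^{d/2+k}$. This controls whole spectral shells rather than individual modes. Dyadic summation over $2^iL<\lambda_n\le 2^{i+1}L$ then needs only $r>d/2+j$ for \eqref{eq:compact-feature-sum-v} and $r>d/2+j+1$ for \eqref{eq:compact-feature-sum-div}. The threshold $r_\star$ satisfies both with margin $\lceil d/2\rceil+2-d/2\ge2$ for all $j\le m_d$.

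You list this bound as a fallback but say it is not needed. At the stated threshold it is essential, and it does not follow from item (ii) alone. You can either cite a differentiated local Weyl law, as the paper does, or derive it from heat-kernel estimates. For the latter, use $\sum_{\lambda_n\le L}\abs{\nabla^k\varphi_n(x)}^2\le e\sum_n e^{-\lambda_n/L}\abs{\nabla^k\varphi_n(x)}^2$. The right side is the diagonal of $\nabla_x^k\nabla_y^k$ applied to the kernel of $e^{-\A_\pi/L}$. Standard short-time derivative bounds on a closed manifold make it $O(L^{d/2+k})$ for $L\ge1$.

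With (iv) repaired, your items (i)--(iii) follow the paper's argument. One precision in (iii): global Lipschitz continuity of the particle field in $(X_1,\dots,X_N)$ also needs $\nabla_x\G_{r,\pi}(x,y)$ to be Lipschitz in the source point $y$. The same Cauchy--Schwarz split, with one derivative in each variable, gives this.
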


\begin{proof}
The operator $\A_\pi$ is uniformly elliptic with smooth coefficients.  Its Friedrichs realization is self-adjoint with compact resolvent.  Connectedness and \eqref{eq:weighted-ibp} imply that the zero eigenspace consists exactly of constants, so the first nonzero eigenvalue is positive.  Elliptic regularity makes the eigenfunctions smooth, and finite sums of centered eigenfunctions are dense in $\mathcal H_{r,\pi}$ by the spectral theorem.

By the functional calculus for elliptic operators, $\Q_{r,\pi}=\A_\pi^{-1}(\Id+\A_\pi)^{-r}$ is classical of order $-2r-2$ on the orthogonal complement of constants; see \citet{seeley1967complex,hormander1985analysis3}.  The differentiated local Weyl law \citep[Theorem~1.1, with $s=0$ and order $m=2$]{rivera2022weighted}, applied to $\Id+\A_\pi$ and then omitting the constant eigenfunction, gives for $L\ge\max(1,\lambda_1)$
\begin{equation}
\label{eq:local-weyl-derivative}
\sup_{x\in\X}
\sum_{\lambda_n\le L}
\abs{\nabla^k\varphi_n(x)}^2
\le C_k L^{d/2+k}.
\end{equation}
A dyadic summation of \eqref{eq:local-weyl-derivative}, using $q_r(\lambda)\asymp\lambda^{-r-1}$ at high frequency, proves \eqref{eq:compact-feature-sum-v} when $r>d/2+j$, and \eqref{eq:compact-feature-sum-div} when $r>d/2+j+1$.  The choice \eqref{eq:md-rstar} is stronger than both requirements for $j\le m_d$.

These summable derivative bounds give uniform convergence of the scalar and matrix kernel series, with derivatives split between $x$ and $y$ by Cauchy--Schwarz.  In particular, $\G_{r,\pi}$ has finite diagonal, $\nabla_x\G_{r,\pi}$ is continuously differentiable in both variables, and the particle force is globally Lipschitz.  They also justify the termwise divergence in \Cref{prop:intertwining}.  The tail calculation is recorded in \Cref{app:compact-analysis}.
\end{proof}

\subsection{Population well-posedness}

The interaction $b(x,y)=-\nabla_x\G_{r,\pi}(x,y)$ is bounded and continuously differentiable on $\X\times\X$.  In particular, for every probability measure $\mu$,
\begin{equation}
\label{eq:compact-velocity-uniform}
\norm{v_\mu}_{C^1}+\norm{\divpi v_\mu}_{L^\infty}\le C.
\end{equation}
The constants depend only on the compact geometry and kernel, and the same bounds hold uniformly for every truncation.  These bounds control the characteristic flow; the additional smoothing of $\Q_{r,\pi}$ propagates smoothness of the density.

\begin{proposition}[Compact population well-posedness]
\label{prop:compact-pop-wp}
Under the assumptions of \Cref{prop:compact-regularity}, every initial ratio satisfying \eqref{eq:compact-initial} generates a unique global classical solution $\rho_t=f_t\pi$ of \eqref{eq:population}.  Nonnegativity and mass are preserved.  The same statement holds for every exact spectral truncation.
\end{proposition}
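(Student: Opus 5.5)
We treat the population equation \eqref{eq:population} as a nonlocal transport equation with a velocity that is uniformly smooth, and we prove well-posedness by a fixed point on the velocity field. The density ratio is then propagated along characteristics.

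\emph{Step 1: the fixed-point map.} Fix a horizon $T$. Let $\mathcal C_T$ be the set of weakly continuous curves $t\mapsto\mu_t$ of probability measures on $[0,T]$, with the metric $\sup_t\D_{r,\pi}(\mu_t,\nu_t)$. One could instead use $\sup_t W_1$, since $\nabla_x\G_{r,\pi}$ is $C^1$ in $y$ and so $W_1$ controls the velocity difference. For a given curve, set $v_t=-\nabla\PhiPot_{\mu_t-\pi}$. By \eqref{eq:compact-velocity-uniform} this field is $C^1$ in space, uniformly in $t$ and in the measure, and it is measurable in $t$. It therefore generates a flow of diffeomorphisms $\Psi_t$. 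Define the map by $\Gamma(\mu)_t=(\Psi_t)_\#\rho_0$.

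\emph{Step 2: contraction.} Since $b(x,y)=-\nabla_x\G_{r,\pi}(x,y)$ is $C^1$ in both variables, $\norm{v^\mu_t-v^\nu_t}_{L^\infty}\le C\,W_1(\mu_t,\nu_t)$. Gronwall applied to the two flows gives
\[
\sup_{s\le t}W_1(\Gamma(\mu)_s,\Gamma(\nu)_s)\le Ce^{Ct}\int_0^t W_1(\mu_s,\nu_s)\,\dd s.
\]
Iterating this bound (or working on short intervals) yields a unique fixed point on $[0,T]$. The constants do not depend on the data, so the solution extends to all times. This gives global existence and uniqueness among measure-valued solutions whose velocity is induced by the kernel. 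Mass conservation and nonnegativity hold automatically, because the solution is a pushforward of the probability measure $\rho_0$.

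\emph{Step 3: classical regularity.} Write $\rho_t=f_t\pi$. Since the solution is a pushforward under diffeomorphisms, $f_t$ satisfies the characteristic equation \eqref{eq:ratio-characteristic}. This gives $0\le f_t\le e^tM_0$ as in \eqref{eq:ratio-bound}. For smoothness, note that $v_t=-\nabla\Q_{r,\pi}(f_t-1)$. Because $\Q_{r,\pi}$ has order $-2r-2$ by \Cref{prop:compact-regularity}, an $L^\infty$ bound on $f_t$ gives $v_t\in C^{k}$ for every $k\le 2r+1-d/2$ roughly; this is far beyond the $m_d$ derivatives needed. We then differentiate the transport equation \eqref{eq:ratio-equation} $k$ times and run energy or Gronwall estimates on $\norm{f_t}_{C^k}$. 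The top-order terms involve $\nabla^k v_t$, which the smoothing controls in terms of $\norm{f_t}_{L^\infty}$, so the estimates are linear and close on every finite interval. Time regularity then follows from the equation. As a result $f_t$ is $C^1$ in $(t,x)$, with as many spatial derivatives as $f_0$ has, up to the order that $\Q$ allows. This makes the solution classical. Classical solutions are in particular measure solutions, so uniqueness is inherited from Step 2.

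\emph{Step 4: truncations.} For an exact spectral truncation, $q_{r,\Lambda}\le q_r$ and the sums are finite. The same bounds \eqref{eq:compact-feature-sum-v}--\eqref{eq:compact-feature-sum-div} and \eqref{eq:compact-velocity-uniform} therefore hold uniformly in $\Lambda$. The characteristic identity becomes $\divpi v=(\Id+\A_\pi)^{-r}\chi(\A_\pi/\Lambda)\nu$. This is where truncation needs care: the multiplier $\chi(\cdot/\Lambda)$ need not preserve positivity. Nonnegativity still holds, however, because $f_t$ is transported and multiplied by $\exp(-\int\divpi v)$, which is positive regardless of sign. The upper bound becomes $e^{Ct}M_0$, using the uniform $L^\infty$ bound on $\divpi v$.

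The main obstacle is Step 3: propagating classical smoothness without derivative loss. I expect it to go through because the velocity gains $2r+1$ derivatives over the density, so commutator terms never exceed the order being estimated.
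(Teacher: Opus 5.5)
Your proposal is correct and follows essentially the same route as the paper. The paper also uses a contraction for the characteristic fixed-point map to get a unique global measure solution, then propagates the density ratio along characteristics with Gr\"onwall estimates on $C^k$ norms, and treats the truncations via $0\le\chi\le1$ with nonnegativity coming from the characteristic formula. The one difference is that you close the derivative estimates using only $L^\infty$ control of $f_t$, so you propagate finitely many derivatives, whereas the paper inducts with \eqref{eq:compact-pop-smoothing} (the $C^{k+1}$ norm of the velocity controlled by $\norm{f}_{C^{k-1}}$) to control every $C^k$ norm; either version gives a classical solution.
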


\begin{proof}
First construct the flow at the level of measures.  The bounded Lipschitz interaction makes the characteristic fixed-point map a contraction on a sufficiently short interval: differences of characteristic flows are bounded by the time integral of their spatial difference and the $W_1$ difference of the proposed measure paths.  Iteration gives uniqueness on every finite interval, and compactness and the uniform velocity bound extend the solution globally.

It remains to show that smooth initial densities stay smooth on every finite interval.  The same construction gives a local smooth solution, whose relative density satisfies
\[
(\partial_t+v_{\rho_t}\cdot\nabla)f_t=-f_t\divpi v_{\rho_t}.
\]
Consequently $f_t\ge0$, mass is conserved, and
$\norm{f_t}_{L^\infty}\le e^{Ct}\norm{f_0}_{L^\infty}$ by \eqref{eq:compact-velocity-uniform}.  Higher derivatives cannot blow up at a finite time.  More explicitly, elliptic Sobolev regularity for $\Q_{r,\pi}$, followed by Sobolev embedding, gives for every integer $k\ge1$
\begin{equation}
\label{eq:compact-pop-smoothing}
\norm{v_{f\pi}}_{C^{k+1}}+
\norm{\divpi v_{f\pi}}_{C^k}
\le C_k\bigl(1+\norm{f}_{C^{k-1}}\bigr).
\end{equation}
For example, $\Q_{r,\pi}:H^{k-1}\to H^{k+2r+1}$ and
$\A_\pi\Q_{r,\pi}:H^{k-1}\to H^{k+2r-1}$, and the threshold \eqref{eq:md-rstar} leaves more than $d/2$ derivatives for the stated embeddings.  Differentiating the transport equation $k$ times along characteristics bounds the upper time derivative of $\norm{f_t}_{C^k}$ by
$C_k\norm{f_t}_{C^k}+P_k(\norm{f_t}_{C^{k-1}})$ for a fixed polynomial $P_k$ with nonnegative coefficients.  Here the highest derivative of $f_t$ is multiplied only by the uniformly bounded first velocity derivatives or by $\divpi v_{\rho_t}$; all other terms are controlled by \eqref{eq:compact-pop-smoothing}.  Induction and Gronwall control every $C^k$ norm on every finite interval, extending the smooth solution globally.  Uniqueness of the measure solution identifies these extensions.

For a truncated operator, $0\le\chi\le1$ preserves the Sobolev bounds, so the same argument applies uniformly in the cutoff.  The characteristic formula also shows why nonnegativity suffices: zeros are transported by the flow.
\end{proof}

\subsection{Finite-time comparison by coupling}

The coupling estimate controls distances between particle positions.  To turn that estimate into discrepancy control, realize the kernel through its feature map:
\begin{equation}
\label{eq:compact-feature-map}
\Psi(x)=\bigl(\sqrt{q_r(\lambda_n)}\varphi_n(x)\bigr)_{n\ge1}\in\ell^2.
\end{equation}
The derivative sums of \Cref{prop:compact-regularity} imply that $\Psi$ is bounded and continuously differentiable as an $\ell^2$-valued map, and
\[
\D_{r,\pi}(\mu,\nu)=\left\|\int\Psi\,\dd(\mu-\nu)\right\|_{\ell^2}.
\]

\begin{proposition}[Compact finite-time coupling]
\label{prop:compact-transport}
There are constants $C_0,C_1<\infty$, independent of $N$ and $T\ge0$, such that
\begin{equation}
\label{eq:compact-finite-time}
\E\sup_{0\le t\le T}\D_{r,\pi}(\mu_t^N,\rho_t)^2
\le\frac{C_0}{N}e^{C_1T}.
\end{equation}
\end{proposition}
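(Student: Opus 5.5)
We will prove \eqref{eq:compact-finite-time} by a synchronous (Sznitman-type) coupling. Let $Y_1,\dots,Y_N$ be the i.i.d. initial points with law $\rho_0$. Write $\Theta_t$ for the characteristic flow of the population velocity, $\dot\Theta_t=v_{\rho_t}(\Theta_t)$, which exists by \Cref{prop:compact-pop-wp}. Define the auxiliary particles $\bar X_i(t)=\Theta_t(Y_i)$. These are i.i.d. with law $\rho_t$ for every $t$, and the true particles $X_i^N(t)$ start from the same $Y_i$. We measure the error with the feature map $\Psi$ of \eqref{eq:compact-feature-map}. Set $\bar\mu_t^N=N^{-1}\sum_i\delta_{\bar X_i(t)}$. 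Then
\[
\D_{r,\pi}(\mu_t^N,\rho_t)\le \D_{r,\pi}(\mu_t^N,\bar\mu_t^N)+\D_{r,\pi}(\bar\mu_t^N,\rho_t),
\]
and the first term is at most $\Lip(\Psi)\,\frac1N\sum_i d(X_i^N(t),\bar X_i(t))$. The $\ell^2$-Lipschitz bound for $\Psi$ follows from \eqref{eq:compact-feature-sum-v} with $j=0$. It therefore suffices to control the path distance $\delta(t)=\frac1N\sum_i d(X_i^N(t),\bar X_i(t))$ and the i.i.d. fluctuation term, each uniformly over $[0,T]$.

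\textbf{Distance step.} The force $b(x,y)=-\nabla_x\G_{r,\pi}(x,y)=-\sum_n q_r(\lambda_n)\varphi_n(y)\nabla\varphi_n(x)$ is globally Lipschitz by \Cref{prop:compact-regularity}(iii). Hence, with parallel transport on the manifold or working in the Nash embedding,
\[
\frac{\dd}{\dd t}\delta(t)\le L\,\delta(t)+L\,\D_{r,\pi}(\mu_t^N,\bar\mu_t^N)+\frac1N\sum_i\bigl|v_{\bar\mu_t^N}(\bar X_i)-v_{\rho_t}(\bar X_i)\bigr|.
\]
The dependence on the measure enters only through the moments $\mu(\varphi_n)$. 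We can therefore write $v_\mu(x)-v_\nu(x)=-\sum_n q_r(\lambda_n)\,(\mu-\nu)(\varphi_n)\nabla\varphi_n(x)$. By Cauchy--Schwarz, $\sup_x|v_\mu(x)-v_\nu(x)|\le C\,\D_{r,\pi}(\mu,\nu)$, and the constant is finite by \eqref{eq:compact-feature-sum-v}. So the last sum is at most $C\,\D_{r,\pi}(\bar\mu_t^N,\rho_t)$, and the middle term is at most $C\delta(t)$. Gronwall then gives
\[
\sup_{t\le T}\delta(t)\le Ce^{CT}\int_0^T\D_{r,\pi}(\bar\mu_s^N,\rho_s)\dd s .
\]

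\textbf{Fluctuation step.} It remains to show
\[
\E\sup_{t\le T}\D_{r,\pi}(\bar\mu_t^N,\rho_t)^2\le C(1+T)^2/N .
\]
This is the step I expect to be the main obstacle, because a supremum over $t$ sits inside the expectation. For each fixed $t$, \Cref{prop:initial-fluctuation} applies, since the $\bar X_i(t)$ are i.i.d. with law $\rho_t$ and the diagonal $\G_{r,\pi}(x,x)$ is bounded. This gives $\E\D_{r,\pi}(\bar\mu_t^N,\rho_t)^2\le C/N$. To pass the supremum inside, view $Z_t=N^{-1}\sum_i\bigl(\Psi(\Theta_t(Y_i))-\E\Psi(\Theta_t(Y_1))\bigr)$ as an average of i.i.d. centered $\ell^2$-valued processes, and apply the fundamental theorem of calculus in time:
\[
\sup_{t\le T}\|Z_t\|\le\|Z_0\|+\int_0^T\|\partial_sZ_s\|\dd s .
\]
Here $\partial_s Z_s$ is the average of the i.i.d. centered $\ell^2$ vectors $D\Psi(\bar X_i)v_{\rho_s}(\bar X_i)$ minus their mean. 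These vectors are bounded in $\ell^2$ by the $C^1$ bound on $\Psi$ and \eqref{eq:compact-velocity-uniform}, so $\E\|\partial_sZ_s\|^2\le C/N$. Minkowski's inequality in $L^2(\Omega)$ then gives
\[
\E\sup_{t\le T}\|Z_t\|^2\le C(1+T)^2/N .
\]

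\textbf{Assembly.} Combine the triangle inequality, the Lipschitz bound for $\Psi$, the Gronwall bound with Jensen's inequality, and the fluctuation estimate. This yields
\[
\E\sup_{t\le T}\D_{r,\pi}(\mu_t^N,\rho_t)^2\le Ce^{2CT}(1+T)^4/N ,
\]
and the polynomial factor is absorbed into $C_0e^{C_1T}$. The constants depend only on $\sup_x$ of the derivative sums in \Cref{prop:compact-regularity}(iv) and on \eqref{eq:compact-velocity-uniform}, so they are independent of $N$ and $T$. The same argument applies verbatim to exact spectral truncations, since $0\le\chi\le1$ only decreases those sums.
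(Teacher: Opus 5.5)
Your argument is correct, but the coupling step takes a different route from the paper's.

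\textbf{The paper's route.} The paper tracks the squared chord distance $R_N(t)=N^{-1}\sum_i|J(X_i)-J(\bar X_i)|^2$ after a smooth embedding $J$. It controls the per-particle force fluctuation $\zeta_i$ of \eqref{eq:empirical-force-fluctuation} by conditioning on $\bar X_i$. The other characteristics are then independent and centered, and the self-interaction contributes an extra deterministic $O(N^{-1})$ term. This gives $\E|\zeta_i|^2\le 4B^2/N$. The pathwise Gronwall inequality \eqref{eq:R-differential} then yields $\E\sup_{t\le T}R_N\le 4B^2Te^{(4L+1)T}/N$. Only the sampling error is handled by the time-FTC argument on the $\ell^2$ feature process, exactly as in your fluctuation step.

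\textbf{Your route.} You exploit that the velocity depends on the measure only through its kernel mean: $v_\mu(x)-v_\nu(x)=-\sum_n q_r(\lambda_n)(\mu-\nu)(\varphi_n)\nabla\varphi_n(x)$. Hence $\sup_x|v_\mu-v_\nu|\le C\D_{r,\pi}(\mu,\nu)$ by Cauchy--Schwarz and \eqref{eq:compact-feature-sum-v} with $j=0$. This folds the force fluctuation into the same quantity $\D_{r,\pi}(\bar\mu_t^N,\rho_t)$ that you control anyway. As a result, you need no conditioning argument and no separate bookkeeping of the diagonal self-interaction, which sits inside $\bar\mu_t^N$. Also, because the time integral lies outside the expectation after Gronwall, the coupling part needs only the fixed-time variance.

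\textbf{What each buys.} The paper's route is the generic Sznitman argument. It uses only boundedness and Lipschitz continuity of the pair interaction, so it would survive for interactions without this feature structure. Yours uses the specific Green--Bessel structure, but it is shorter and reduces both error sources to a single empirical process. Both give $Ce^{CT}/N$ once polynomial factors in $T$ are absorbed.

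\textbf{A minor point.} For the first-power distance, use the embedding version and differentiate the Lipschitz function $t\mapsto|J(X_i)-J(\bar X_i)|$ almost everywhere. The parallel-transport alternative would need extra care at the cut locus.
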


\begin{proof}
Insert independent population characteristics started at the same positions as the interacting particles.  This splits the error into a coupling error and an i.i.d. sampling error.  The first is controlled by the Lipschitz interaction and then transferred to discrepancy through the Lipschitz feature map.  For the second, bounds on the feature paths and their time derivatives control the supremum over $[0,T]$.  The resulting squared errors are $Ce^{CT}/N$ and $C(1+T^2)/N$, respectively.  \Cref{app:finite-mode-mean-field} proves these estimates, including the diagonal self-interaction and the manifold distance comparison, simultaneously for the full kernel and every cutoff.
\end{proof}

\subsection{Separation of probability measures}

To see directly why retaining every nonconstant mode distinguishes measures, suppose $\D_{r,\pi}(\mu,\nu)=0$.  Then
\begin{equation}
\label{eq:eigenmoments-equal}
(\mu-\nu)(\varphi_n)=0,
\qquad n\ge1.
\end{equation}
To pass from eigenfunctions to arbitrary continuous functions, let $P_s=e^{-s\A_\pi}$ be the Langevin heat semigroup.  For $\phi\in C(\X)$ and $s>0$, $P_s\phi$ is smooth and its spectral expansion converges uniformly.  By \eqref{eq:eigenmoments-equal},
\[
(\mu-\nu)(P_s\phi)=0.
\]
The Feller property and compactness give $P_s\phi\to\phi$ uniformly as $s\downarrow0$, hence $(\mu-\nu)(\phi)=0$.  Therefore $\mu=\nu$.

\begin{proposition}[Characteristicness on compact manifolds]
\label{prop:compact-separation}
Under the assumptions of \Cref{prop:compact-regularity}, $\D_{r,\pi}$ is a metric on $\Pcal(\X)$.
\end{proposition}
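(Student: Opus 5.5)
To prove \Cref{prop:compact-separation}, I would check the metric axioms for $\D_{r,\pi}$ on $\Pcal(\X)$, working mostly in the feature-map representation. On a compact manifold every probability measure satisfies the integrability condition for \eqref{eq:D-spectral}, because $\G_{r,\pi}(x,x)$ is bounded by \Cref{prop:compact-regularity}(iii). Hence
\[
\D_{r,\pi}(\mu,\nu)=\Bigl\|\int\Psi\,\dd(\mu-\nu)\Bigr\|_{\ell^2}<\infty
\]
with $\Psi$ as in \eqref{eq:compact-feature-map}. This identifies $\D_{r,\pi}$ with the norm, in $\ell^2$, of the difference of the kernel mean embeddings $\mu\mapsto\int\Psi\,\dd\mu$.

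From this representation the pseudometric properties are immediate: $\D_{r,\pi}$ is finite and nonnegative, it is symmetric, it vanishes on the diagonal, and it satisfies the triangle inequality. The triangle inequality can equally be read off the dual definition \eqref{eq:D-def}, which is a supremum of seminorms $|\sigma(\phi)|$. The density of centered smooth tests, which makes the dual and spectral formulas agree, is part of \Cref{prop:compact-regularity}(i) via the spectral theorem.

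The remaining axiom is definiteness, and this is where the argument preceding the statement comes in. Suppose $\D_{r,\pi}(\mu,\nu)=0$. Since every weight $q_r(\lambda_n)$ in \eqref{eq:D-spectral} is strictly positive, this forces \eqref{eq:eigenmoments-equal}, namely $(\mu-\nu)(\varphi_n)=0$ for all $n\ge1$. The constant mode also matches, because $(\mu-\nu)(1)=0$ for probability measures. Now fix $\phi\in C(\X)$ and $s>0$, and write
\[
P_s\phi=\sum_{n\ge0}e^{-s\lambda_n}\ip{\phi}{\varphi_n}\varphi_n.
\]
This series converges uniformly on $\X$. To see why, note that $\abs{\ip{\phi}{\varphi_n}}\le\norm{\phi}_\infty$ and $\sup_x\abs{\varphi_n(x)}^2$ is bounded by the local Weyl estimate \eqref{eq:local-weyl-derivative} with $k=0$, so it grows polynomially in $\lambda_n$. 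Weyl counting makes the number of eigenvalues below $L$ polynomial in $L$. Together with the factor $e^{-s\lambda_n}$, a dyadic summation then gives uniform convergence. Because the convergence is uniform, integrating termwise against the finite signed measure $\mu-\nu$ is legitimate, and every term vanishes, so $(\mu-\nu)(P_s\phi)=0$.

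The main obstacle is the passage $s\downarrow0$. I would establish that $P_s\phi\to\phi$ uniformly for $\phi\in C(\X)$, i.e.\ that the Langevin semigroup is Feller on a compact manifold. I would first prove it for $\phi\in C^\infty(\X)$, using
\[
\norm{P_s\phi-\phi}_\infty\le\int_0^s\norm{P_u\A_\pi\phi}_\infty\dd u\le s\norm{\A_\pi\phi}_\infty,
\]
where the $L^\infty$ contractivity comes from the Markov property: by the maximum principle $P_u$ is positivity preserving with $P_u1=1$. For general continuous $\phi$ I would then approximate uniformly by smooth functions and use $L^\infty$ contractivity again. Finally, dominated convergence gives $(\mu-\nu)(\phi)=0$ for all $\phi\in C(\X)$, and the Riesz representation theorem yields $\mu=\nu$.
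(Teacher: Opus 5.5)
Your proposal is correct and follows the paper's argument: zero discrepancy forces every eigenmoment $(\mu-\nu)(\varphi_n)$ to vanish, the uniformly convergent heat-semigroup expansion then gives $(\mu-\nu)(P_s\phi)=0$, and the Feller limit $s\downarrow0$ yields $\mu=\nu$. Beyond the paper, you check the pseudometric axioms through the feature map and prove the uniform convergence of the series and the Feller property, both of which the paper only asserts.
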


\subsection{Heat-semigroup interpolation to \texorpdfstring{$W_1$}{W1}}

Heat smoothing makes a Lipschitz test function regular enough to pair with the discrepancy.  Its smoothing error is $O(\sqrt s)$, while its inverse-form norm grows as $s\downarrow0$.  Balancing these two costs gives the exponent $1/(r+2)$ in the main theorem.

\begin{proposition}[Compact interpolation]
\label{prop:compact-interpolation}
Under the assumptions of \Cref{prop:compact-regularity}, there is $C<\infty$ such that for all probability measures $\mu,\nu$,
\begin{equation}
\label{eq:compact-interpolation}
\Wone(\mu,\nu)
\le C\min\left\{1,
\D_{r,\pi}(\mu,\nu)^{1/(r+2)}
\right\}.
\end{equation}
\end{proposition}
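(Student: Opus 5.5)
By Kantorovich--Rubinstein duality, it suffices to bound $(\mu-\nu)(\phi)$ uniformly over $1$-Lipschitz functions $\phi$ on $\X$. Since $\mu-\nu$ annihilates constants, we may normalize $\phi(x_\ast)=0$, so that $\norm{\phi}_\infty\le\operatorname{diam}\X$. The bound $\Wone\le\operatorname{diam}\X$ already gives the branch $C\cdot1$ of the minimum. Hence we only need $\Wone(\mu,\nu)\le C\D^{1/(r+2)}$ when $\D=\D_{r,\pi}(\mu,\nu)\le1$.

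Fix $s\in(0,1]$ and split
\[
(\mu-\nu)(\phi)=(\mu-\nu)(\phi-P_s\phi)+(\mu-\nu)(P_s\phi-\pi(\phi)),
\]
where $P_s=e^{-s\A_\pi}$ is the Langevin heat semigroup used in \Cref{prop:compact-separation}.

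\textbf{Smoothing error.} We aim for $\norm{P_s\phi-\phi}_\infty\le C\sqrt s\,\Lip(\phi)$. Because $\X$ is compact with smooth positive $\pi$, the diffusion $dY=\nabla\log\pi(Y)\,dt+\sqrt2\,dB$ satisfies $\E_x d(Y_s,x)\le C\sqrt s$ for $s\le1$. One can see this from Gaussian-type heat kernel upper bounds for the smooth elliptic operator, or by applying Itô's formula to a smoothed $d(\cdot,x)^2$ near the diagonal. Therefore $|P_s\phi(x)-\phi(x)|\le\E_x|\phi(Y_s)-\phi(x)|\le C\sqrt s$, and the first term is at most $2C\sqrt s$.

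\textbf{Energy term.} Set $h=P_s\phi-\pi(\phi)$. This function is smooth and centered (since $\pi$ is invariant), so it is an admissible test function in \eqref{eq:D-def}. Hence
\[
|(\mu-\nu)(h)|\le\D\,\norm{h}_{\mathcal H_{r,\pi}}.
\]
By \eqref{eq:inverse-form},
\[
\norm{h}_{\mathcal H_{r,\pi}}^2=\sum_{n\ge1}\lambda_n(1+\lambda_n)^re^{-2s\lambda_n}|\phi_n|^2,
\]
where $\phi_n=\ip{\phi}{\varphi_n}$. We use $\sum_n\lambda_n|\phi_n|^2=\int|\nabla\phi|^2\dd\pi\le1$ and
\[
(1+\lambda)^re^{-2s\lambda}\le\sup_\lambda(1+\lambda)^re^{-2s\lambda}\le C_r s^{-r}
\]
for $s\le1$. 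These give $\norm{h}_{\mathcal H_{r,\pi}}\le C s^{-r/2}$. Absorbing one extra factor of $\lambda$ through the Dirichlet energy is the step that makes the Lipschitz hypothesis enter without losing a further power of $s$.

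\textbf{Balancing.} Combining the two estimates,
\[
\Wone(\mu,\nu)\le C(\sqrt s+s^{-r/2}\D).
\]
Choosing $s=\D^{2/(r+1)}\le1$ gives $\Wone\le C\D^{1/(r+1)}$. This is stronger than the claimed $\D^{1/(r+2)}$ for $\D\le1$, so the stated bound follows; a cruder estimate keeping $\lambda_n^{1/2}$ inside the supremum would instead produce exactly the exponent $1/(r+2)$. Alternatively, one can choose $s=\D^{2/(r+2)}$ directly.

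\textbf{Main obstacle.} The only delicate point is the uniform short-time estimate $\sup_x\E_x d(Y_s,x)\le C\sqrt s$ on the manifold with drift. I would prove it by covering $\X$ with finitely many charts, applying Itô's formula to $\psi(Y_t)=\min(d(Y_t,x)^2,\delta^2)$ smoothed, whose generator is bounded uniformly in $x$, and finishing with Jensen.
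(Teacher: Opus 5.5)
Your proof is correct. It follows the paper's skeleton: the heat split, an $O(\sqrt s)$ smoothing error, a spectral bound on the smoothed part, and balancing in $s$. It departs from the paper at the key spectral step.

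\textbf{The spectral step.} The paper bounds $\sum_{n}q_r(\lambda_n)^{-1}e^{-2s\lambda_n}|\phi_n|^2\le Cs^{-(r+1)}\norm{\phi}_{L^2(\pi)}^2$. This uses only the $L^2$ size of the test function. Keeping the whole factor $\lambda_n$ inside the supremum is exactly what produces the exponent $1/(r+2)$.

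You instead move one $\lambda_n$ into the Dirichlet energy, $\sum_n\lambda_n|\phi_n|^2=\int|\nabla\phi|^2\dd\pi\le1$. This is legitimate: a $1$-Lipschitz function lies in the form domain $H^1(\X)$, with $|\nabla\phi|\le1$ almost everywhere. The step gains a factor $s^{1/2}$ and yields $\Wone\le C\D^{1/(r+1)}$, which implies the stated bound when $\D\le1$.

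\textbf{What each route buys.} Your route uses the Lipschitz hypothesis in both terms and gives a strictly better exponent. The same device would sharpen the $W_1$ exponents in \eqref{eq:compact-W-rate}--\eqref{eq:compact-last-rate}. It would also replace $\Lambda^{(r+1)/2}$ by $\Lambda^{r/2}$ in \Cref{prop:finite-resolution}. The paper's version is marginally simpler, since it needs only a uniform $L^2(\pi)$ bound on centered test functions.

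\textbf{Smoothing error.} The paper applies It\^o's formula to the coordinates of a smooth embedding $J:\X\to\R^m$. This avoids cut-locus and uniformity-in-$x$ issues in one stroke. Your truncated squared-distance argument also works, but it needs the injectivity-radius cutoff plus a tail bound for the event $\{d_\X(Y_s,x)>\delta\}$.

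\textbf{Two small corrections.} First, your parenthetical claim is off: keeping $\lambda_n^{1/2}$ inside the supremum gives the exponent $2/(2r+3)$, not $1/(r+2)$. The paper's $1/(r+2)$ comes from keeping all of $\lambda_n$ there. Second, handle the case $\D=0$ by letting $s\downarrow0$, rather than setting $s=\D^{2/(r+1)}$.
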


\begin{proof}
Let $\sigma=\mu-\nu$ and let $\phi$ be $1$-Lipschitz, normalized by $\pi(\phi)=0$.  Split the test function into its heat-smoothed part and the approximation error: for $0<s\le1$,
\begin{equation}
\label{eq:heat-split}
\sigma(\phi)=\sigma(P_s\phi)+\sigma(\phi-P_s\phi).
\end{equation}
The diffusion moves an expected distance of order $\sqrt s$, which gives
\begin{equation}
\label{eq:heat-lip-approx}
\norm{\phi-P_s\phi}_{L^\infty}
\le C\sqrt s,
\end{equation}
as proved in \Cref{app:compact-analysis}.  For the smoothed term, spectral Cauchy--Schwarz gives
\begin{align}
\abs{\sigma(P_s\phi)}
&\le
\D_{r,\pi}(\sigma)
\left[
\sum_{n\ge1}q_r(\lambda_n)^{-1}
e^{-2s\lambda_n}
\abs{\ip{\phi}{\varphi_n}_{L^2(\pi)}}^2
\right]^{1/2}\\
&\le
C s^{-(r+1)/2}
\D_{r,\pi}(\sigma)\norm{\phi}_{L^2(\pi)}.
\end{align}
The normalization and compactness bound $\norm{\phi}_{L^2(\pi)}$ uniformly over $1$-Lipschitz $\phi$.  Hence
\begin{equation}
\label{eq:heat-intermediate}
\Wone(\mu,\nu)
\le C\left[s^{-(r+1)/2}\D_{r,\pi}(\mu,\nu)+\sqrt s\right].
\end{equation}
For small discrepancy, choose $s=\D_{r,\pi}(\mu,\nu)^{2/(r+2)}$.  For large discrepancy, use the diameter bound for $W_1$.
\end{proof}

\subsection{Proof of the compact theorem}

\begin{proof}[Proof of \Cref{thm:compact-main}]
\Cref{prop:compact-regularity,prop:compact-pop-wp,prop:compact-separation} give the kernel, well-posedness, gap, and separation properties.  Continuity of the kernel makes its discrepancy continuous under weak convergence of probability measures, and $\Pcal(\X)$ is compact in $W_1$; moment control is automatic.  The energy and entropy identities are justified in \Cref{sec:energy-population,app:atomic-regularization}.  Together with \Cref{prop:compact-transport}, these verify \Cref{ass:GB}.  Thus \eqref{eq:quantitative-D-log} gives \eqref{eq:compact-D-rate}.  Applying \Cref{prop:compact-interpolation} and Jensen's inequality gives \eqref{eq:compact-W-rate}.

For the population, combine \eqref{eq:population-D-rate} with \eqref{eq:compact-interpolation} to obtain \eqref{eq:compact-pop-W-rate}.  Finally, use
\[
\Wone(\mu_{t_N}^N,\pi)
\le\Wone(\mu_{t_N}^N,\rho_{t_N})
+\Wone(\rho_{t_N},\pi)
\]
and the two preceding bounds.  Since $a\mapsto a^{1/(r+2)}$ is concave, the sum can be written in the form \eqref{eq:compact-last-rate}.  The labelled-marginal statement follows from \Cref{app:labelled-marginals}.
\end{proof}

\section{Finite spectral approximation}
\label{sec:finite-truncation}

An implementation uses finitely many target features.  Exact spectral truncation preserves the Stein identity, both dissipation laws, and the finite-time coupling estimate with constants independent of the cutoff.  Thus the all-time argument still controls the retained features.  The new issue is spatial resolution: measures with identical retained moments can differ at smaller scales.  A final interpolation estimate quantifies that error and proves \Cref{thm:finite-mode-main}.

\subsection{Exact truncated kernels and dynamics}

Fix the cutoff $\chi$ from \eqref{eq:qLambda}.  Define
\begin{align}
\label{eq:G-Lambda-def}
\G_{r,\pi}^{\Lambda}(x,y)
&=\sum_{n\ge1}q_r(\lambda_n)\chi(\lambda_n/\Lambda)
\varphi_n(x)\varphi_n(y),\\
\label{eq:K-Lambda-def}
\K_{r,\pi}^{\Lambda}(x,y)
&=\sum_{n\ge1}
\frac{q_r(\lambda_n)\chi(\lambda_n/\Lambda)}{\lambda_n}
\nabla\varphi_n(x)\otimes\nabla\varphi_n(y).
\end{align}
Only eigenvalues below $2\Lambda$ occur.  For a centered signed measure $\sigma$, write
\begin{equation}
\label{eq:Phi-Lambda-def}
\PhiPot^{\Lambda}_{\sigma}(x)
=\int_{\X}\G_{r,\pi}^{\Lambda}(x,y)\,\sigma(\dd y)
\end{equation}
and
\begin{equation}
\label{eq:D-Lambda-def}
\D_{r,\pi}^{\Lambda}(\sigma)^2
=\sum_{n\ge1}q_r(\lambda_n)\chi(\lambda_n/\Lambda)
\abs{\sigma(\varphi_n)}^2.
\end{equation}

Because the sums are finite, the weighted-divergence calculation is exact term by term:
\begin{equation}
\label{eq:truncated-intertwining}
\operatorname{div}_{\pi,y}\K_{r,\pi}^{\Lambda}(x,y)
=-\nabla_x\G_{r,\pi}^{\Lambda}(x,y).
\end{equation}
Thus the truncated SVGD velocity is
\begin{equation}
\label{eq:truncated-velocity}
v_{\mu}^{\Lambda}(x)
=-\nabla\PhiPot^{\Lambda}_{\mu-\pi}(x)
=-\int_{\X}\nabla_x\G_{r,\pi}^{\Lambda}(x,y)\,\mu(\dd y).
\end{equation}
The particle and population equations are
\begin{align}
\label{eq:truncated-particles}
\dot X_i^{N,\Lambda}(t)
&=v_{\mu_t^{N,\Lambda}}^{\Lambda}
  (X_i^{N,\Lambda}(t)),
&
\mu_t^{N,\Lambda}
&=\frac1N\sum_{i=1}^N\delta_{X_i^{N,\Lambda}(t)},\\
\label{eq:truncated-population}
\partial_t\rho_t^{\Lambda}
+\nabla\cdot(\rho_t^{\Lambda}v_{\rho_t^{\Lambda}}^{\Lambda})
&=0.
\end{align}
The target is stationary because every retained eigenfunction is centered against $\pi$.

\begin{proposition}[Structure preserved by exact truncation]
\label{prop:truncated-structure}
Under the assumptions of \Cref{thm:compact-main}, both solutions of
\eqref{eq:truncated-particles}--\eqref{eq:truncated-population} are global and satisfy
\begin{equation}
\label{eq:truncated-target-energy}
\frac12\frac{\dd}{\dd t}
\D_{r,\pi}^{\Lambda}(\mu_t,\pi)^2
=-\int_{\X}
\abs{\nabla\PhiPot^{\Lambda}_{\mu_t-\pi}}^2\,\dd\mu_t
\le0.
\end{equation}
If $\rho_t^{\Lambda}=f_t^{\Lambda}\pi$ and
$\nu_t^{\Lambda}=f_t^{\Lambda}-1$, then
\begin{equation}
\label{eq:truncated-entropy}
\frac{\dd}{\dd t}\KL(\rho_t^{\Lambda}\mid\pi)
=-\sum_{n\ge1}
\lambda_n q_r(\lambda_n)\chi(\lambda_n/\Lambda)
\abs{\ip{\nu_t^{\Lambda}}{\varphi_n}_{L^2(\pi)}}^2,
\end{equation}
and consequently
\begin{equation}
\label{eq:truncated-population-rate}
\D_{r,\pi}^{\Lambda}(\rho_t^{\Lambda},\pi)^2
\le \frac{\KL(\rho_0\mid\pi)}{\lambda_1t},
\qquad t>0.
\end{equation}
All constants are independent of $\Lambda$.
\end{proposition}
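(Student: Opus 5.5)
The plan is to repeat the arguments of \Cref{sec:energy-population} with the weights $q_r(\lambda_n)$ replaced by $q_r(\lambda_n)\chi(\lambda_n/\Lambda)$. Since only eigenvalues below $2\Lambda$ contribute, every series is a finite sum, so the formal computations become exact. Constants that are uniform in $\Lambda$ should then follow from $0\le\chi\le1$ together with the bounds of \Cref{prop:compact-regularity}.

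\emph{Global existence.} The truncated interaction $-\nabla_x\G^{\Lambda}_{r,\pi}$ is a finite sum of smooth functions on the compact manifold $\X\times\X$. It is therefore bounded and globally Lipschitz, so the particle ODE \eqref{eq:truncated-particles} has a unique global solution by Picard--Lindel\"of. Each derivative bound in \eqref{eq:compact-feature-sum-v}--\eqref{eq:compact-feature-sum-div} survives when the weights are multiplied by $\chi\le1$, so the bounds hold uniformly in $\Lambda$. For the population, I will invoke the last sentence of \Cref{prop:compact-pop-wp}, which gives global classical solutions for every exact truncation. That proposition also yields the ratio equation with $\divpi v^{\Lambda}=\A_\pi\Q^{\Lambda}\nu$.

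\emph{Energy identity.} For any solution $\mu_t$, empirical or population, each moment $m_n(t)=\mu_t(\varphi_n)$ with $\lambda_n<2\Lambda$ satisfies
\[
\frac{\dd}{\dd t}m_n=\int\nabla\varphi_n\cdot v^{\Lambda}_{\mu_t}\,\dd\mu_t .
\]
This holds for particles by the chain rule and for the population by testing the continuity equation against the smooth function $\varphi_n$. Then
\[
\frac12\frac{\dd}{\dd t}\sum_n q_r(\lambda_n)\chi(\lambda_n/\Lambda)\,m_n^2=\int\nabla\PhiPot^{\Lambda}_{\mu_t-\pi}\cdot v^{\Lambda}_{\mu_t}\,\dd\mu_t,
\]
where $\PhiPot^{\Lambda}_{\mu_t-\pi}=\sum_n q_r\chi\, m_n\varphi_n$ because $\pi(\varphi_n)=0$. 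Substituting $v^{\Lambda}=-\nabla\PhiPot^{\Lambda}$ gives \eqref{eq:truncated-target-energy}. No regularization of diagonal terms is needed, because the sum is finite.

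\emph{Entropy and rate.} I will follow the computation of \Cref{prop:entropy-dissipation} with $\Q$ replaced by $\Q^{\Lambda}=\sum_n q_r(\lambda_n)\chi(\lambda_n/\Lambda)\,\varphi_n\otimes\varphi_n$. Weighted integration by parts gives
\[
\frac{\dd}{\dd t}\KL=-\ip{\nu}{\A_\pi\Q^{\Lambda}\nu}_{L^2(\pi)},
\]
which is \eqref{eq:truncated-entropy}. Where $f^{\Lambda}_t$ may vanish, I will use the regularized entropy of \Cref{app:entropy-chain-rule} together with dominated convergence. This relies on the bound $f_t\le e^{Ct}\norm{f_0}_\infty$, whose constant is uniform in $\Lambda$ because $\norm{\divpi v^{\Lambda}}_\infty$ is uniformly bounded by \eqref{eq:compact-velocity-uniform}. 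This justification step is the one I expect to be the main obstacle.

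Comparing the sums term by term, $\lambda_n\ge\lambda_1$ gives
\[
\frac{\dd}{\dd t}\KL\le-\lambda_1\,\D^{\Lambda}_{r,\pi}(\rho_t^{\Lambda},\pi)^2 .
\]
Integrating in time and then applying the monotonicity from \eqref{eq:truncated-target-energy}, exactly as in the proof of \eqref{eq:population-D-rate}, yields \eqref{eq:truncated-population-rate}. The only constants appearing there are $\KL(\rho_0\mid\pi)$ and $\lambda_1$, and neither depends on $\Lambda$.
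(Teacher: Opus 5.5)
Your proposal is correct and follows the paper's proof essentially step by step. The steps are: global existence from the bounded Lipschitz truncated interaction and \Cref{prop:compact-pop-wp}; the energy identity by finite-sum differentiation of the retained moments; the entropy identity via the regularized entropy of \Cref{app:entropy-chain-rule} and weighted integration by parts; and the rate from $\lambda_n\ge\lambda_1$, time integration and monotonicity. One refinement: the dominated-convergence step you flag as the obstacle needs the finite-interval classical $C^1$ bounds on $f_t^{\Lambda}$ from \Cref{prop:compact-pop-wp}, not only the $L^\infty$ bound. It is also carried out at fixed $\Lambda$, so no uniformity in $\Lambda$ is needed there.
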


\begin{proof}
Global particle existence follows from the uniform bounded Lipschitz interaction, and global smooth population existence from \Cref{prop:compact-pop-wp}.  Finite-sum differentiation gives \eqref{eq:truncated-target-energy} as in \Cref{prop:common-energy}.  For entropy, the regularization argument in \Cref{app:entropy-chain-rule} applies because the characteristic formula preserves nonnegativity and the solution is smooth and bounded on every finite interval, including when $f_0$ vanishes.  Weighted integration by parts gives \eqref{eq:truncated-entropy}.  Since
$\lambda_nq_r(\lambda_n)\chi(\lambda_n/\Lambda)
\ge\lambda_1q_r(\lambda_n)\chi(\lambda_n/\Lambda)$,
integration and monotonicity give \eqref{eq:truncated-population-rate}.
The constants in these dissipation and relaxation estimates are independent of $\Lambda$.  Nonnegativity comes from the characteristic formula, so it does not require the cutoff multiplier to preserve positivity.
\end{proof}

\subsection{Uniform finite-time mean-field approximation}

Introduce the interaction field
\begin{equation}
\label{eq:b-Lambda}
b_{\Lambda}(x,y)=-\nabla_x\G_{r,\pi}^{\Lambda}(x,y).
\end{equation}
The large smoothing threshold in \Cref{prop:compact-regularity} implies
\begin{equation}
\label{eq:b-uniform-C1}
\sup_{\Lambda\ge\lambda_1}
\norm{b_{\Lambda}}_{C^1(\X\times\X)}<\infty.
\end{equation}
The same spectral estimates show that the feature map
\begin{equation}
\label{eq:feature-map-Lambda}
\Psi_{\Lambda}(x)
=\bigl(
q_r(\lambda_n)^{1/2}
\chi(\lambda_n/\Lambda)^{1/2}
\varphi_n(x)
\bigr)_{n\ge1}
\in\ell^2
\end{equation}
is uniformly bounded and Lipschitz.  Moreover,
\begin{equation}
\label{eq:D-feature-Lambda}
\D_{r,\pi}^{\Lambda}(\mu,\nu)
=\left\|
\int_{\X}\Psi_{\Lambda}\,\dd(\mu-\nu)
\right\|_{\ell^2}.
\end{equation}

These are precisely the interaction and feature bounds used in the full-kernel coupling proof.  Couple the particles to independent population characteristics $\bar X_i^{\Lambda}$ with common law $\rho_t^{\Lambda}$ and the same initial positions.  The proof in \Cref{app:finite-mode-mean-field} then gives the following estimate with constants independent of the cutoff.

\begin{proposition}[Finite-time truncated propagation of chaos]
\label{prop:finite-mode-finite-time}
There are constants $C_0,C_1<\infty$, depending on
$(\X,\pi,r,f_0)$ but independent of $N$, $T$, and
$\Lambda\ge\lambda_1$, such that
\begin{equation}
\label{eq:finite-mode-finite-time}
\E\sup_{0\le t\le T}
\D_{r,\pi}^{\Lambda}
(\mu_t^{N,\Lambda},\rho_t^{\Lambda})^2
\le \frac{C_0}{N}e^{C_1T}.
\end{equation}
\end{proposition}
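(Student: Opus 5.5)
We follow the synchronous coupling strategy sketched for \Cref{prop:compact-transport}. The only inputs are the uniform bound \eqref{eq:b-uniform-C1} on $b_\Lambda$ and the uniform boundedness and Lipschitz continuity of $\Psi_\Lambda$, both independent of $\Lambda$. Let $\bar X_i^\Lambda(t)$ solve $\dot{\bar X}_i^\Lambda=v^\Lambda_{\rho_t^\Lambda}(\bar X_i^\Lambda)$ with $\bar X_i^\Lambda(0)=X_i^{N,\Lambda}(0)$. These are i.i.d. with law $\rho_t^\Lambda$ at each time, because the population flow of \Cref{prop:compact-pop-wp} is transported by its own characteristics. Write $\bar\mu_t^{N}=N^{-1}\sum_i\delta_{\bar X_i^\Lambda(t)}$ and use \eqref{eq:D-feature-Lambda} to split
\[
\D^\Lambda_{r,\pi}(\mu_t^{N,\Lambda},\rho_t^\Lambda)\le \|\Psi_\Lambda\|_{\Lip}\,\frac1N\sum_i d\bigl(X_i^{N,\Lambda}(t),\bar X_i^\Lambda(t)\bigr)+\Bigl\|\frac1N\sum_i\bigl(\Psi_\Lambda(\bar X_i^\Lambda(t))-\rho_t^\Lambda(\Psi_\Lambda)\bigr)\Bigr\|_{\ell^2}.
\]
Call the first term the coupling error and the second the sampling error.

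\emph{Coupling error.} Let $e_t=N^{-1}\sum_i d(X_i,\bar X_i)$. Differentiating along the flows, which on a manifold is done via a uniform comparison between Riemannian distance and parallel-transported differences for nearby points, gives
\[
\dot e_t\le \|b_\Lambda\|_{C^1}\bigl(2e_t\bigr)+\frac1N\sum_i\Bigl|\frac1N\sum_j b_\Lambda(\bar X_i,\bar X_j)-\int b_\Lambda(\bar X_i,y)\rho_t^\Lambda(\dd y)\Bigr|.
\]
The last term is $\eta_t$, an i.i.d. fluctuation of a bounded kernel. Separating the diagonal $j=i$, which contributes $O(1/N)$, and conditioning on $\bar X_i$ gives $\E\eta_t^2\le C/N$ uniformly in $t$ and $\Lambda$. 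Gronwall then yields
\[
\sup_{t\le T}e_t\le e^{CT}\int_0^T\eta_s\dd s,
\]
so by Cauchy--Schwarz
\[
\E\sup_{t\le T}e_t^2\le e^{2CT}\,T\int_0^T\E\eta_s^2\dd s\le C_0e^{C_1T}/N.
\]

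\emph{Sampling error.} The process $Y_t=N^{-1}\sum_i\xi_i(t)$, with $\xi_i(t)=\Psi_\Lambda(\bar X_i(t))-\rho_t^\Lambda(\Psi_\Lambda)$, is a sum of i.i.d. centered $\ell^2$-valued paths. For the supremum over $t\le T$ we use
\[
\sup_t\|Y_t\|\le\|Y_0\|+\int_0^T\|\dot Y_s\|\dd s.
\]
The derivative $\dot\xi_i$ equals $D\Psi_\Lambda(\bar X_i)v^\Lambda_{\rho_s}(\bar X_i)$ minus its mean, which is again an i.i.d. centered bounded family, so $\E\|\dot Y_s\|^2\le C/N$. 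This yields $C(1+T^2)/N$, which is absorbed into $C_0e^{C_1T}/N$. Summing the two squared contributions gives \eqref{eq:finite-mode-finite-time}.

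\emph{Main obstacle: uniformity in $\Lambda$.} For $\|b_\Lambda\|_{C^1}$ this is \eqref{eq:b-uniform-C1}. For $D\Psi_\Lambda$, with the needed $\ell^2$ sums $\sum_n q_r\chi|\nabla\varphi_n|^2$, it follows from \eqref{eq:compact-feature-sum-v} because $0\le\chi\le1$ only decreases each term. The bound on $\dot\xi_i$ needs $\|D\Psi_\Lambda\|$ only, since $v^\Lambda$ is uniformly bounded. We also check that $\rho_t^\Lambda$ does not enter any constant except through probability normalization, so the constants depend on $f_0$ only through the initial condition.
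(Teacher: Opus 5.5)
Your proposal is correct and follows the paper's proof in the appendix essentially step for step. The shared ingredients are synchronous coupling to independent population characteristics, a conditional-variance bound for the force fluctuation that includes the diagonal term, and the fundamental-theorem-of-calculus plus Cauchy--Schwarz bound for the time supremum of the empirical feature process, with uniformity in $\Lambda$ coming from $0\le\chi\le1$. The only difference is in how distances are differentiated. The paper differentiates squared chord distances after a smooth embedding $J:\X\to\mathbb R^m$, so it never meets the cut locus. Your Riemannian-distance version needs one extra remark, because coupled pairs need not stay nearby: when $d(X_i,\bar X_i)\ge\delta$, the upper Dini derivative is at most $2\sup\abs{b_\Lambda}\le(2\sup\abs{b_\Lambda}/\delta)\,d(X_i,\bar X_i)$, which is still linear in the distance.
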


As for the full kernel, this estimate will be used only up to a time of order $\log N$; target-energy monotonicity controls the later times.

\subsection{Stitching in the retained feature geometry}

Although the truncated discrepancy is only a seminorm on signed measures, it retains the triangle inequality.  Together with target-energy monotonicity, this is all that \Cref{lem:stitching} uses.  Therefore, for every $t\ge T$,
\begin{equation}
\label{eq:finite-mode-stitching}
\D_{r,\pi}^{\Lambda}
(\mu_t^{N,\Lambda},\rho_t^{\Lambda})
\le
\D_{r,\pi}^{\Lambda}
(\mu_T^{N,\Lambda},\rho_T^{\Lambda})
+2\D_{r,\pi}^{\Lambda}(\rho_T^{\Lambda},\pi).
\end{equation}
Combining \eqref{eq:finite-mode-finite-time},
\eqref{eq:truncated-population-rate}, and
\eqref{eq:finite-mode-stitching} gives
\begin{equation}
\label{eq:finite-mode-preopt}
\E\sup_{t\ge0}
\D_{r,\pi}^{\Lambda}
(\mu_t^{N,\Lambda},\rho_t^{\Lambda})
\le
\frac{C}{\sqrt N}e^{CT}+CT^{-1/2}.
\end{equation}
The same choice $T=c\log(2+N)$ as in \Cref{sec:relative-stability}, with $c>0$ small and independent of $\Lambda$, proves \eqref{eq:finite-mode-alltime-D}.

\subsection{Recovering unresolved spatial scales}

It remains to recover Lipschitz observables from the retained features.  A spectral cutoff at eigenvalue $\Lambda$ resolves spatial scales of order $\Lambda^{-1/2}$.  The next estimate separates the discrepancy in the retained modes from the error below that scale.

\begin{proposition}[Finite-resolution interpolation]
\label{prop:finite-resolution}
There is $C<\infty$, independent of $\Lambda\ge\lambda_1$, such that for all
$\mu,\nu\in\Pcal(\X)$,
\begin{equation}
\label{eq:finite-resolution}
\Wone(\mu,\nu)
\le C\left[
\Lambda^{(r+1)/2}
\D_{r,\pi}^{\Lambda}(\mu,\nu)
+\Lambda^{-1/2}
\right].
\end{equation}
\end{proposition}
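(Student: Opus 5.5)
We follow the proof of \Cref{prop:compact-interpolation} but smooth the Lipschitz test function with a spectral projector at scale $\Lambda$ rather than the heat semigroup. The two terms in \eqref{eq:finite-resolution} come from two sources. The retained frequencies are paired against $\D_{r,\pi}^{\Lambda}$ by spectral Cauchy--Schwarz. The unresolved frequencies are handled by a uniform approximation bound.

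Fix a $1$-Lipschitz $\phi$ with $\pi(\phi)=0$ and set $\sigma=\mu-\nu$. Let $\eta=\chi(\cdot/ \Lambda)$, or better a smooth cutoff $\tilde\chi$ with $\tilde\chi\le\chi$, $\tilde\chi\equiv1$ on $[0,1/2]$ and support in $[0,1]$. This makes $\chi(\lambda_n/\Lambda)=1$ wherever $\tilde\chi(\lambda_n/\Lambda)\ne0$. Define $S_\Lambda\phi=\tilde\chi(\A_\pi/\Lambda)\phi$ and split
\[
\sigma(\phi)=\sigma(S_\Lambda\phi)+\sigma(\phi-S_\Lambda\phi).
\]

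For the first term, only modes with $\chi(\lambda_n/\Lambda)=1$ and $\lambda_n\le\Lambda$ appear. Cauchy--Schwarz gives
\[
\abs{\sigma(S_\Lambda\phi)}
\le\D_{r,\pi}^{\Lambda}(\sigma)\Bigl[\sum_{\lambda_n\le\Lambda}q_r(\lambda_n)^{-1}\abs{\phi_n}^2\Bigr]^{1/2}.
\]
We use $q_r(\lambda_n)^{-1}=\lambda_n(1+\lambda_n)^r\le C\Lambda^{r}\lambda_n$ and $\sum\lambda_n\abs{\phi_n}^2=\norm{\nabla\phi}_{L^2(\pi)}^2\le1$. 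This bounds the first term by $C\Lambda^{r/2}\D^{\Lambda}_{r,\pi}(\sigma)$, which is at most the stated factor $\Lambda^{(r+1)/2}$ since $\Lambda\ge\lambda_1$ (adjusting $C$). The cruder route through $\norm{\phi}_{L^2}$ gives exactly $\Lambda^{(r+1)/2}$.

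The main obstacle is the second term, a uniform estimate
\[
\norm{\phi-\tilde\chi(\A_\pi/\Lambda)\phi}_{L^\infty}\le C\Lambda^{-1/2}
\]
for $1$-Lipschitz $\phi$, uniformly in $\Lambda$. Sharp spectral projectors are not uniformly bounded on $L^\infty$, so the smooth cutoff is essential. We first try to write $\tilde\chi(\A_\pi/\Lambda)$ as an integral operator with kernel $k_\Lambda(x,y)$ satisfying
\[
\int k_\Lambda(x,y)\,\pi(\dd y)=1
\quad\text{and}\quad
\int\abs{k_\Lambda(x,y)}\,(1+\Lambda^{1/2}d(x,y))\,\pi(\dd y)\le C.
\]
Then
\[
\phi(x)-S_\Lambda\phi(x)=\int k_\Lambda(x,y)\bigl(\phi(x)-\phi(y)\bigr)\pi(\dd y),
\]
which is bounded by $C\Lambda^{-1/2}$. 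These kernel bounds are standard for smooth spectral multipliers of elliptic operators on compact manifolds. They follow from semiclassical functional calculus with $h=\Lambda^{-1/2}$, or from finite propagation speed of the wave equation together with Gaussian heat kernel bounds (Helffer--Sjöstrand, Seeley). We would record this in \Cref{app:compact-analysis}.

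If a self-contained route is preferred, we would fall back on the heat semigroup. Choose $s=\Lambda^{-1}$ and write
\[
\phi-S_\Lambda\phi=(\phi-P_s\phi)+(\Id-S_\Lambda)P_s\phi.
\]
The first piece is $O(\Lambda^{-1/2})$ by \eqref{eq:heat-lip-approx}. The second is handled by a Sobolev embedding applied to the tail $\sum_{\lambda_n>\Lambda/2}e^{-s\lambda_n}\phi_n\varphi_n$, using the local Weyl bound \eqref{eq:local-weyl-derivative}. Finally, take the supremum over $\phi$ to get $W_1$ by Kantorovich duality, and use the diameter bound when the right side is large.
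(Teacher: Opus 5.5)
Your main argument is the paper's proof. It uses the same split of $\sigma(\phi)$ with a smooth cutoff supported where $\chi\equiv1$, and spectral Cauchy--Schwarz on the retained part; your Dirichlet-form bound $\sum_n\lambda_n\abs{\phi_n}^2\le1$ even sharpens $\Lambda^{(r+1)/2}$ to $\Lambda^{r/2}$. It then uses an $L^1$-localized multiplier kernel with $\int k_\Lambda\,\dd\pi=1$ and first moment $O(\Lambda^{-1/2})$, which the paper takes from Coulhon--Kerkyacharian--Petrushev (Theorem~3.4) through the finite-propagation-speed and Gaussian-bound mechanism you name.

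Do not rely on the heat-semigroup fallback, however. With $s=\Lambda^{-1}$ the modes $\Lambda/2<\lambda_n\le\Lambda$ are not damped. Bounding the tail pointwise by Cauchy--Schwarz, $\sum_n\lambda_n\abs{\phi_n}^2\le1$ and the local Weyl law then gives only $O(\Lambda^{d/4-1/2})$, and an $L^2$-Sobolev embedding does worse, so it misses the rate in every dimension. Taking $s\asymp\Lambda^{-1}\log(2+\Lambda)$ repairs it, but only with an extra $\sqrt{\log(2+\Lambda)}$ factor on the resolution term.
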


\begin{proof}
Choose an auxiliary smoother supported entirely among the fully retained modes: let $\theta\in C_c^\infty([0,1))$ equal one near zero, with support contained in the region on which $\chi\equiv1$, and set
$P_{\le\Lambda}=\theta(\A_\pi/\Lambda)$.  For a
$1$-Lipschitz function $\phi$ normalized by $\pi(\phi)=0$,
\begin{equation}
\label{eq:finite-resolution-split}
(\mu-\nu)(\phi)
=(\mu-\nu)(P_{\le\Lambda}\phi)
+(\mu-\nu)((\Id-P_{\le\Lambda})\phi).
\end{equation}
The localized multiplier kernel and preservation of constants in \citet[Theorem~3.4]{coulhon2012heat} give, by integration against $\phi(y)-\phi(x)$,
\begin{equation}
\label{eq:spectral-lip-approx}
\norm{(\Id-P_{\le\Lambda})\phi}_{L^\infty}
\le C\Lambda^{-1/2}\Lip(\phi).
\end{equation}
For the retained term, spectral Cauchy--Schwarz and
$q_r(\lambda)^{-1}\le C\Lambda^{r+1}$ on
$\supp\theta(\lambda/\Lambda)$ give
\begin{equation}
\label{eq:finite-resolution-low}
\abs{(\mu-\nu)(P_{\le\Lambda}\phi)}
\le C\Lambda^{(r+1)/2}
\D_{r,\pi}^{\Lambda}(\mu,\nu)
\norm{\phi}_{L^2(\pi)}.
\end{equation}
Compactness and the normalization of $\phi$ bound the final norm uniformly.  Taking the supremum over $1$-Lipschitz functions proves the claim.  The multiplier estimate is recalled in \Cref{app:finite-resolution-proof}.
\end{proof}

Apply \Cref{prop:finite-resolution} with
$\mu=\mu_{t_N}^{N,\Lambda_N}$ and $\nu=\pi$, then use
\begin{equation}
\label{eq:finite-mode-target-triangle}
\D_{r,\pi}^{\Lambda_N}
(\mu_{t_N}^{N,\Lambda_N},\pi)
\le
\D_{r,\pi}^{\Lambda_N}
(\mu_{t_N}^{N,\Lambda_N},\rho_{t_N}^{\Lambda_N})
+
\D_{r,\pi}^{\Lambda_N}
(\rho_{t_N}^{\Lambda_N},\pi).
\end{equation}
Equations \eqref{eq:finite-mode-alltime-D} and
\eqref{eq:truncated-population-rate} give
\eqref{eq:finite-mode-last-bound}.  Balancing its two terms gives
\eqref{eq:optimal-Lambda}--\eqref{eq:finite-mode-optimized}, and Weyl counting gives \eqref{eq:mode-count}.  This proves
\Cref{thm:finite-mode-main}.

\subsection{Feature-factorized implementation}

Let $I_\Lambda=\{n\ge1:\lambda_n\le2\Lambda\}$ and
$L_\Lambda=\abs{I_\Lambda}$.  Define the empirical feature errors
\begin{equation}
\label{eq:empirical-features-Lambda}
m_n^{N,\Lambda}(t)
=\frac1N\sum_{j=1}^N\varphi_n(X_j^{N,\Lambda}(t)).
\end{equation}
Then
\begin{equation}
\label{eq:factorized-truncated-update}
\dot X_i^{N,\Lambda}
=-\sum_{n\in I_\Lambda}
q_r(\lambda_n)\chi(\lambda_n/\Lambda)
 m_n^{N,\Lambda}
\nabla\varphi_n(X_i^{N,\Lambda}).
\end{equation}
Once the feature values and gradients have been evaluated, all coefficients
$m_n^{N,\Lambda}$ are formed in $O(NL_\Lambda)$ operations and the velocity sum costs $O(NL_\Lambda d)$.  No all-pairs $N^2$ interaction and no dense $d\times d$ matrix kernel need be formed.

\subsection{Approximate eigenpairs and structural residuals}
\label{sec:approx-eigenpairs}

The preceding implementation formula uses exact target eigenpairs.  If numerical eigenpairs satisfy
\begin{equation}
\label{eq:approx-eigenpair}
\A_\pi\widetilde\varphi_n
=\widetilde\lambda_n\widetilde\varphi_n+r_n,
\end{equation}
then the corresponding matrix lift satisfies
\begin{equation}
\label{eq:approx-intertwining-residual}
\operatorname{div}_{\pi,y}\widetilde\K(x,y)
=-\nabla_x\widetilde\G(x,y)
-\sum_n
\frac{q_r(\widetilde\lambda_n)}{\widetilde\lambda_n}
\nabla\widetilde\varphi_n(x)r_n(y).
\end{equation}
The residual perturbs the velocity and the exact Lyapunov identity at every time.  Extending the theorem to numerical eigenpairs therefore requires bounds on $r_n$ in a norm that controls evaluation against empirical measures, uniformly along the dynamics; see \Cref{sec:discussion}.

\section{Confining targets on \texorpdfstring{$\R^d$}{Euclidean space}}
\label{sec:euclidean-targets}

On an unbounded state space, the common energy must also prevent mass from escaping to infinity. Quadratic confinement makes this possible: the Green--Bessel discrepancy controls a second moment, which gives compactness in $\Wone$ and global continuation of every finite particle system. We first prove these static conclusions of \Cref{thm:euclidean-main}. We then use the population and transport hypotheses in \Cref{ass:euclidean-transport} to obtain finite-time comparison. Those hypotheses are the additional input for the conditional dynamical extension; confinement alone is not asserted to verify them.

\subsection{Target class and spectral gap}

\begin{assumption}[Smooth quadratic-type confinement]
\label{ass:euclidean-target}
Let $\X=\R^d$, fix $0<\kappa\le K<\infty$, and let $U,W\in C^\infty(\R^d)$ satisfy
\begin{equation}
\label{eq:euclidean-V}
V=U+W,\qquad \kappa\Id\le D^2U\le K\Id,
\end{equation}
where $W$ is bounded and
\begin{equation}
\label{eq:euclidean-finite-regularity}
\norm{D^jU}_\infty<\infty\ (j\ge3),\qquad
\norm{D^jW}_\infty<\infty\ (j\ge1).
\end{equation}
Fix an integer smoothing parameter
\begin{equation}
\label{eq:euclidean-r-threshold}
r>2d+10.
\end{equation}
\end{assumption}

The derivative bounds allow cutoff approximation in the operator domains used below. The smoothing threshold leaves enough Sobolev regularity for point and derivative evaluation, and also makes the kernel trace finite. This sufficient threshold is not asserted to be optimal.

Let $\nu(\dd x)=Z_U^{-1}e^{-U(x)}\dd x$. The curvature criterion gives a Poincar\'e gap at least $\kappa$ for $\nu$ \citep[Section~4.8]{bakry2014analysis}. The following is the bounded-perturbation comparison associated with the Holley--Stroock principle \citep{holley1987logarithmic}.

\begin{proposition}[Bounded-perturbation gap]
\label{prop:euclidean-gap}
Under \eqref{eq:euclidean-V},
\begin{equation}
\label{eq:HS-gap}
\lambda_1(\pi)\ge\kappa e^{-\osc W}.
\end{equation}
\end{proposition}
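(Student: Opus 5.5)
The plan is the standard Holley--Stroock comparison: the Bakry--\'Emery Poincar\'e inequality for $\nu$ is transferred to $\pi$ by comparing both sides of the inequality under the change of measure. Write
\[
\pi(\dd x)=\frac{Z_U}{Z}e^{-W(x)}\,\nu(\dd x),\qquad Z=\int e^{-U-W}\dd x .
\]
Since $\inf W\le W\le\sup W$, we have
\[
Z_Ue^{-\sup W}\le Z\le Z_Ue^{-\inf W}.
\]
Hence the density $h=\dd\pi/\dd\nu$ satisfies
\[
e^{-(\sup W-\inf W)}\le h\cdot e^{\,\cdot\,}\quad\text{in the sense}\quad e^{\inf W-\sup W}\le h\le e^{\sup W-\inf W}.
\]
More sharply, $h(x)\le e^{-W(x)+\sup W}$ and $h(x)\ge e^{-W(x)+\inf W}$. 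I will use only the one-sided bounds needed below.

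The key step uses the variational characterization of the variance:
\[
\operatorname{Var}_\pi(f)=\inf_{c\in\R}\int(f-c)^2\dd\pi .
\]
Take $c=\nu(f)$ and bound the density from above:
\[
\operatorname{Var}_\pi(f)\le\sup_x h\,\int(f-\nu(f))^2\dd\nu=\sup_x h\,\operatorname{Var}_\nu(f).
\]
The Poincar\'e inequality for $\nu$ with constant $\kappa$ \citep[Section~4.8]{bakry2014analysis}, valid since $D^2U\ge\kappa\Id$, then gives
\[
\operatorname{Var}_\nu(f)\le\kappa^{-1}\int|\nabla f|^2\dd\nu\le\kappa^{-1}\bigl(\inf_x h\bigr)^{-1}\int|\nabla f|^2\dd\pi .
\]
The product of the density factors is
\[
\frac{\sup h}{\inf h}\le\frac{e^{-\inf W}}{e^{-\sup W}}=e^{\osc W},
\]
because the normalizing constants $Z_U/Z$ cancel in the ratio. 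This yields $\operatorname{Var}_\pi(f)\le \kappa^{-1}e^{\osc W}\int|\nabla f|^2\dd\pi$, i.e.\ \eqref{eq:HS-gap} in the form \eqref{eq:poincare}.

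The remaining point is the function class. I would first prove the inequality for $f\in C_c^\infty(\R^d)$, where all integrals are finite and the Bakry--\'Emery inequality applies directly. I would then extend it to $\Dom(\mathcal E_\pi)$ by density. Since $W$ is bounded, $\pi$ and $\nu$ have mutually bounded densities, so $L^2(\pi)$ and $L^2(\nu)$, and the two Dirichlet forms, are equivalent norms. Moreover, $C_c^\infty$ is a form core for $\mathcal E_\pi$ under \eqref{eq:euclidean-finite-regularity}; this is the standard cutoff argument using $\norm{D^jW}_\infty<\infty$ and the Lipschitz bound on $\nabla U$.

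That core/density step is the only place needing care, and it is routine. Passing to the limit in both sides of the inequality then gives the gap on $L^2_0(\pi)$.
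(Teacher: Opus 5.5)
Your proof is correct and is essentially the paper's argument: it bounds $\operatorname{Var}_\pi(f)\le(\sup h)\operatorname{Var}_\nu(f)$, applies the Bakry--\'Emery Poincar\'e inequality for $\nu$, and changes measure back at the cost $(\inf h)^{-1}$, with $\sup h/\inf h=e^{\osc W}$. Your extension from $C_c^\infty$ to the form domain via the equivalence of the weighted norms is extra care that the paper leaves implicit (note that the garbled first display about $h$ should simply read $e^{\inf W-\sup W}\le h\le e^{\sup W-\inf W}$).
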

\begin{proof}
Put $a=\dd\pi/\dd\nu$. For every smooth $g$,
\[
\operatorname{Var}_\pi(g)
\le(\sup a)\operatorname{Var}_\nu(g)
\le\frac{\sup a}{\kappa\inf a}
\int|\nabla g|^2\,\dd\pi.
\]
Here $(\sup a)/(\inf a)=e^{\osc W}$.
\end{proof}

Boundedness of $W$ supplies the explicit gap comparison above. The kernel and moment arguments need only bounded derivatives of $W$ of positive order, together with a separately established positive gap. Under this broader condition, $\pi$ still has Gaussian upper tails, $\nabla V$ grows linearly, and
\begin{equation}
\label{eq:euclidean-confinement-bounds}
V(x)\ge c|x|^2-C,\qquad
|\nabla V(x)|^2\ge c|x|^2-C,\qquad
\norm{D^2V}_\infty<\infty.
\end{equation}
This observation will be used for Gaussian mixtures.

\subsection{Local kernels and a dual discrepancy}

To use the same discrepancy for densities and atoms, we regard a measure as acting on an energy space of test functions. Write
\[
\mathcal H=\operatorname{Dom}(\Q_{r,\pi}^{-1/2}),\qquad
\|h\|_{\mathcal H}=\|\Q_{r,\pi}^{-1/2}h\|_2,
\]
on centered functions. The spectral gap makes this norm equivalent to the centered $\operatorname{Dom}((\Id+\A_\pi)^{(r+1)/2})$ norm. In the Euclidean setting the discrepancy on arbitrary probability measures is the extended dual norm
\begin{equation}
\label{eq:euclidean-dual-domain}
\D_{r,\pi}(\mu,\nu)
=\sup_{\phi\in C_c^\infty,\ \|\phi-\pi(\phi)\|_{\mathcal H}\le1}
|\mu(\phi)-\nu(\phi)|.
\end{equation}
Its value may be infinite. Compactly supported tests make the definition meaningful for every probability measure; local point evaluation and the test-function core will recover the kernel and spectral formulas for empirical measures and for measures with $L^2(\pi)$ density.

For $L\ge0$, let $N_{\A}(L)=\#\{n\ge0:\lambda_n\le L\}$ count eigenvalues with multiplicity, including the constant mode $\lambda_0=0$.

\begin{proposition}[Euclidean kernel and test-function core]
\label{prop:euclidean-kernel}
Under the derivative bounds \eqref{eq:euclidean-finite-regularity}, the smoothing range \eqref{eq:euclidean-r-threshold}, the confinement bounds \eqref{eq:euclidean-confinement-bounds}, and a positive gap, the centered functions $\phi-\pi(\phi)$, $\phi\in C_c^\infty$, are dense in $\mathcal H$. The scalar kernel is positive definite, symmetric, and locally $C^2$ in each variable; its mixed derivatives of order at most two in each variable have locally uniformly convergent spectral series. The matrix lift is well defined and satisfies \eqref{eq:K-intertwine-framework}. Moreover,
\begin{equation}
\label{eq:euclidean-counting}
N_{\A}(L)\le C(1+L)^d,
\end{equation}
and
\begin{equation}
\label{eq:euclidean-initial-diagonal-bound}
\int\G_{r,\pi}(x,x)\,\rho_0(\dd x)
\le\|f_0\|_\infty\operatorname{Tr}\Q_{r,\pi}<\infty
\end{equation}
for every bounded density ratio $\rho_0=f_0\pi$.
\end{proposition}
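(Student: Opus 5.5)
I will prove the items of \Cref{prop:euclidean-kernel} in order: the eigenvalue counting bound, the trace bound, the density of the test-function core, and then local regularity of the kernels together with the matrix lift.

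\emph{Counting.} I would first establish \eqref{eq:euclidean-counting} by unitary conjugation to a Schr\"odinger operator. Set $u=e^{-V/2}g$. Then $\A_\pi$ is unitarily equivalent on $L^2(\dd x)$ to
\[
H=-\Delta+\tfrac14|\nabla V|^2-\tfrac12\Delta V .
\]
By \eqref{eq:euclidean-confinement-bounds}, $H\ge -\Delta+c|x|^2-C$, since $\|D^2V\|_\infty<\infty$ bounds $\Delta V$. The min--max principle then compares $N_{\A}(L)$ with the counting function of a harmonic oscillator. The oscillator eigenvalues are $c'(2|k|+d)-C$, so the count below $L$ is $O((1+L)^d)$. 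This comparison also gives compact resolvent.

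\emph{Trace.} I would next sum by parts against the counting function:
\[
\Tr\Q_{r,\pi}=\sum_{n\ge1}q_r(\lambda_n)\le\int_{\lambda_1}^\infty q_r(\lambda)\,dN_{\A}(\lambda).
\]
This is finite because $q_r(\lambda)\asymp\lambda^{-r-1}$ and $r>d$. Then \eqref{eq:euclidean-initial-diagonal-bound} follows from
\[
\int\G_{r,\pi}(x,x)\,f_0\,\dd\pi\le\|f_0\|_\infty\sum_n q_r(\lambda_n)\int\varphi_n^2\,\dd\pi ,
\]
using orthonormality and Tonelli.

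\emph{Density of the core.} Using the gap, $\mathcal H$ is the centered domain of $(\Id+\A_\pi)^{(r+1)/2}$. I would show that $C_c^\infty$ is a core for the powers $(\Id+\A_\pi)^k$ with $k\ge (r+1)/2$, and then interpolate. I would approximate $g\in\Dom((\Id+\A_\pi)^k)$ first by smooth functions, using elliptic regularity and the bounded derivatives \eqref{eq:euclidean-finite-regularity}. Then I would truncate with $\chi_R(x)=\chi(x/R)$. Commutators $[\A_\pi,\chi_R]$ involve $\nabla\chi_R\cdot\nabla$, $\Delta\chi_R$ and $\nabla V\cdot\nabla\chi_R$; the last is bounded because $|\nabla V|\lesssim |x|\sim R$ on the support while $|\nabla\chi_R|\lesssim R^{-1}$. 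Iterated commutators up to order $2k$ bring in higher derivatives of $V$, which are bounded by \eqref{eq:euclidean-finite-regularity}. A further input is that $\Dom(\A_\pi^j)$ controls weighted norms $\||x|^a\nabla^bg\|_{L^2(\pi)}$. I would get this from the conjugated form: the potential $|\nabla V|^2/4$ dominates $|x|^2$, so the Schr\"odinger energy controls moments. Centering is harmless, because $\pi(\phi)$ depends continuously on $\phi$.

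\emph{Kernel regularity and the matrix lift.} For local $C^2$ regularity I would use a local Weyl-type bound
\[
\sup_{|x|\le R}\sum_{\lambda_n\le L}|\nabla^k\varphi_n(x)|^2\le C_R(1+L)^{\beta_k},
\]
with $\beta_k$ at most about $d+k$. The plan is to obtain it from local elliptic Sobolev estimates on balls: $\|\varphi\|_{H^{2m}(B_R)}\lesssim_R\|(\Id+\A_\pi)^m\varphi\|_{L^2(\pi)}$, combined with Sobolev embedding. This yields a crude sum over the spectral window bounded by $(1+L)^{m}\,N_{\A}(L)$, with $2m>d/2+k$. The threshold $r>2d+10$ is exactly what makes
\[
\sum_n q_r(\lambda_n)\,|\nabla^{j}\varphi_n(x)|\,|\nabla^{k}\varphi_n(y)|
\]
converge locally uniformly for $j,k\le 2$ (and up to $3$ with a factor $\lambda_n^{-1}$ for $\K_{r,\pi}$), after dyadic summation and Cauchy--Schwarz. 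Symmetry and positivity then follow termwise as in \eqref{eq:G-positive} and \Cref{prop:K-positive}, and \eqref{eq:K-intertwine-framework} follows by termwise differentiation exactly as in \Cref{prop:intertwining}. Positive definiteness, as opposed to semidefiniteness, follows from density of the eigenfunction span together with point evaluation.

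\emph{Main obstacle.} I expect the core/density step to be the hardest, specifically controlling the high-order commutators with the unbounded drift $\nabla V$ in a norm that is uniform as $R\to\infty$. If a direct argument fails, I would fall back on the conjugated operator $H$. For it, $C_c^\infty$ cores for powers are standard when the potential has bounded derivatives of order $\ge2$ relative to $|x|^2$, and such cores transfer back to $\A_\pi$ through the unitary map, since multiplication by $e^{\pm V/2}$ preserves $C_c^\infty$.
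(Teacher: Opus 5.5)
Your proposal is sound and shares the paper's skeleton: ground-state conjugation to $\mathcal L=-\Delta+V_{\rm eff}$ with min--max against a harmonic oscillator for \eqref{eq:euclidean-counting}, Tonelli for the trace, spatial cutoffs for the core, and interior elliptic estimates plus Sobolev embedding for the kernels. Two steps are organized differently.

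\emph{The core.} You cut off an arbitrary element of $\Dom((\Id+\A_\pi)^k)$. That forces weighted global regularity for the whole domain, which is the step you flag as hardest. The paper avoids it with the spectral theorem. Finite eigenfunction sums are already dense in every graph norm, so it suffices that $\chi_R\varphi_n\to\varphi_n$ in each graph norm. For a single eigenfunction, the weighted derivative bounds follow from a bootstrap on $(\mathcal L-\lambda)g=0$: test against $w^2g$ with truncated weights $\langle x\rangle^m$, then differentiate the equation and use interior estimates on unit balls. Your ``approximate first by smooth functions'' step is most naturally a spectral truncation anyway, and that puts you in exactly the paper's situation. Do not treat your fallback as an off-the-shelf citation: $D^2V_{\rm eff}$ contains $\tfrac12\nabla V\cdot\nabla D^2V$, which may grow linearly under \eqref{eq:euclidean-finite-regularity}.

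\emph{Kernel regularity.} You prove a quantitative local Weyl bound and sum dyadically, using the counting function. The paper instead notes that evaluation of derivatives of order $\le2$ is continuous on $\mathcal H$, which needs only $d/2+2<2k\le r+1$. It takes Riesz representatives $F_{x,\alpha}$ and gets locally uniform convergence from uniform convergence of spectral projections on the compact sets $\{F_{x,\alpha}:x\in K\}$. That argument is qualitative but needs no eigenvalue counting and much less smoothing; yours gives explicit tail rates.

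There is one slip. Since $|\nabla^k\varphi_n(x)|\lesssim(1+\lambda_n)^m$, your window sum is bounded by $(1+L)^{2m}N_{\A}(L)$, not $(1+L)^mN_{\A}(L)$. This gives $\beta_k\le 3d/2+k+2$ rather than about $d+k$. That is still covered by $r>2d+10$. The factor $N_{\A}(L)$ is also unnecessary: $\sum_{\lambda_n\le L}|\partial^\alpha\varphi_n(x)|^2$ is the squared norm of $h\mapsto\partial^\alpha h(x)$ on the spectral subspace, hence at most $C(1+L)^{2m}$.
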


The proof in \Cref{app:euclidean-calculus} first compares the conjugated generator with a harmonic oscillator to count eigenvalues, then uses interior elliptic estimates to construct the local evaluation functionals. Local regularity gives local existence for the particle ordinary differential equation; the moment estimate below will give continuation. Working with the dual norm also avoids requiring a global polynomial bound for the kernel, which ground-state conjugation need not preserve.

\subsection{Moment coercivity and separation}

The key observation is that the centered quadratic function belongs to $\mathcal H$. To turn this fact into a moment bound for an unknown measure, we first approximate it by bounded tests; this avoids assuming the integrability we want to prove.

\begin{proposition}[Coercive moment duality]
\label{prop:euclidean-moment}
For $p(x)=|x|^2$, one has $p-\pi(p)\in\mathcal H$. Every probability measure of finite dual energy satisfies
\begin{equation}
\label{eq:moment-duality}
|\mu(p)-\pi(p)|
\le \D_{r,\pi}(\mu,\pi)\,
\|p-\pi(p)\|_{\mathcal H}.
\end{equation}
In particular, finite target energy implies a finite second moment. Every target-energy sublevel is compact in $\Wone$, and every empirical solution is global.
\end{proposition}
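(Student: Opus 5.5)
We must prove Proposition~\ref{prop:euclidean-moment}. There are four claims to establish: $p-\pi(p)\in\mathcal H$, the duality bound, compactness of sublevels in $\Wone$, and global existence of empirical solutions. I would take them in that order.

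\emph{Step 1: $p-\pi(p)\in\mathcal H$.} By the gap, $\mathcal H$ is the centered part of $\Dom((\Id+\A_\pi)^{(r+1)/2})$, so it suffices to show $(\Id+\A_\pi)^k p\in L^2(\pi)$ for an integer $k\ge (r+1)/2$. I would interpolate by spectral calculus if $r+1$ is odd.
\begin{itemize}
\item Compute $\A_\pi p=-\Delta p+\nabla V\cdot\nabla p=-2d+2x\cdot\nabla V$.
\item By \eqref{eq:euclidean-finite-regularity}, $\nabla V$ grows linearly and all higher derivatives of $V$ are bounded.
\item Hence each further application of $\A_\pi$ to a smooth function of polynomial growth, with derivatives of polynomial growth, gives another such function. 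Concretely, $\A_\pi^j p$ has growth $O(1+|x|^{2})$ up to lower terms, since $\nabla V\cdot\nabla$ preserves degree.
\item The Gaussian tails of $\pi$ from \eqref{eq:euclidean-confinement-bounds} put all these functions in $L^2(\pi)$.
\end{itemize}
The remaining issue is that these classical computations must agree with the operator $\A_\pi$. For this I would use cutoffs $p_R=\chi(x/R)p$. I would show $\A_\pi^j p_R\to \A_\pi^j p$ in $L^2(\pi)$; the commutator terms involve $R^{-1}|\nabla V||x|$ on $\{|x|\sim R\}$, and these are killed by the Gaussian tails. Closedness of $(\Id+\A_\pi)^k$ on $C_c^\infty$ (the test-function core of Proposition~\ref{prop:euclidean-kernel}) then gives $p\in\Dom$. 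This step is the main technical obstacle: one must track that every derivative of $V$ entering up to order $2k\le r+2$ is bounded, which is exactly what \eqref{eq:euclidean-finite-regularity} supplies.

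\emph{Step 2: duality.} The centered functions $(p_R-\pi(p_R))/\|p_R-\pi(p_R)\|_{\mathcal H}$ are admissible in \eqref{eq:euclidean-dual-domain}. This gives
\[
|\mu(p_R)-\pi(p_R)|\le\D_{r,\pi}(\mu,\pi)\,\|p_R-\pi(p_R)\|_{\mathcal H}.
\]
If I choose $\chi$ nonnegative, equal to one near zero and radially nonincreasing, then $p_R\uparrow p$. Monotone convergence gives $\mu(p_R)\to\mu(p)$, with no integrability assumed in advance, and dominated convergence gives $\pi(p_R)\to\pi(p)$. Step 1 gives $\|p_R-\pi(p_R)\|_{\mathcal H}\to\|p-\pi(p)\|_{\mathcal H}$. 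Passing to the limit yields \eqref{eq:moment-duality} and, in particular, finiteness of $\mu(p)$.

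\emph{Step 3: compactness.} On $\mathcal E_R$ the second moments are bounded by $\pi(p)+R\|p-\pi(p)\|_{\mathcal H}$. This gives tightness and uniform integrability of first moments, so $\mathcal E_R$ is relatively compact in $\Wone$. Closedness follows from lower semicontinuity of the dual definition under weak convergence.

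\emph{Step 4: global empirical solutions.} Local existence follows from the local $C^2$ regularity in Proposition~\ref{prop:euclidean-kernel}. Proposition~\ref{prop:common-energy}, valid on the maximal interval, makes $\D_{r,\pi}(\mu_t^N,\pi)$ nonincreasing. By \eqref{eq:moment-duality}, this gives
\[
\frac1N\sum_i|X_i^N(t)|^2\le \pi(p)+\D_{r,\pi}(\mu_0^N,\pi)\,\|p-\pi(p)\|_{\mathcal H}.
\]
So every particle stays in a fixed ball of radius of order $\sqrt N$. On that ball the locally Lipschitz velocity is bounded, which excludes blow-up and continues the solution globally.
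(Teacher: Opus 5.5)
Your proposal is correct and follows the paper's argument essentially step for step: graph-norm approximation of $|x|^2$ by bounded truncations (polynomial growth of $\A_\pi^k p$, Gaussian tails, closedness), the dual inequality for each truncation with monotone convergence, tightness plus lower semicontinuity for $\Wone$-compactness, and energy monotonicity plus the moment bound for global continuation. The differences are cosmetic---your truncations $\chi(x/R)p$ with radially nonincreasing $\chi$ lie directly in $C_c^\infty$, whereas the paper's concave profiles $R^2\psi(|x|^2/R^2)$ are constant outside a ball and must be shifted by that constant---plus one harmless slip: for nonquadratic $V$ the iterates $\A_\pi^j p$ need not stay $O(1+|x|^2)$ (for $V=x^2/2+\sin x$ in one dimension, $\A_\pi^3p$ grows like $x^3\cos x$), but polynomial growth, which is all you actually use, does hold.
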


\begin{proof}
Choose smooth bounded truncations $p_R\uparrow p$ which are constant outside a ball and satisfy
\[
p_R-\pi(p_R)\longrightarrow p-\pi(p)
\quad\hbox{in }\mathcal H.
\]
Such truncations, including their graph-norm convergence, are constructed in \Cref{app:euclidean-calculus}. Each centered $p_R$ is a centered compactly supported test function after subtracting its constant value at infinity. Therefore \eqref{eq:euclidean-dual-domain} gives the dual inequality for $p_R$. The bounded form norms and monotone convergence first establish $\mu(p)<\infty$ and then give \eqref{eq:moment-duality}.

This moment estimate has two consequences. First, a uniform second-moment bound gives tightness and uniform integrability of first moments. The dual norm is lower semicontinuous for weak convergence, since it is a supremum of continuous functionals of bounded continuous tests; hence the sublevel is closed and compact in $\Wone$.

Second, for an empirical solution, the finite-particle Hilbert-space chain rule gives the target-energy identity \eqref{eq:common-energy} on its maximal existence interval. Thus $N^{-1}\sum_i|X_i(t)|^2$ stays bounded there. For fixed $N$ this bounds every particle, so local Lipschitzness of the force gives continuation for all times.
\end{proof}

\begin{proposition}[Separation and topological compatibility]
\label{prop:euclidean-separation}
The dual discrepancy separates probabilities of finite energy. If $\mu_n,\nu_n$ belong to a common target-energy sublevel and $\D_{r,\pi}(\mu_n,\nu_n)\to0$, then $\Wone(\mu_n,\nu_n)\to0$.
\end{proposition}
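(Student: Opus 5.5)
The plan has two parts: first show separation through a density argument in the test space, then obtain the topological statement by compactness, as in the modulus argument of \Cref{sec:relative-stability}.

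\emph{Separation.} Suppose $\mu,\nu$ have finite target energy and $\D_{r,\pi}(\mu,\nu)=0$. The definition \eqref{eq:euclidean-dual-domain} gives $\mu(\phi)=\nu(\phi)$ for every $\phi\in C_c^\infty(\R^d)$ with $\phi-\pi(\phi)\in\mathcal H$. By \Cref{prop:euclidean-kernel}, the centered images of $C_c^\infty$ are dense in $\mathcal H$ and lie in it. Scaling removes the norm constraint, so $\mu(\phi)-\pi(\phi)=\nu(\phi)-\pi(\phi)$ for all $\phi\in C_c^\infty$. Since $C_c^\infty$ functions determine a Borel probability measure on $\R^d$, it follows that $\mu=\nu$. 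Equality of the full dual functionals is not needed: the compactly supported tests themselves already separate. The finite-energy hypothesis is then only used to keep both measures in the setting of the statement.

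\emph{Topological compatibility.} Suppose $\mu_n,\nu_n\in\mathcal E_R$, $\D_{r,\pi}(\mu_n,\nu_n)\to0$, but $\Wone(\mu_{n},\nu_{n})\ge\eps>0$ along a subsequence. By \Cref{prop:euclidean-moment}, $\mathcal E_R$ is compact in $\Wone$, so after passing to a further subsequence $\mu_n\to\mu$ and $\nu_n\to\nu$ in $\Wone$, with $\mu,\nu\in\mathcal E_R$.

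The next step is lower semicontinuity. For each fixed admissible $\phi\in C_c^\infty$, the map $(\mu',\nu')\mapsto\mu'(\phi)-\nu'(\phi)$ is weakly continuous. Taking the supremum over $\phi$ gives $\D_{r,\pi}(\mu,\nu)\le\liminf_n\D_{r,\pi}(\mu_n,\nu_n)=0$. Separation then gives $\mu=\nu$. Hence $\Wone(\mu_n,\nu_n)\le\Wone(\mu_n,\mu)+\Wone(\nu,\nu_n)\to0$, which is the required contradiction.

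\emph{Main obstacle.} The only delicate point is the input from \Cref{prop:euclidean-kernel}: that compactly supported tests form a core for $\mathcal H$ and that $\mathcal H$ contains them. This is what makes the dual definition nondegenerate. I take both facts as established there, via the cutoff approximation in \Cref{app:euclidean-calculus}; beyond that, the argument is soft.
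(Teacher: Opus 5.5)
Your proposal is correct and matches the paper's proof: separation holds because the compactly supported smooth tests (each of finite $\mathcal H$-norm, so admissible after scaling) determine a Borel measure, and the $\Wone$ statement follows from $\Wone$-compactness of the sublevel, lower semicontinuity of the dual norm, and separation of the two limits. The separation step uses only that the centered $C_c^\infty$ tests belong to $\mathcal H$; their density in $\mathcal H$ is not needed here.
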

\begin{proof}
Zero discrepancy implies equality on every compactly supported smooth test, which determines a finite Borel measure. If the second assertion failed, compactness would give a subsequence converging in $\Wone$ to two distinct limits. Lower semicontinuity of \eqref{eq:euclidean-dual-domain} would give zero discrepancy between those limits, a contradiction.
\end{proof}

\subsection{From transport control to finite-time comparison}

The static estimates now supply the topology and global particle flow required by the abstract theorem. To compare that flow with the population, it remains to control the transport term in the relative-energy identity. The following condition has two jobs: bounded population norms justify the energy and entropy chain rules, while the operator bounds control how transport changes the error's energy norm. The space $\mathcal H_+$ supplies the extra derivative needed to apply transport; the relative potentials of empirical errors belong to this space, as proved in \Cref{app:euclidean-calculus}.

\begin{assumption}[Euclidean population and transport input]
\label{ass:euclidean-transport}
For the specified initial density $f_0$, assume the population equation has a unique global nonnegative classical solution $\rho_t=f_t\pi$. On every finite interval, $f_t$ and $v_t=-\nabla\Q_{r,\pi}(f_t-1)$ have bounded $C_b^1(\R^d)$ norms. On the centered energy space $\mathcal H$ defined above, let $S=\Q_{r,\pi}^{-1/2}$ and set
\[
\mathcal H_+
=\operatorname{Dom}((\Id+\A_\pi)^{(r+2)/2})\cap L^2_0(\pi).
\]
Define the centered transport operator by
\begin{equation}
\label{eq:euclidean-centered-transport}
B_t h=v_t\cdot\nabla h-\pi(v_t\cdot\nabla h).
\end{equation}
Assume $B_t:\mathcal H_+\to\mathcal H$ is continuous, with operator norm bounded on every finite interval. The operator
\[
[S,B_t]S^{-1}g
=S B_t S^{-1}g-B_tg,
\qquad g\in\operatorname{Dom}(\A_\pi^{1/2})\cap L^2_0(\pi),
\]
is well defined on this domain and extends to a bounded operator on $L^2_0(\pi)$, with norm bounded on every finite interval.
\end{assumption}

The mapping property makes $B_t h$ an admissible energy-space test. The commutator bound then controls its pairing with the relative potential, as the next proposition shows.

\begin{proposition}[Sufficient Euclidean transport condition]
\label{prop:euclidean-transport}
Suppose the population and operator conditions in \Cref{ass:euclidean-transport} hold. For $t\le T$ and $h\in\mathcal H_+$,
\begin{equation}
\label{eq:euclidean-transport-form}
|\langle h,B_t h\rangle_{\mathcal H}|
\le c_T\|h\|_{\mathcal H}^2,
\qquad
c_T=\tfrac12\sup_{t\le T}\|\divpi v_t\|_\infty
+\sup_{t\le T}\|[S,B_t]S^{-1}\|_{2\to2}.
\end{equation}
For every empirical $\mu$, the relative potential
$h=\PhiPot_{\mu-\rho_t}$ belongs to $\mathcal H_+$, and
\begin{equation}
\label{eq:euclidean-actual-pairing}
(\mu-\rho_t)(v_t\cdot\nabla h)
=\langle h,B_t h\rangle_{\mathcal H}.
\end{equation}
Consequently the finite-horizon estimate \eqref{eq:abstract-form} holds.
\end{proposition}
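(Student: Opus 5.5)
The proof has three parts: the commutator bound \eqref{eq:euclidean-transport-form}, the identification \eqref{eq:euclidean-actual-pairing}, and a relative-energy Gronwall argument giving \eqref{eq:abstract-form}.

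\textbf{The commutator bound.} Fix $h\in\mathcal H_+$ and put $g=Sh$. Since $\mathcal H_+$ carries one more half-derivative than $\mathcal H$, we have $g\in\operatorname{Dom}(\A_\pi^{1/2})\cap L^2_0(\pi)$. By definition of the $\mathcal H$ inner product,
\[
\langle h,B_th\rangle_{\mathcal H}=\langle Sh,SB_th\rangle_{L^2(\pi)}=\langle g,SB_tS^{-1}g\rangle=\langle g,B_tg\rangle+\langle g,[S,B_t]S^{-1}g\rangle .
\]
The second term is bounded by $\|[S,B_t]S^{-1}\|_{2\to2}\|g\|_2^2$. For the first term, the centering constant in $B_t$ pairs to zero against the centered $g$. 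Weighted integration by parts \eqref{eq:weighted-ibp} then gives
\[
\int g\,v_t\cdot\nabla g\,\dd\pi=\tfrac12\int v_t\cdot\nabla(g^2)\,\dd\pi=-\tfrac12\int g^2\divpi v_t\,\dd\pi .
\]
This is bounded by $\tfrac12\|\divpi v_t\|_\infty\|g\|_2^2$. Since $\|g\|_2=\|h\|_{\mathcal H}$, the two bounds together give \eqref{eq:euclidean-transport-form}. The integration by parts is justified by approximating $g$ with $C_c^\infty$ functions, using the density of the core from \Cref{prop:euclidean-kernel} together with the $C^1_b$ bound on $v_t$.

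\textbf{The pairing identity.} I will take $h=\PhiPot_{\mu-\rho_t}\in\mathcal H_+$ as given by \Cref{app:euclidean-calculus}. This membership is what requires the smoothing threshold $r>2d+10$, because point evaluation must be continuous on a space carrying one extra derivative. By the Riesz representation, every $\psi\in\mathcal H$ satisfies $(\mu-\rho_t)(\psi)=\langle h,\psi\rangle_{\mathcal H}$. Apply this with $\psi=B_th$, which lies in $\mathcal H$ by the mapping hypothesis. The constant $\pi(v_t\cdot\nabla h)$ is annihilated by $\mu-\rho_t$, so the left side equals $(\mu-\rho_t)(v_t\cdot\nabla h)$. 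Extending the Riesz identity from $C_c^\infty$ tests to all of $\mathcal H$ uses the continuity of point evaluation and the integrability of $\rho_t$.

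\textbf{The finite-horizon estimate.} I will run a relative-energy argument; the main obstacle is making the chain rule rigorous. Let $E_t=\tfrac12\D_{r,\pi}(\mu_t^N,\rho_t)^2=\tfrac12\|h_t\|_{\mathcal H}^2$ with $h_t=\PhiPot_{\mu_t^N-\rho_t}$. Differentiating as in \Cref{app:atomic-regularization} gives
\[
\dot E_t=\mu_t^N(v_{\mu^N_t}\cdot\nabla h_t)-\rho_t(v_t\cdot\nabla h_t).
\]
Write $v_{\mu^N}=v_t-\nabla h_t$, where $v_{\mu^N}-v_t=-\nabla\PhiPot_{\mu^N-\rho_t}$. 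This yields
\[
\dot E_t=-\int|\nabla h_t|^2\,\dd\mu_t^N+(\mu_t^N-\rho_t)(v_t\cdot\nabla h_t).
\]
The first term is nonpositive. By \eqref{eq:euclidean-actual-pairing} and \eqref{eq:euclidean-transport-form}, the second is at most $2c_TE_t$. Gronwall then gives $\sup_{t\le T}E_t\le e^{2c_TT}E_0$. Finally, \Cref{prop:initial-fluctuation} combined with \eqref{eq:euclidean-initial-diagonal-bound} bounds $\E E_0$ by $C/N$, which gives \eqref{eq:abstract-form} with $C_T=Ce^{2c_TT}$.

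For the differentiation of $E_t$, I would expand $\|h_t\|^2_{\mathcal H}$ as an empirical double sum, plus population cross terms, plus a population self term. The particle ODE and the population $C^1_b$ bounds justify differentiating each piece, with diagonal self-interactions handled through the finite kernel diagonal. If a direct kernel computation proves awkward, the fallback is to truncate spectrally and pass to the limit using uniform bounds on the $\mathcal H_+$ norm.
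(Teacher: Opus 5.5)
Your proposal is correct and follows the paper's proof essentially step for step: the same splitting $\langle h,B_th\rangle_{\mathcal H}=\langle Sh,B_tSh\rangle_2+\langle Sh,[S,B_t]S^{-1}Sh\rangle_2$ with weighted integration by parts for the symmetric part, the same Riesz-pairing identification taken from the appendix, and the same relative-energy Gr\"onwall argument closed by the initial fluctuation identity and the trace bound. One side remark needs correcting: membership of $\G_{r,\pi}(\cdot,x)$ in $\mathcal H_+$ only needs point evaluation to be continuous on $\operatorname{Dom}((\Id+\A_\pi)^{r/2})$, a space with one derivative \emph{fewer} than $\mathcal H$ (because $(1+\lambda)^{r+2}q_r(\lambda)^2\lesssim(1+\lambda)^{-r}$), so this step is not what forces $r>2d+10$.
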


\begin{proof}
Conjugating transport by $S$ separates its symmetric part, controlled by the weighted divergence, from the commutator. The centered projection does not change an inner product against $Sh$. Integration by parts and the boundedness of $v_t$ and $\divpi v_t$ give
\[
\langle Sh,B_t Sh\rangle_2
=-\tfrac12\int(\divpi v_t)|Sh|^2\,\dd\pi.
\]
The definition of the commutator therefore gives
\[
\langle h,B_t h\rangle_{\mathcal H}
=\langle Sh,B_t Sh\rangle_2
+\langle Sh,[S,B_t]S^{-1}Sh\rangle_2,
\]
which proves \eqref{eq:euclidean-transport-form}. The domain and Riesz-pairing statements, proved in \Cref{app:euclidean-calculus}, justify this calculation for the actual empirical errors. Dropping the nonpositive dissipation term in \eqref{eq:relative-energy} and applying \eqref{eq:euclidean-transport-form} gives Gr\"onwall's inequality with coefficient $2c_T$. Since $c_T\ge0$,
\[
\sup_{0\le t\le T}\D_{r,\pi}(\mu_t^N,\rho_t)^2
\le e^{2c_TT}\D_{r,\pi}(\mu_0^N,\rho_0)^2.
\]
Taking expectations under the independent initialization gives \eqref{eq:abstract-form}: the initial expectation is $O(N^{-1})$ by \eqref{eq:euclidean-initial-diagonal-bound} and \Cref{prop:initial-fluctuation}.
\end{proof}

The weighted divergence is explicitly
\begin{equation}
\label{eq:euclidean-v-and-div}
v_t=-\nabla\Q_{r,\pi}(f_t-1),\qquad
\divpi v_t=(\Id+\A_\pi)^{-r}(f_t-1).
\end{equation}
The resolvent is an $L^\infty$ contraction, so the divergence contribution in \eqref{eq:euclidean-transport-form} is already controlled by the assumed finite-time bound on $f_t$. The operator assumptions supply the other contribution and justify its pairing with empirical errors.

\subsection{The conditional Euclidean theorem}

\begin{theorem}[Conditional extension to confining Euclidean targets]
\label{thm:euclidean-main}
Under \Cref{ass:euclidean-target}, the scalar and matrix kernels have the local regularity and finite diagonal specified in \Cref{prop:euclidean-kernel}; the dual discrepancy controls a second moment, separates measures, and has compact target-energy sublevels in $\Wone$. The empirical particle system is globally well posed for every finite configuration, and the gap satisfies \eqref{eq:HS-gap}.
Let $\rho_0=f_0\pi$, with $f_0\in C_b^\infty$, $f_0\ge0$, $\pi(f_0)=1$, and initialize the particles independently from $\rho_0$. If \Cref{ass:euclidean-transport} also holds for this $f_0$, all conclusions of \Cref{thm:abstract-main} hold. In particular,
\begin{equation}
\label{eq:euclidean-pop-rate}
\D_{r,\pi}(\rho_t,\pi)^2
\le\frac{e^{\osc W}}{\kappa t}\KL(\rho_0\mid\pi),\qquad t>0.
\end{equation}
\end{theorem}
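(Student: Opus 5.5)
The plan is to verify each item of \Cref{ass:GB} under \Cref{ass:euclidean-target} and, where needed, \Cref{ass:euclidean-transport}, and then to invoke \Cref{thm:abstract-main}. The static conclusions come first and need only \Cref{ass:euclidean-target}.

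\textbf{Static part.} \Cref{prop:euclidean-gap} gives the gap \eqref{eq:HS-gap}. Since $D^2V$ is bounded, the Hessian of $U$ is bounded and $W$ has bounded derivatives, so the confinement bounds \eqref{eq:euclidean-confinement-bounds} hold. With a positive gap, \Cref{prop:euclidean-kernel} then supplies:
\begin{itemize}
\item compact resolvent, via the counting bound \eqref{eq:euclidean-counting};
\item the smooth eigenbasis;
\item density of the centered $C_c^\infty$ tests;
\item local $C^2$ regularity of the kernel, centering, and the intertwining identity \eqref{eq:K-intertwine-framework};
\item the finite initial diagonal \eqref{eq:euclidean-initial-diagonal-bound}, which is \eqref{eq:initial-diagonal}.
\end{itemize}
This covers (GB1), (GB2) and the hypothesis of \Cref{thm:abstract-main}. \Cref{prop:euclidean-moment} gives the second-moment bound, the $W_1$-compactness of sublevels and global empirical existence. \Cref{prop:euclidean-separation} gives separation. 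For (GB6) I will convert \eqref{eq:moment-duality} into \eqref{eq:abstract-moment} by writing $\int|x-x_\ast|^2\,\dd\mu\le 2\mu(p)+2|x_\ast|^2$ and $\mu(p)\le\pi(p)+\D_{r,\pi}(\mu,\pi)\|p-\pi(p)\|_{\mathcal H}$. Together these give (GB5) and (GB6).

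\textbf{Dynamical part.} Assume now \Cref{ass:euclidean-transport}. It provides unique global classical population solutions with $C_b^1$ density and velocity on finite intervals. Combined with the Hilbert-space chain rule of \Cref{app:atomic-regularization}, this gives \Cref{prop:common-energy} along both flows. The entropy identity \Cref{prop:entropy-dissipation} follows by the same regularized-entropy argument as in the compact case, with spatial cutoffs. The cutoff boundary terms vanish because $f_t$, $v_t$ are bounded and $\pi$ has Gaussian tails. This gives (GB3).

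The finite-interval comparison (GB4) is exactly \Cref{prop:euclidean-transport}. That proposition bounds $\sup_{t\le T}\D^2$ by $e^{2c_TT}\D_{r,\pi}(\mu_0^N,\rho_0)^2$. Its expectation is $O(1/N)$ by \Cref{prop:initial-fluctuation} and \eqref{eq:euclidean-initial-diagonal-bound}, and this holds pathwise, so the supremum sits inside the expectation. With all of \Cref{ass:GB} verified, \Cref{thm:abstract-main} yields every conclusion. The rate \eqref{eq:euclidean-pop-rate} is \eqref{eq:abstract-population-rate} with $\lambda_1\ge\kappa e^{-\osc W}$ substituted.

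\textbf{Main obstacle.} I expect the delicate step to be the chain rules in (GB3) on the unbounded space, specifically the common-energy identity for empirical measures. There the potential $\PhiPot_{\mu-\pi}$ is only locally controlled, and we have deliberately avoided assuming a global kernel bound. My first approach is to work entirely through the Riesz representation in $\mathcal H$. Differentiability of $t\mapsto\PhiPot_{\mu_t}$ as an $\mathcal H$-valued map only requires the local evaluation functionals and their derivatives along bounded particle trajectories, which \Cref{prop:euclidean-kernel} provides, and this avoids any spatial cutoff.
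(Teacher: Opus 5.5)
Your proposal is correct and follows the paper's proof essentially step for step: the static inputs come from \Cref{prop:euclidean-gap,prop:euclidean-kernel,prop:euclidean-moment,prop:euclidean-separation}, the dynamical inputs (kernel-mean chain rules in $\mathcal H$, the entropy identity with spatial cutoffs, and finite-time comparison via \Cref{prop:euclidean-transport}) come from \Cref{ass:euclidean-transport}, and the rate comes from substituting \eqref{eq:HS-gap} into \eqref{eq:abstract-population-rate}; your explicit conversion of \eqref{eq:moment-duality} into \eqref{eq:abstract-moment} fills in a detail the paper leaves implicit. One small slip in your static part: the quadratic lower bounds on $V$ and $\abs{\nabla V}^2$ in \eqref{eq:euclidean-confinement-bounds} come from $D^2U\ge\kappa\Id$ together with boundedness of $W$ and $\nabla W$, and $\norm{D^2V}_\infty<\infty$ follows from $D^2U\le K\Id$ and bounded $D^2W$, not the other way round as your sentence reads.
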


\begin{proof}[Proof of \Cref{thm:euclidean-main}]
We assemble the inputs to \Cref{thm:abstract-main}. \Cref{prop:euclidean-gap,prop:euclidean-kernel} give the gap, local kernel regularity, the test-function core, and the finite initial diagonal integral. \Cref{prop:euclidean-moment,prop:euclidean-separation} give moment coercivity, particle nonexplosion, separation, and topological compatibility.

For the dynamical inputs, \Cref{ass:euclidean-transport} supplies the global classical population flow. Its bounded $C^1$ norms on finite intervals justify the energy and entropy identities by the Hilbert-space chain rule and the cutoff argument in \Cref{app:euclidean-calculus}; \Cref{prop:euclidean-transport} gives finite-time comparison. The abstract theorem now extends this comparison to all times by population relaxation and monotonicity of the common energy. Inserting \eqref{eq:HS-gap} into the population bound gives \eqref{eq:euclidean-pop-rate}.
\end{proof}

\section{Common-covariance Gaussian mixtures}
\label{sec:gaussian-mixtures}

A common-covariance Gaussian mixture shows how the Euclidean construction accommodates several wells. Its potential has a quadratic part plus a perturbation with bounded derivatives of positive order. Thus the kernel and moment arguments of \Cref{sec:euclidean-targets} apply once we supply a positive spectral gap. The task here is to prove an explicit gap bound by comparing variation within components with variation between components.

Let
\begin{equation}
\label{eq:mixture-target}
\pi=\sum_{j=1}^Jw_j\gamma_j,
\qquad
\gamma_j=\mathcal N(m_j,B^{-1}),
\qquad
B=B^\top>0,
\end{equation}
where $w_j>0$ and $\sum_jw_j=1$.  Set
\begin{equation}
\label{eq:mixture-parameters}
b=\lambda_{\min}(B),
\qquad
w_\ast=\min_jw_j,
\qquad
D_\ast^2=
\max_{i,j}(m_i-m_j)^\top B(m_i-m_j).
\end{equation}
Up to an additive constant, the target potential is
\begin{equation}
\label{eq:mixture-potential}
V(x)=\frac12x^\top Bx
-\log\left(
\sum_{j=1}^J
w_j\exp\left(x^\top Bm_j-\frac12m_j^\top Bm_j\right)
\right).
\end{equation}
The second term is generally unbounded along rays.  Its gradient, however, is a convex combination of the finitely many vectors $-Bm_j$, and every derivative of order at least two is a bounded cumulant of those vectors.  Thus the target belongs to the class
\begin{equation}
\label{eq:quadratic-Lipschitz-class}
V=U+W,
\qquad
U(x)=\frac12x^\top Bx,
\qquad
\nabla W\in L^\infty,
\quad
D^kW\in L^\infty\ (k\ge2),
\end{equation}
covered by the extension following \Cref{prop:euclidean-gap}. Boundedness of $W$ itself fails, so the mixture requires the separate gap estimate below.

\subsection{An explicit mixture Poincar\'e inequality}

Poincar\'e and logarithmic Sobolev inequalities for mixtures have been studied in detail; see \citet{schlichting2019poincare}, including the equal-covariance Gaussian examples. The elementary bound below supplies an explicit constant for this application; its dependence on component separation is not claimed to be optimal.

\begin{proposition}[Poincar\'e gap for a common-covariance mixture]
\label{prop:mixture-gap}
The mixture \eqref{eq:mixture-target} satisfies
\begin{equation}
\label{eq:mixture-gap}
\lambda_1(\pi)
\ge
\frac{b}{
1+\dfrac{D_\ast^2}{2w_\ast}e^{D_\ast^2/8}
}.
\end{equation}
\end{proposition}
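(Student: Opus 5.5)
The plan is to prove the variance form $\operatorname{Var}_\pi(g)\le b^{-1}\bigl(1+\tfrac{D_\ast^2}{2w_\ast}e^{D_\ast^2/8}\bigr)\int\abs{\nabla g}^2\,\dd\pi$ for smooth $g$ with $\int\abs{\nabla g}^2\dd\pi<\infty$, by splitting the variance into variation within components and variation between components. Since $\pi=\sum_jw_j\gamma_j$, the law of total variance gives
\[
\operatorname{Var}_\pi(g)=\sum_jw_j\operatorname{Var}_{\gamma_j}(g)+\frac12\sum_{i,j}w_iw_j\bigl(\gamma_i(g)-\gamma_j(g)\bigr)^2 .
\]
For the within-component part, each $\gamma_j=\mathcal N(m_j,B^{-1})$ has potential Hessian $B\ge b\Id$. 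The curvature criterion cited before \Cref{prop:euclidean-gap} then gives $\operatorname{Var}_{\gamma_j}(g)\le b^{-1}\int\abs{\nabla g}^2\dd\gamma_j$. Summing with weights $w_j$ bounds the first term by $b^{-1}\int\abs{\nabla g}^2\dd\pi$, which accounts for the leading $1$ in the denominator of \eqref{eq:mixture-gap}.

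For the between-component part, fix $i,j$ and set $\delta=m_j-m_i$. Interpolate the means along $m_s=m_i+s\delta$ and let $\gamma_s=\mathcal N(m_s,B^{-1})$. Writing $\gamma_s(g)=\E g(Z+m_s)$ with $Z\sim\mathcal N(0,B^{-1})$, we have
\[
\gamma_j(g)-\gamma_i(g)=\int_0^1\int\nabla g\cdot\delta\,\dd\gamma_s\,\dd s .
\]
Apply Cauchy--Schwarz in $(s,x)$ together with $(\nabla g\cdot\delta)^2\le(\delta^\top B\delta)(\nabla g^\top B^{-1}\nabla g)\le D_\ast^2b^{-1}\abs{\nabla g}^2$. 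This gives
\[
\bigl(\gamma_i(g)-\gamma_j(g)\bigr)^2\le\frac{D_\ast^2}{b}\int_0^1\int\abs{\nabla g}^2\dd\gamma_s\,\dd s .
\]

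The key step, and the one I expect to be the main obstacle, is comparing the intermediate Gaussians $\gamma_s$ with $\pi$ with the correct constant $e^{D_\ast^2/8}$. I would use the exact identity for equal-covariance Gaussians,
\[
\gamma_s=\gamma_i^{1-s}\gamma_j^{s}\exp\bigl(\tfrac{s(1-s)}2\delta^\top B\delta\bigr),
\]
which follows by completing the square in the exponents. Combined with the weighted AM--GM inequality $\gamma_i^{1-s}\gamma_j^s\le(1-s)\gamma_i+s\gamma_j$ and with $s(1-s)\le 1/4$, it yields
\[
\gamma_s\le e^{D_\ast^2/8}\bigl((1-s)\gamma_i+s\gamma_j\bigr)\le\frac{e^{D_\ast^2/8}}{w_\ast}\,\pi .
\]
If the identity's constant were off, a chi-square bound via $\operatorname{Cov}_{\gamma_i}(g,\dd\gamma_j/\dd\gamma_i)$ would be the fallback, but it only gives the weaker factor $e^{D_\ast^2}$, so I would verify the completing-the-square step carefully.

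Inserting this comparison bounds each squared difference by $\frac{D_\ast^2e^{D_\ast^2/8}}{bw_\ast}\int\abs{\nabla g}^2\dd\pi$. Since $\frac12\sum_{i,j}w_iw_j\le\frac12$, the between-component term is at most $\frac{D_\ast^2}{2w_\ast}e^{D_\ast^2/8}\,b^{-1}\int\abs{\nabla g}^2\dd\pi$. Adding the within-component bound gives the Poincar\'e inequality \eqref{eq:poincare} with the constant in \eqref{eq:mixture-gap}. The inequality extends from smooth $g$ to all of $\Dom(\mathcal E_\pi)$ by density.
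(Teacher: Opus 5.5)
Your proposal is correct and follows essentially the same argument as the paper. Both split the variance by the law of total variance and bound the within-component part with the Gaussian curvature bound $b$. For the between-component means, both interpolate along $\gamma_s=\mathcal N(m_s,B^{-1})$ and use the identity $\gamma_s=e^{s(1-s)D_{ij}^2/2}\gamma_i^{1-s}\gamma_j^s$, weighted AM--GM, and $\gamma_k\le w_\ast^{-1}\pi$; your completing-the-square check of that identity is right, so the chi-square fallback is unnecessary.
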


\begin{proof}
The variance decomposes into a part controlled by each Gaussian Poincar\'e inequality and a part measuring differences of component means. We control the latter by moving one Gaussian mean to another along a line segment.

For a smooth $f$, let
\begin{equation}
\label{eq:component-means}
a_j=\int f\,\dd\gamma_j,
\qquad
\bar a=\sum_jw_ja_j.
\end{equation}
The law of total variance gives
\begin{equation}
\label{eq:mixture-total-variance}
\operatorname{Var}_\pi(f)
=\sum_jw_j\operatorname{Var}_{\gamma_j}(f)
+\frac12\sum_{i,j}w_iw_j(a_i-a_j)^2.
\end{equation}
Each component has Poincar\'e gap at least $b$ by the Gaussian curvature criterion \citep[Section~4.8]{bakry2014analysis}, so
\begin{equation}
\label{eq:component-gap}
\sum_jw_j\operatorname{Var}_{\gamma_j}(f)
\le\frac1b\int\abs{\nabla f}^2\,\dd\pi.
\end{equation}

To control the second term in \eqref{eq:mixture-total-variance}, fix $i,j$, set $\delta=m_j-m_i$, and interpolate by
\begin{equation}
\label{eq:mixture-interpolation}
\gamma_t=\mathcal N(m_i+t\delta,B^{-1}),
\qquad 0\le t\le1.
\end{equation}
Differentiation under the Gaussian expectation gives
\begin{equation}
\label{eq:mean-interpolation-derivative}
a_j-a_i
=\int_0^1\int\delta\cdot\nabla f\,\dd\gamma_t\,\dd t.
\end{equation}
Since $\abs\delta^2\le b^{-1}\delta^\top B\delta$, Cauchy--Schwarz yields
\begin{equation}
\label{eq:mean-difference-gradient}
(a_i-a_j)^2
\le\frac{D_{ij}^2}{b}
\int_0^1\int\abs{\nabla f}^2\,\dd\gamma_t\,\dd t,
\qquad
D_{ij}^2=\delta^\top B\delta.
\end{equation}
The remaining step is to compare the interpolating measure with $\pi$. Common covariance gives the density identity
\begin{equation}
\label{eq:gaussian-geometric-interpolation}
\gamma_t
=e^{\frac12t(1-t)D_{ij}^2}
\gamma_i^{1-t}\gamma_j^t.
\end{equation}
Weighted arithmetic--geometric mean and
$\gamma_k\le w_\ast^{-1}\pi$ imply
\begin{equation}
\label{eq:gamma-t-comparison}
\gamma_t
\le e^{D_{ij}^2/8}
\bigl[(1-t)\gamma_i+t\gamma_j\bigr]
\le\frac{e^{D_{ij}^2/8}}{w_\ast}\pi.
\end{equation}
Combining \eqref{eq:mean-difference-gradient} and
\eqref{eq:gamma-t-comparison},
\begin{equation}
\label{eq:between-component-bound}
(a_i-a_j)^2
\le
\frac{D_\ast^2e^{D_\ast^2/8}}{b w_\ast}
\int\abs{\nabla f}^2\,\dd\pi.
\end{equation}
Insert \eqref{eq:component-gap} and
\eqref{eq:between-component-bound} into
\eqref{eq:mixture-total-variance} and use
$\sum_{i,j}w_iw_j=1$.  This gives
\begin{equation}
\operatorname{Var}_\pi(f)
\le\frac1b\left(
1+\frac{D_\ast^2}{2w_\ast}e^{D_\ast^2/8}
\right)
\int\abs{\nabla f}^2\,\dd\pi,
\end{equation}
which is equivalent to \eqref{eq:mixture-gap}.
\end{proof}

\subsection{Static guarantees and conditional dynamics}

The gap bound completes the static hypotheses: \eqref{eq:mixture-potential} has quadratic confinement, $\nabla V(x)=Bx+O(1)$, and all positive-order derivatives of $W$ are bounded. Consequently the local kernels, coercive moment bound, compactness and separation, and global existence of every finite particle system follow. The all-time comparison with the population additionally requires \Cref{ass:euclidean-transport} for the mixture and initial density under consideration; that condition is not proved here.

\begin{corollary}[Gaussian mixtures]
\label{cor:mixture-main-summary}
Let $\pi$ be the mixture \eqref{eq:mixture-target}, with parameters \eqref{eq:mixture-parameters}.
For every integer $r>2d+10$, the static kernel, moment, separation, and empirical nonexplosion assertions of \Cref{thm:euclidean-main} hold, with the gap bound \eqref{eq:mixture-gap}.
For an initial density as in \Cref{thm:euclidean-main}, the conclusions of \Cref{thm:abstract-main} hold if the population and transport input in \Cref{ass:euclidean-transport} is verified for this mixture and this initial density. Under that additional hypothesis,
\begin{equation}
\label{eq:mixture-population-rate}
\D_{r,\pi}(\rho_t,\pi)^2
\le\frac1{bt}\left(1+\frac{D_*^2}{2w_*}e^{D_*^2/8}\right)
\KL(\rho_0\mid\pi).
\end{equation}
\end{corollary}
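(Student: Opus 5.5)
The corollary is an assembly result, so the plan is to verify that the mixture falls into the framework of \Cref{thm:euclidean-main}, with \Cref{prop:mixture-gap} in place of \Cref{prop:euclidean-gap}.

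\textbf{Structure of the potential.} From \eqref{eq:mixture-potential}, write $V=U+W$ with $U(x)=\frac12x^\top Bx$ and $W(x)=-\log\sum_jw_je^{x^\top Bm_j-\frac12m_j^\top Bm_j}$.
\begin{itemize}
\item $D^2U=B$, so $b\Id\le D^2U\le\lambda_{\max}(B)\Id$, and all derivatives of $U$ of order at least three vanish.
\item $\nabla W=-B\sum_jp_j(x)m_j$, where $p_j$ are softmax weights. This is a convex combination of the finitely many vectors $-Bm_j$, hence bounded.
\item Each higher derivative $D^kW$ is a joint cumulant of the discrete random vector $Bm_J$ under the weights $p(x)$. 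It is bounded by a constant depending only on $k$ and $\max_j|Bm_j|$.
\end{itemize}
This is exactly class \eqref{eq:quadratic-Lipschitz-class}. The confinement bounds \eqref{eq:euclidean-confinement-bounds} then follow: $V\ge\frac b2|x|^2-C|x|-C$, $|\nabla V|\ge b|x|-C$, and $\norm{D^2V}_\infty<\infty$.

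\textbf{The gap.} \Cref{prop:mixture-gap} supplies the positive gap \eqref{eq:mixture-gap}. This replaces boundedness of $W$, which is used only in \Cref{prop:euclidean-gap}.

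\textbf{Static conclusions.} With the gap, the derivative bounds, \eqref{eq:euclidean-confinement-bounds}, and $r>2d+10$, the hypotheses of \Cref{prop:euclidean-kernel} hold. That proposition gives local kernel regularity, the test-function core, the intertwining identity, and the finite diagonal integral. \Cref{prop:euclidean-moment,prop:euclidean-separation} then give the second-moment duality, $W_1$-compact sublevels, separation, and global existence of every finite particle system.

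\textbf{Main obstacle.} The step to check carefully is that these static propositions use only bounded positive-order derivatives of $W$ together with a separately established gap, never boundedness of $W$ itself. The text after \Cref{prop:euclidean-gap} asserts this. I would trace each use of $W$:
\begin{itemize}
\item the harmonic-oscillator comparison for the conjugated generator involves $|\nabla V|^2/4-\Delta V/2$, which is quadratic plus $O(|x|)$ terms;
\item the cutoff approximation for $p=|x|^2$ needs $\A_\pi p=2x\cdot\nabla V-2d$ to have polynomial growth, which holds here;
\item interior elliptic estimates need local smoothness and bounded coefficient derivatives.
\end{itemize}
None of these requires $\osc W<\infty$.

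\textbf{Dynamical part.} Assuming \Cref{ass:euclidean-transport}, the proof of \Cref{thm:euclidean-main} runs verbatim: \Cref{prop:euclidean-transport} supplies the finite-horizon comparison, and \Cref{thm:abstract-main} applies. Its population bound \eqref{eq:abstract-population-rate} holds with $\lambda_1$ replaced by the lower bound \eqref{eq:mixture-gap}. Inverting that bound gives $1/\lambda_1\le b^{-1}\bigl(1+\frac{D_\ast^2}{2w_\ast}e^{D_\ast^2/8}\bigr)$, which is exactly \eqref{eq:mixture-population-rate}.
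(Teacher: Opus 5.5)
Your proposal is correct and follows the paper's own proof. Both verify that the mixture potential lies in the class \eqref{eq:quadratic-Lipschitz-class}, run the static Euclidean arguments with \Cref{prop:mixture-gap} in place of the bounded-perturbation gap, and, under \Cref{ass:euclidean-transport}, substitute \eqref{eq:mixture-gap} into the abstract population bound. Your explicit check of where $W$ enters (the conjugated potential, the polynomial growth of $\A_\pi|x|^2$, and the local elliptic estimates) spells out what the paper only asserts in the remark after \Cref{prop:euclidean-gap}.
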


\begin{proof}[Proof of \Cref{cor:mixture-main-summary}]
Apply \Cref{prop:mixture-gap} and the static proofs in \Cref{sec:euclidean-targets,app:euclidean-calculus} to \eqref{eq:mixture-potential}. Under \Cref{ass:euclidean-transport}, the proof of \Cref{thm:euclidean-main} then applies with the mixture gap in place of the bounded-perturbation gap. Substituting \eqref{eq:mixture-gap} in the abstract population bound gives the stated rate.
\end{proof}

If the initial law is one mixture component, say
$\rho_0=\gamma_k$, then
\begin{equation}
\label{eq:component-ratio}
\frac{\dd\rho_0}{\dd\pi}
=\frac{\gamma_k}{\sum_jw_j\gamma_j}
\le\frac1{w_k},
\end{equation}
so the bounded density-ratio hypothesis is automatic.

\begin{remark}[Metastability is visible in the constant]
The lower bound \eqref{eq:mixture-gap} deteriorates exponentially when components separate in the Mahalanobis distance.  This is consistent with the slow interwell relaxation of a highly separated mixture.  When the additional population and transport hypotheses hold, the all-time particle--population conclusion applies; the population time required to reach a prescribed target accuracy may nevertheless be large.
\end{remark}

\section{Scope and further directions}
\label{sec:discussion}

The compact theorem controls the empirical measure at every physical time, with the time supremum inside the expectation.  Its last-iterate consequence is a joint limit in particle number and time: for fixed $N$, the measure remains $N$-atomic.  The proof uses finite-time approximation and dissipation of a common target energy, without a contraction estimate for the interacting $N$-particle flow.  This places the next questions at the points where that structure may change.

\subsection{Spectral approximation and time discretization}

The exact-mode theorem preserves the Stein identity and both dissipation laws while introducing a controlled resolution error.  An eigensolver or learned spectral surrogate introduces a different error.  As \eqref{eq:approx-intertwining-residual} shows, a generator residual changes the force itself and hence the target-energy law.  To extend the all-time theorem, one would need to bound that residual on empirical measures and control its accumulated energy defect.  Such a result would add a certified spectral error to the particle and resolution errors in \eqref{eq:finite-mode-last-bound}.

A time-discrete scheme faces the same requirement.  An integrator that preserves target-energy monotonicity, or satisfies a one-step inequality with a summable defect, could retain the comparison-time argument.  Since the finite-mode energy is an explicit quadratic function of the retained moments, discrete-gradient or suitably implicit schemes are natural candidates.  Their stability and consistency would require a separate analysis.

\subsection{Unbounded targets and weaker initial regularity}

For the confining Euclidean targets considered here, the static kernel and moment results provide well-defined empirical dynamics.  The remaining dynamical input is the global population regularity and transport condition in \Cref{ass:euclidean-transport}.  Proving this condition for concrete target families, including Gaussian mixtures, would turn the conditional extension into a dynamical theorem for those families.  The local kernel and duality arguments isolate this task from moment coercivity and particle nonexplosion.

The all-time principle also separates population attraction from its particular rate.  A weak Poincar\'e inequality or spectral profile could replace the gap in \eqref{eq:entropy-controls-D} if it yielded a population relaxation estimate in the same dissipated discrepancy.  More general confinement would require compatible kernel and moment estimates.  Extending initialization from bounded smooth density ratios to finite entropy would additionally require a regularity argument for the population flow.  These extensions preserve the organizing question: which target energy can be dissipated by both the population and the empirical dynamics?

\appendix
\section{Energy identities for measures and the entropy chain rule}
\label{app:atomic-regularization}

Kernel means place empirical measures and population laws in the same Hilbert space.  Once their time derivatives are identified there, the usual chain rule proves both the target-energy identity and the relative-energy identity used for Euclidean transport.  The final subsection supplies the separate entropy argument at zeros of the density.

\subsection{Kernel means and differentiation}

Write $\mathcal H=\Dom(\Q_{r,\pi}^{-1/2})$ with its centered inverse-form norm, and let $\Psi(x)=\G_{r,\pi}(\cdot,x)\in\mathcal H$.  Point evaluation gives
\[
\ip{\Psi(x)}{h}_{\mathcal H}=h(x),\qquad
\norm{\Psi(x)}_{\mathcal H}^2=\G_{r,\pi}(x,x).
\]
In the orthonormal basis $(\sqrt{q_r(\lambda_n)}\varphi_n)_{n\ge1}$ of $\mathcal H$, the coordinates of $\Psi(x)$ are $(\sqrt{q_r(\lambda_n)}\varphi_n(x))_{n\ge1}$.  This identifies it isometrically with the $\ell^2$ feature map in \eqref{eq:compact-feature-map}.
Under the local differentiated-kernel bounds proved in the target-specific sections, $\Psi$ is locally $C^1$ as a Hilbert-space-valued map.  Indeed, difference quotients converge in $\mathcal H$ by the locally uniform convergence of the mixed second derivatives of the positive spectral kernel.  For any probability measure for which the following Bochner integral exists, set
\[
 m_\mu=\int\Psi(x)\,\mu(\dd x).
\]
It exists if $\int\sqrt{\G_{r,\pi}(x,x)}\,\mu(\dd x)<\infty$.  Centering gives $m_\pi=0$, and the dual definition of the discrepancy yields
\[
 \D_{r,\pi}(\mu,\nu)=\norm{m_\mu-m_\nu}_{\mathcal H}.
\]
These statements apply to every compact-state probability measure, to every empirical measure, and to $f\pi$ with bounded $f$ in the Euclidean class because $\Tr\Q_{r,\pi}<\infty$.

For a particle curve, the finite sum defining $m_{\mu_t^N}$ is $C^1$ in $\mathcal H$ as long as the positions remain in a bounded set.  Its derivative satisfies
\begin{equation}
\label{eq:finite-rank-diff}
 \ip{\dot m_{\mu_t^N}}{h}_{\mathcal H}
 =\frac1N\sum_{i=1}^N\nabla h(X_i^N(t))\cdot\dot X_i^N(t).
\end{equation}
This identity includes all self-interactions.

For a classical population curve $\rho_t=f_t\pi$, suppose $f_t$ and $v_t$ are bounded on every finite time interval.  These bounds make the continuity equation a bounded functional on $\mathcal H$: for $h\in\mathcal H$,
\[
 \left|\int\nabla h\cdot v_t f_t\,\dd\pi\right|
 \le \norm{f_t}_{\infty}\norm{v_t}_{\infty}\norm{\nabla h}_{L^2(\pi)}
 \le C\norm{f_t}_{\infty}\norm{v_t}_{\infty}\norm{h}_{\mathcal H}.
\]
The last inequality follows spectrally from
$\lambda\le\lambda(1+\lambda)^r$.  The weak continuity equation, first on smooth test functions and then by their density in $\mathcal H$, therefore identifies a locally integrable $\mathcal H$-valued derivative of $m_{\rho_t}$.  In particular, this kernel mean is locally absolutely continuous and
\begin{equation}
\label{eq:population-mean-derivative}
 \ip{\dot m_{\rho_t}}{h}_{\mathcal H}
 =\int\nabla h\cdot v_t\,\dd\rho_t
\end{equation}
for almost every $t$.  Compact-state solutions have these bounds by \Cref{sec:compact-verification}; in the Euclidean extension they are part of the explicitly stated population hypothesis.

\subsection{Target and relative energy}

For either curve above, write $g_t=m_{\mu_t}=\PhiPot_{\mu_t-\pi}$.  The Hilbert-space chain rule gives
\[
 \frac12\frac{\dd}{\dd t}\norm{g_t}_{\mathcal H}^2
 =\ip{\dot g_t}{g_t}_{\mathcal H}
 =\int\nabla g_t\cdot v_{\mu_t}\,\dd\mu_t
 =-\int\abs{\nabla g_t}^2\,\dd\mu_t.
\]
For the particle system this is a classical identity up to its maximal existence time; the Euclidean moment bound then prevents finite-time escape.  For the population it holds almost everywhere, hence in integrated form.  This proves \eqref{eq:common-energy} under the stated hypotheses.

For the particle--population difference, set
\begin{equation}
\label{eq:eta-h-def}
\eta_t^N=\mu_t^N-\rho_t,
\qquad h_t^N=\PhiPot_{\eta_t^N}=m_{\mu_t^N}-m_{\rho_t}.
\end{equation}
Subtracting the two continuity equations and using
$v_{\mu_t^N}=v_{\rho_t}-\nabla h_t^N$ gives
\begin{equation}
\label{eq:relative-continuity}
\partial_t\eta_t^N+\nabla\cdot(\eta_t^N v_{\rho_t})
-\nabla\cdot(\mu_t^N\nabla h_t^N)=0.
\end{equation}
The same Hilbert-space chain rule separates the relative dissipation from transport by the population velocity:
\begin{align}
 \frac12\frac{\dd}{\dd t}\D_{r,\pi}(\eta_t^N)^2
 &=\int\nabla h_t^N\cdot v_{\mu_t^N}\,\dd\mu_t^N
   -\int\nabla h_t^N\cdot v_{\rho_t}\,\dd\rho_t\nonumber\\
\label{eq:relative-energy}
 &=-\int\abs{\nabla h_t^N}^2\,\dd\mu_t^N
   +\eta_t^N(v_{\rho_t}\cdot\nabla h_t^N).
\end{align}
The last term is a measure integral: its atomic part is finite, and its population part is integrable by the preceding bound.  To estimate it in the Hilbert norm, the centered transport function must also belong to that space.  When
$B_t h_t^N=v_{\rho_t}\cdot\nabla h_t^N-\pi(v_{\rho_t}\cdot\nabla h_t^N)$ belongs to $\mathcal H$, the representer identity gives
\[
 \eta_t^N(v_{\rho_t}\cdot\nabla h_t^N)
 =\ip{h_t^N}{B_t h_t^N}_{\mathcal H}.
\]
The Euclidean transport assumption specifies the domain and bound for this expression.  On compact manifolds, the direct coupling proof in \Cref{app:finite-mode-mean-field} supplies the finite-time comparison.

\subsection{The entropy chain rule at zero density}
\label{app:entropy-chain-rule}

The derivative of $s\log s$ is singular at zero, although the final dissipation involves only $\nabla f_t$.  Regularization makes this cancellation valid for classical nonnegative solutions.  On a compact state space, define
\[
 \beta_\eps(s)=(s+\eps)\log(s+\eps)-\eps\log\eps,
 \qquad \eps>0.
\]
Multiplying \eqref{eq:ratio-equation} by $\beta_\eps'(f_t)$, integrating against $\pi$, and using the continuity equation gives
\[
 \frac{\dd}{\dd t}\int\beta_\eps(f_t)\,\dd\pi
 =-\int\frac{f_t}{f_t+\eps}\,
       \nabla f_t\cdot\nabla\Q_{r,\pi}(f_t-1)\,\dd\pi.
\]
At every zero of a differentiable nonnegative function, its gradient is zero.  Thus the integrand converges pointwise everywhere to
$-\nabla f_t\cdot\nabla\Q_{r,\pi}(f_t-1)$.
On a finite interval compact-state classical regularity bounds this integrand uniformly.  Dominated convergence in time and space, together with uniform convergence of $\beta_\eps$ on bounded intervals, yields the integrated entropy identity.  Weighted integration by parts then gives \eqref{eq:entropy-dissipation}.  This proof does not require a positive lower bound on $f_0$.

On $\R^d$, the assumed finite-time $C_b^1$ bounds on the population density and velocity make the displayed gradient product integrable in time and space.  Spatial cutoffs have vanishing boundary contributions by Gaussian tails, first for fixed $\eps>0$; one then sends $\eps$ to zero as above.  Details are recorded in \Cref{app:euclidean-calculus}.  These regularity bounds are explicit dynamical hypotheses of the Euclidean theorem, separate from the static confinement assumptions.

\section{Compact spectral and heat estimates}
\label{app:compact-analysis}

Two estimates support the compact theorem.  Summable spectral tails justify differentiation of the kernels and provide bounds uniform in the cutoff.  The short-time displacement of the Langevin diffusion controls the heat approximation of Lipschitz functions.  The finite-time coupling argument using these estimates follows in \Cref{app:finite-mode-mean-field}.

\subsection{Uniform derivative tails}

The differentiated local Weyl estimate \eqref{eq:local-weyl-derivative} implies, for any $a>d/2+k$ and sufficiently large $L$,
\begin{equation}
\label{eq:compact-derivative-tail}
\sup_x\sum_{\lambda_n>L}(1+\lambda_n)^{-a}
\abs{\nabla^k\varphi_n(x)}^2
\le C_{a,k}L^{d/2+k-a}.
\end{equation}
Indeed, split the sum into $2^jL<\lambda_n\le2^{j+1}L$, bound each shell by
$C(2^jL)^{-a}(2^{j+1}L)^{d/2+k}$, and sum the convergent geometric series.  Finitely many low modes are harmless because $\lambda_1>0$.

For the scalar kernel, apply this estimate with $a=r+1$ and split derivatives between $x$ and $y$ by Cauchy--Schwarz.  In particular, derivatives of order at most two in each variable have uniformly convergent series as soon as $r>d/2+1$.  The feature map and its differential converge uniformly in $\ell^2$ by the same tail bound with $k=0,1$.  Hence the limiting feature map is continuously differentiable and has bounded differential.

The matrix kernel has coefficient $q_r(\lambda_n)/\lambda_n\asymp\lambda_n^{-r-2}$ and one gradient already in each factor.  Use \eqref{eq:compact-derivative-tail} with $a=r+2$.  The smoothing threshold \eqref{eq:md-rstar} allows, in particular, two further derivatives in each variable, so the weighted source divergence can be taken termwise.  These estimates dominate all cutoffs because $0\le\chi\le1$.  They justify the cutoff-independent $C^1$ interaction and feature bounds used in \Cref{app:finite-mode-mean-field}.

\subsection{The heat approximation estimate}

Let $P_s=e^{-s\A_\pi}$ and let $Y_s^x$ be the diffusion with generator $-\A_\pi$, started at $x$.  We estimate its displacement in smooth Euclidean coordinates, which allow a direct application of It\^o's formula.  Choose a smooth embedding $J:\X\to\mathbb R^m$.  On a compact manifold its Euclidean chord distance and the Riemannian distance satisfy
\[
C^{-1}d_\X(x,y)\le\abs{J(x)-J(y)}\le C d_\X(x,y).
\]
The local comparison follows from injectivity of the differential; compactness and injectivity give the lower bound for pairs away from the diagonal.  It\^o's formula for the finitely many smooth coordinate functions of $J$ gives a bounded drift and bounded quadratic variation density.  Thus, for $0<s\le1$,
\[
\sup_x\E d_\X(Y_s^x,x)^2
\le C\sup_x\E\abs{J(Y_s^x)-J(x)}^2\le Cs.
\]
For a $1$-Lipschitz $\phi$, it follows that
\[
\abs{P_s\phi(x)-\phi(x)}
\le\E d_\X(Y_s^x,x)\le C\sqrt s.
\]
This proves \eqref{eq:heat-lip-approx}.  The same heat semigroup is Feller; smoothness for positive times and the derivative spectral bounds justify the uniformly convergent expansion used in \Cref{prop:compact-separation}.

\section{Mean-field estimates for the full kernel and its cutoffs}
\label{app:finite-mode-mean-field}

The finite-time estimate has two parts.  First couple the interacting particles to independent population characteristics with matching initial positions.  Then estimate the sampling error of those independent characteristics in the kernel feature space.  Bounded spatial derivatives control the coupling, and bounded time derivatives of the feature paths control the sampling error over a whole interval.

We treat the full kernel and its cutoffs together, proving \Cref{prop:compact-transport,prop:finite-mode-finite-time} with the same constants.  Let $a$ denote either a cutoff $\Lambda\ge\lambda_1$ or the symbol $\infty$ for the full kernel.  Set $\chi_\infty=1$, $b_a=-\nabla_x\G^a_{r,\pi}$, and
\[
\Psi_a(x)=\bigl(\sqrt{q_r(\lambda_n)\chi_a(\lambda_n)}\varphi_n(x)\bigr)_{n\ge1},
\qquad
\chi_\Lambda(\lambda)=\chi(\lambda/\Lambda).
\]
All constants below are independent of $a$, $N$, and $T$.

\subsection{Uniform interaction and feature bounds}

By \Cref{prop:compact-regularity,app:compact-analysis},
\begin{align}
\label{eq:appendix-uniform-b}
\sup_a\left(\norm{b_a}_{L^\infty}
+\norm{\nabla_xb_a}_{L^\infty}+\norm{\nabla_yb_a}_{L^\infty}\right)&\le C,\\
\label{eq:appendix-uniform-feature}
\sup_{a,x}\left(\norm{\Psi_a(x)}_{\ell^2}
+\norm{D\Psi_a(x)}_{\mathrm{op}}\right)&\le C.
\end{align}
Uniform convergence of the derivative tails proves continuous differentiability of the full, infinite-dimensional feature map.  Its feature representation is
\begin{equation}
\label{eq:appendix-feature-representation}
\D^a_{r,\pi}(\mu,\nu)=\left\|\int\Psi_a\,\dd(\mu-\nu)\right\|_{\ell^2}.
\end{equation}

\subsection{Coupling on a compact manifold}

Let $X_i=X_i^{N,a}$ be the interacting particles and let the independent population characteristics $\bar X_i=\bar X_i^a$ solve
\begin{equation}
\label{eq:nonlinear-characteristics}
\dot{\bar X}_i(t)=\int b_a(\bar X_i(t),y)\rho_t^a(\dd y),
\qquad\bar X_i(0)=X_i(0).
\end{equation}
The population path is deterministic; therefore these characteristics are independent, with common law $\rho_t^a$.

We measure the coupling error after a smooth embedding $J:\X\to\mathbb R^m$, so squared distances can be differentiated even across the manifold's cut locus.  Compactness gives $d_\X(x,y)\asymp\abs{J(x)-J(y)}$, as explained in \Cref{app:compact-analysis}.  Put
\[
z_i=J(X_i),\quad\bar z_i=J(\bar X_i),\quad
B_a(J(x),J(y))=dJ_x\,b_a(x,y).
\]
The fields $B_a$ are uniformly bounded and Lipschitz in Euclidean chord distance on $J(\X)\times J(\X)$, by \eqref{eq:appendix-uniform-b} and smoothness of $J$.  In particular,
\[
\abs{B_a(z,w)-B_a(z',w')}\le L(\abs{z-z'}+\abs{w-w'}),
\qquad \abs{B_a}\le B.
\]

Define
\begin{equation}
\label{eq:coupling-error}
R_N(t)=\frac1N\sum_i\abs{z_i(t)-\bar z_i(t)}^2,
\end{equation}
and write the force fluctuation as
\begin{equation}
\label{eq:empirical-force-fluctuation}
\zeta_i(t)=\frac1N\sum_j B_a(\bar z_i(t),\bar z_j(t))
-\int B_a(\bar z_i(t),J(y))\rho_t^a(\dd y).
\end{equation}
The off-diagonal terms are sampling fluctuations; the self-interaction contributes one additional term of size $N^{-1}$.  Conditional on $\bar z_i(t)=z$, the terms with $j\ne i$ are independent and centered after subtracting
$\beta_t(z)=\int B_a(z,J(y))\rho_t^a(\dd y)$.  Precisely,
\[
\zeta_i=\frac1N\sum_{j\ne i}\bigl[B_a(z,\bar z_j)-\beta_t(z)\bigr]
+\frac{B_a(z,z)-\beta_t(z)}{N}.
\]
The first sum has conditional mean zero and second moment at most $4B^2(N-1)/N^2$; the last, deterministic conditional term has squared norm at most $4B^2/N^2$.  Their cross expectation vanishes.  This also covers $N=1$, and proves
\begin{equation}
\label{eq:force-fluctuation-variance}
\E\abs{\zeta_i(t)}^2\le\frac{4B^2}{N}.
\end{equation}

Let $e_i=\abs{z_i-\bar z_i}$ and $\bar e=N^{-1}\sum_i e_i$.  The velocity difference is bounded by $L(e_i+\bar e)+\abs{\zeta_i}$.  Differentiating \eqref{eq:coupling-error}, using $\bar e^2\le R_N$ and $2e_i\abs{\zeta_i}\le e_i^2+\abs{\zeta_i}^2$, gives the pathwise inequality
\begin{equation}
\label{eq:R-differential}
R_N'(t)\le (4L+1)R_N(t)+\frac1N\sum_i\abs{\zeta_i(t)}^2,
\qquad R_N(0)=0.
\end{equation}
Consequently, for every $T\ge0$,
\begin{align}
\label{eq:coupling-path-bound}
\E\sup_{0\le t\le T}R_N(t)
&\le e^{(4L+1)T}\int_0^T\frac1N\sum_i\E\abs{\zeta_i(s)}^2\,\dd s\\
&\le\frac{4B^2T}{N}e^{(4L+1)T}\le\frac{C}{N}e^{CT}.
\end{align}
With $\bar\mu_t^{N,a}=N^{-1}\sum_i\delta_{\bar X_i(t)}$, the feature Lipschitz bound and the equivalence of distances give
\begin{equation}
\label{eq:D-coupled-empirical}
\D^a_{r,\pi}(\mu_t^{N,a},\bar\mu_t^{N,a})
\le\frac1N\sum_i\norm{\Psi_a(X_i)-\Psi_a(\bar X_i)}_{\ell^2}
\le C\sqrt{R_N(t)}.
\end{equation}

\subsection{The time supremum of the empirical feature process}

The remaining error compares the independent empirical measure with its common law.  Set
\begin{equation}
\label{eq:feature-empirical-process}
Z_N(t)=\frac1N\sum_i\bigl[\Psi_a(\bar X_i(t))-\E\Psi_a(\bar X_i(t))\bigr].
\end{equation}
Then $\norm{Z_N(t)}_{\ell^2}=\D^a_{r,\pi}(\bar\mu_t^{N,a},\rho_t^a)$.  Independence and the bounded feature map imply
\begin{equation}
\label{eq:Z-fixed-time}
\E\norm{Z_N(0)}_{\ell^2}^2\le C/N.
\end{equation}
The feature paths are continuously differentiable in $\ell^2$ with derivative
\begin{equation}
\label{eq:feature-path-derivative}
F_i(t)=D\Psi_a(\bar X_i(t))\,v^a_{\rho_t^a}(\bar X_i(t)).
\end{equation}
Both factors are uniformly bounded.  Differentiation under the expectation is justified by this deterministic bound, and at each fixed time the $F_i(t)$ are independent identically distributed Hilbert-valued variables.  Therefore
\begin{equation}
\label{eq:Z-derivative-fixed-time}
\E\norm{\dot Z_N(t)}_{\ell^2}^2
=\frac1N\E\norm{F_1(t)-\E F_1(t)}_{\ell^2}^2\le C/N.
\end{equation}
The fixed-time variance alone does not control the time supremum.  Apply the fundamental theorem of calculus to the feature paths, then Cauchy--Schwarz:
\begin{equation}
\label{eq:H1-time-embedding}
\sup_{0\le t\le T}\norm{Z_N(t)}_{\ell^2}^2
\le2\norm{Z_N(0)}_{\ell^2}^2
+2T\int_0^T\norm{\dot Z_N(s)}_{\ell^2}^2\,\dd s.
\end{equation}
Hence
\begin{equation}
\label{eq:nonlinear-empirical-bound}
\E\sup_{0\le t\le T}\D^a_{r,\pi}(\bar\mu_t^{N,a},\rho_t^a)^2
\le\frac{C(1+T^2)}{N}.
\end{equation}
The squared triangle inequality now joins the coupling error \eqref{eq:D-coupled-empirical} to this sampling error.  Together with \eqref{eq:coupling-path-bound}, it proves the $Ce^{CT}/N$ bounds in \Cref{prop:compact-transport,prop:finite-mode-finite-time}.  Independence was used only across the population characteristics at a fixed time; their time dependence is controlled by \eqref{eq:H1-time-embedding}.

\subsection{The spectral resolution estimate}
\label{app:finite-resolution-proof}

To prove \Cref{prop:finite-resolution}, we need a smoothing operator that uses only retained modes and approximates every Lipschitz function uniformly.  Let $\theta$ be as in that proposition and put $\delta=\Lambda^{-1/2}$.  A closed manifold with smooth positive density is doubling and satisfies the short-time Gaussian and H\"older heat-kernel bounds for its smooth uniformly elliptic generator.  Theorem~3.4, equations~(3.10) and~(3.12), of \citet{coulhon2012heat}, applied to $L=\A_\pi$ and $f(u)=\theta(u^2)$, gives, for $\Lambda\ge1$, the kernel bounds
\begin{equation}
\label{eq:localized-cutoff-kernel}
\abs{K_\Lambda(x,y)}\le C_M\delta^{-d}
(1+d_\X(x,y)/\delta)^{-M},
\qquad\int K_\Lambda(x,y)\pi(\dd y)=1.
\end{equation}
Here $M>d+1$ is fixed; $f$ is smooth and constant near zero, as required by that theorem.  Integration in distance shells gives
\[
\sup_x\int d_\X(x,y)\abs{K_\Lambda(x,y)}\pi(\dd y)\le C\delta.
\]
Subtract $\phi(x)$ inside the kernel integral.  For a Lipschitz $\phi$ this yields
\begin{equation}
\label{eq:appendix-spectral-lip}
\norm{(\Id-\theta(\A_\pi/\Lambda))\phi}_{L^\infty}
\le C\Lambda^{-1/2}\Lip(\phi).
\end{equation}
If $\lambda_1\le\Lambda<1$, finitely many modes occur and the same bound follows by enlarging a target-dependent constant, after normalizing $\pi(\phi)=0$.  Constants are preserved exactly, so this normalization entails no loss.

The approximation error is now controlled at scale $\Lambda^{-1/2}$.  The retained part is controlled by the truncated discrepancy: for a centered signed measure $\sigma$,
\begin{align*}
\abs{\sigma(\theta(\A_\pi/\Lambda)\phi)}^2
&\le \D_{r,\pi}^\Lambda(\sigma)^2
\sum_{n:\,\theta(\lambda_n/\Lambda)\ne0}
\frac{\theta(\lambda_n/\Lambda)^2}{q_r(\lambda_n)}
\abs{\ip{\phi}{\varphi_n}_{L^2(\pi)}}^2\\
&\le C\Lambda^{r+1}\D_{r,\pi}^\Lambda(\sigma)^2\norm{\phi}_{L^2(\pi)}^2.
\end{align*}
We used $\chi=1$ on the support of $\theta$ and $\Lambda\ge\lambda_1>0$.  Together with \eqref{eq:appendix-spectral-lip}, this proves \Cref{prop:finite-resolution}.

\section{Euclidean kernels, cutoff duality, and Hilbert-space chain rules}
\label{app:euclidean-calculus}

The Euclidean argument needs both local control, to evaluate the force at particles, and control of mass at infinity. We obtain them in three stages. Ground-state conjugation gives eigenvalue bounds and an operator core, from which local kernel regularity follows. Bounded approximations to $|x|^2$ then turn the dual discrepancy into a moment bound. Finally, Hilbert-space chain rules identify the relative-energy calculation to which the transport assumption applies.

\subsection{Confinement, compact resolvent, and an operator core}

We first put the generator into a form where quadratic confinement can be compared directly with the harmonic oscillator. The unitary map
\begin{equation}
\label{eq:ground-state-unitary}
(\mathcal U f)(x)=Z^{-1/2}e^{-V(x)/2}f(x)
\end{equation}
conjugates the generator to
\begin{equation}
\label{eq:Schrodinger-transform}
\mathcal U\A_\pi\mathcal U^{-1}
=\mathcal L=-\Delta+V_{\rm eff},\qquad
V_{\rm eff}=\tfrac14|\nabla V|^2-\tfrac12\Delta V.
\end{equation}
The assumptions give
\[
c|x|^2-C\le V_{\rm eff}(x)\le C(1+|x|^2).
\]
Every derivative of $V_{\rm eff}$ has polynomial growth; derivatives of positive order grow at most linearly. The quadratic lower bound and the Rellich compactness theorem, applied on balls and combined with the bound on $\int |x|^2|g|^2\,\dd x$, give compact embedding of the form domain into $L^2(\dd x)$. Hence $\mathcal L$ has compact resolvent. The form inequality
$\mathcal L\ge-\Delta+c|x|^2-C$ and the min--max principle compare its eigenvalues to those of the harmonic oscillator. Those oscillator eigenvalues are $\sqrt c(2|\alpha|+d)$, $\alpha\in\mathbb N_0^d$. Counting multi-indices proves \eqref{eq:euclidean-counting}. In particular,
\[
\operatorname{Tr}\Q_{r,\pi}
=\sum_{n\ge1}\frac1{\lambda_n(1+\lambda_n)^r}<\infty
\quad(r>d-1).
\]

To identify the dual discrepancy using compactly supported tests, we next approximate eigenfunctions in the required graph norms. The needed tail control follows from a weighted bootstrap: each eigenfunction $g$ of $\mathcal L$ has all polynomially weighted derivatives in $L^2(\dd x)$. For a smooth bounded truncated weight $w$, multiplication of $(\mathcal L-\lambda)g=0$ by $w^2g$ gives
\begin{equation}
\label{eq:euclidean-weighted-eigenfunction}
\int\left(|\nabla(wg)|^2+(V_{\rm eff}-\lambda)w^2|g|^2\right)\dd x
=\int|\nabla w|^2|g|^2\,\dd x.
\end{equation}
Use truncated versions of $\langle x\rangle^m$ whose derivatives are bounded by $C_m\langle x\rangle^{m-1}$. The quadratic lower bound and induction on $m$ give $\langle x\rangle^m g\in L^2$ for every integer $m$, and then the same identity gives weighted first derivatives. Higher derivatives follow by differentiating the equation: for a multi-index $\alpha$,
\[
-\Delta D^\alpha g
=(\lambda-V_{\rm eff})D^\alpha g
-\sum_{0<\beta\le\alpha}\binom{\alpha}{\beta}
(D^\beta V_{\rm eff})D^{\alpha-\beta}g.
\]
On unit balls the interior estimate for $-\Delta$ controls two more derivatives by the displayed right side and the function. The coefficient bounds on each ball are polynomial in the center. Multiplying the estimates by arbitrary polynomial weights, summing over a bounded-overlap unit-ball cover, and using the already established weighted lower derivatives completes the induction. All integrations can first be made with compact cutoffs and then passed to the limit using the preceding induction step.

These weighted estimates make spatial truncation converge in every integer graph norm. Let $\chi_R(x)=\chi(x/R)$ with $\chi\in C_c^\infty$, $\chi=1$ near the unit ball. Expanding $\mathcal L^k(\chi_Rg)-\chi_R\mathcal L^kg$ by Leibniz gives finitely many polynomially bounded coefficients times weighted derivatives of $g$, supported outside the ball of radius $R$. The preceding weighted bounds show
\[
\chi_Rg\longrightarrow g
\quad\hbox{in }\operatorname{Dom}(\mathcal L^k)
\quad\hbox{for every integer }k.
\]
After conjugation, $\chi_R\varphi_n\to\varphi_n$ in every integer graph norm of $\A_\pi$. Subtracting the $\pi$ mean preserves this convergence. Finite eigenfunction sums are a core by the spectral theorem; approximation of each such sum by these cutoffs and interpolation between integer graph norms prove that centered $C_c^\infty$ tests are a core for every positive fractional power used here.

\subsection{Local evaluation and the scalar and matrix kernels}

The kernels can now be constructed as Riesz representatives of point evaluation. Smoothing makes these evaluation maps continuous, and compactness of their images will give uniform convergence of the differentiated series on compact sets.

For a compact set $K$ and integer $k$, interior elliptic estimates for $\A_\pi$ on a larger compact set give
\begin{equation}
\label{eq:euclidean-local-elliptic}
\|h\|_{H^{2k}(K)}\le C_{K,k}\|(\Id+\A_\pi)^kh\|_{L^2(\pi)}.
\end{equation}
These estimates follow by iterating the second-order interior estimate; the smooth positive density is bounded above and below on the larger compact set. Choose $k$ with
$2k>d/2+2$ and $2k\le r+1$, possible under \eqref{eq:euclidean-r-threshold}. Sobolev embedding, with a positive margin, makes evaluation of every derivative of order at most two a continuous functional on $\mathcal H$, locally H\"older-continuous in the evaluation point in the operator norm. Denote its Riesz representative by $F_{x,\alpha}\in\mathcal H$. In particular,
\[
F_{x,0}=\G_{r,\pi}(\cdot,x),\qquad
\G_{r,\pi}(x,y)=\langle F_{x,0},F_{y,0}\rangle_{\mathcal H}.
\]
The orthonormal basis of $\mathcal H$ is
$\{q_r(\lambda_n)^{1/2}\varphi_n\}_{n\ge1}$, so the Riesz coefficients give exactly the stated kernel series. The images of a compact set under $x\mapsto F_{x,\alpha}$ are compact subsets of $\mathcal H$. Orthogonal spectral projections converge uniformly on such compact subsets. Cauchy--Schwarz then proves locally uniform convergence of the differentiated scalar series, including mixed derivatives with up to two derivatives in each variable. The same argument for the multiplier $q_r(\lambda)/\lambda$, which has still greater smoothing, gives the derivatives required for the matrix lift. Applying $\divpi\nabla\varphi_n=-\lambda_n\varphi_n$ termwise proves the Stein--Green identity.

Tonelli's theorem gives
\[
\int\G_{r,\pi}(x,x)\,\dd\pi(x)=\sum_{n\ge1}q_r(\lambda_n),
\]
and therefore the initial diagonal estimate. The local nature of these estimates matters. If $\widetilde G$ is the kernel after conjugation, then
\begin{equation}
\label{eq:kernel-transform}
\G_{r,\pi}(x,y)=Z e^{(V(x)+V(y))/2}\widetilde G(x,y).
\end{equation}
No polynomial tail bound for the left side follows from a polynomial bound for $\widetilde G$.

\subsection{Riesz representation of empirical and population measures}

Point evaluation and the test-function core connect the dual norm to the particle and population formulas. For a finite empirical measure $\mu$, the functional $h\mapsto\mu(h)$ is continuous on $\mathcal H$ by local evaluation. For $\rho=f\pi$ with $f\in L^2(\pi)$, it is continuous by the gap and Cauchy--Schwarz. The dense test-function core therefore identifies their centered Riesz representatives with
\[
m_\mu=\frac1N\sum_i\G_{r,\pi}(\cdot,X_i),\qquad
m_\rho=\Q_{r,\pi}(f-1).
\]
The dual norm is $\|m_\mu-m_\rho\|_{\mathcal H}$ and has the spectral expression in the main text. For bounded densities the ordinary kernel double integrals are absolutely convergent: positivity gives
$|\G(x,y)|\le\sqrt{\G(x,x)\G(y,y)}$, and the diagonal trace makes $\int\sqrt{\G(x,x)}f(x)\,\dd\pi(x)$ finite. The same assertion for empirical measures is a finite sum. Also $\int\G(x,y)\,\dd\pi(y)=0$, either by the Riesz representation of the zero centered functional or by this integrability and spectral approximation.

The relative-energy calculation also requires $h=m_\mu-m_\rho$ to lie in $\mathcal H_+$, so that $B_t h$ is an admissible test under \Cref{ass:euclidean-transport}. This extra regularity follows from the smoothing multiplier. Indeed, local elliptic estimates also make evaluation continuous on the centered space $\operatorname{Dom}((\Id+\A_\pi)^{r/2})$. Thus for every fixed $x$,
\[
\sum_{n\ge1}(1+\lambda_n)^{-r}|\varphi_n(x)|^2<\infty.
\]
Since
$(1+\lambda)^{r+2}q_r(\lambda)^2\le C(1+\lambda)^{-r}$ on the positive spectrum, $\G(\cdot,x)\in\mathcal H_+$. Likewise $\Q(f-1)\in\mathcal H_+$ for $f\in L^2(\pi)$. Therefore $h=m_\mu-m_\rho\in\mathcal H_+$.

If $B_t:\mathcal H_+\to\mathcal H$ has the assumed mapping property, apply the Riesz identity to $B_th$. This identity agrees with the original measure action: convergence in $\mathcal H$ implies local uniform convergence for the atoms and $L^2(\pi)$ convergence for the bounded density. The function $v_t\cdot\nabla h$ is integrable against $\rho_t$, since $v_t,f_t$ are bounded and $\nabla h\in L^2(\pi)$. Centering changes no action of $\mu-\rho_t$. Thus the Hilbert-space pairing is exactly the measure pairing in \eqref{eq:euclidean-actual-pairing}.

\subsection{Monotone moment cutoffs in the energy space}

The moment argument needs bounded tests that increase to $|x|^2$ while converging in $\mathcal H$. Concavity produces the monotonicity; Gaussian tails will give graph-norm convergence. Choose a smooth nondecreasing concave $\psi:[0,\infty)\to[0,\infty)$ with $\psi(s)=s$ for $s\le1$ and $\psi$ constant for $s\ge2$. Such a function is obtained by integrating a smooth nonincreasing function equal to one near $[0,1]$ and zero on $[2,\infty)$. Set
\[
p(x)=|x|^2,\qquad p_R(x)=R^2\psi(|x|^2/R^2),\qquad R\ge1.
\]
Concavity and $\psi(0)=0$ imply $\psi(s)-s\psi'(s)\ge0$, so $p_R\uparrow p$ as $R\to\infty$. Each $p_R$ is constant outside a ball, and $p_R-p$ and all of its derivatives vanish on $|x|\le R$. For every fixed derivative order, their magnitudes are bounded by a fixed polynomial in $|x|$, independently of $R\ge1$.

The coefficients of $\A_\pi^k$ and their required derivatives have polynomial growth. Consequently $\A_\pi^kp$ has polynomial growth and belongs to $L^2(\pi)$; it need not itself be a polynomial. Applying $\A_\pi^k$ to $p_R-p$ gives a finite sum of polynomially bounded terms supported on $|x|\ge R$. The Gaussian tail of $\pi$ shows that each sum tends to zero in $L^2(\pi)$. The same argument with compact cutoffs first shows that $p$ and $p_R$ belong to the relevant operator domains, using closedness of the operator and the compact-test core proved above. Thus
\[
p_R-p\longrightarrow0
\quad\hbox{in }\operatorname{Dom}((\Id+\A_\pi)^k)
\quad\hbox{for every fixed integer }k.
\]
Choose $2k\ge r+1$, center the functions, and use the spectral theorem to obtain convergence in $\mathcal H$. This is exactly the bounded-test approximation needed for the moment proof; it precedes, and does not presume, integrability of $p$ against the unknown measure.

\subsection{Verification of the Euclidean chain-rule hypotheses}

We now check the hypotheses of the chain rules in \Cref{app:atomic-regularization}. The local differentiated evaluation bounds make $x\mapsto F_{x,0}$ continuously differentiable as an $\mathcal H$-valued map. Hence every local particle solution has a differentiable empirical representative and satisfies \eqref{eq:finite-rank-diff}. The target-energy identity \eqref{eq:common-energy} and the moment bound then give global continuation.

For the population, \Cref{ass:euclidean-transport} bounds $f_t$ and $v_t$ on each finite interval. Consequently the functional $\phi\mapsto\int f_t v_t\cdot\nabla\phi\,\dd\pi$ is bounded by $C_T\|\phi-\pi(\phi)\|_{\mathcal H}$. The test-function core proved above therefore permits the extension of the weak continuity equation used in \eqref{eq:population-mean-derivative}. The population kernel mean is locally absolutely continuous, and the common-energy chain rule applies. All population pairings are integrable by the boundedness of $f_t,v_t$ and the Dirichlet-form bound; empirical pairings are finite sums.

For the relative energy, the preceding domain argument gives $h_t=m_{\mu_t}-m_{\rho_t}\in\mathcal H_+$ and identifies the measure action with the Riesz pairing in \eqref{eq:euclidean-actual-pairing}. Thus \eqref{eq:relative-energy} applies to the empirical error. The estimate in \Cref{prop:euclidean-transport} controls its transport term and gives finite-time comparison.

It remains to justify the spatial cutoff in the entropy calculation of \Cref{app:entropy-chain-rule}. For fixed $\eps>0$, the assumed finite-time $C_b^1$ bounds on $f_t$ and $v_t$ bound the regularized entropy terms. Using the cutoffs $\chi_R$ above, the boundary contributions vanish by the Gaussian tails of $\pi$, while $\nabla f_t\cdot v_t$ is integrable in time and space. One may therefore remove the spatial cutoff and then send $\eps$ to zero by the dominated-convergence argument in \Cref{app:entropy-chain-rule}. This gives the Euclidean entropy identity and completes the dynamical verification used in \Cref{thm:euclidean-main}.

\section{From empirical measures to labelled marginals}
\label{app:labelled-marginals}

Exchangeability turns labelled particles into sampling without replacement from their empirical measure.  Independent draws from that measure use replacement, so the two tuples can be coupled until a sampled label repeats.  This gives the small correction needed to pass from empirical convergence to propagation of chaos.

Let $(X_1^N,\ldots,X_N^N)$ be exchangeable, with empirical measure
$\mu^N=N^{-1}\sum_i\delta_{X_i^N}$, and let $\nu$ be a deterministic probability measure with finite first moment.  Fix a base point $x_0\in\X$, and equip $\X^\ell$ with the additive distance
\begin{equation}
\label{eq:product-distance}
d_\ell(x,y)=\sum_{k=1}^\ell d_\X(x_k,y_k).
\end{equation}
Write $W_{1,\ell}$ for its Wasserstein distance.

\begin{proposition}[Empirical-to-labelled comparison]
\label{prop:labelled-comparison}
For $1\le\ell\le N$,
\begin{equation}
\label{eq:labelled-comparison}
W_{1,\ell}\bigl(\Law(X_1^N,\ldots,X_\ell^N),\nu^{\otimes\ell}\bigr)
\le \ell\,\E W_1(\mu^N,\nu)+\varepsilon_{N,\ell},
\end{equation}
where
\begin{equation}
\label{eq:unbounded-labelled-error}
\varepsilon_{N,\ell}\le
\frac{\ell(\ell-1)}{N}\,
\E\int d_\X(x,x_0)\mu^N(\dd x).
\end{equation}
On a compact state space one may instead take
\begin{equation}
\label{eq:compact-labelled-error}
\varepsilon_{N,\ell}\le
\frac{\ell(\ell-1)}{2N}\operatorname{diam}(\X).
\end{equation}
\end{proposition}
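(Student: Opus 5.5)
The proof is a chain of three couplings, compared in $W_{1,\ell}$ through the triangle inequality. Let $L=(X_1^N,\ldots,X_\ell^N)$ denote the labelled tuple. Let $S$ denote $\ell$ indices drawn \emph{with} replacement from $\{1,\ldots,N\}$, independently of the particles, and write $Y=(X_{S_1}^N,\ldots,X_{S_\ell}^N)$. Conditionally on the configuration, $Y$ has law $(\mu^N)^{\otimes\ell}$. The steps are: identify $\Law(L)$ with the law of a uniform draw \emph{without} replacement; couple that draw to $Y$ up to the first repeated index; and couple $(\mu^N)^{\otimes\ell}$ to $\nu^{\otimes\ell}$ conditionally on the configuration.

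\emph{Step 1 (exchangeability).} Let $\tau$ be a uniform random injection $\{1,\ldots,\ell\}\to\{1,\ldots,N\}$, independent of the particles. By exchangeability, $L$ has the same law as $(X^N_{\tau(1)},\ldots,X^N_{\tau(\ell)})$.

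\emph{Step 2 (replacement coupling).} Couple $\tau$ with $S$ in the standard way. Draw $S$ uniformly. If $S$ is injective, set $\tau=S$; on the complementary event $E$ of a repeated index, redraw $\tau$ so that its marginal remains uniform. One can do this, for instance, by replacing each repeated entry of $S$ by a fresh unused index. Then $\tau$ is uniform, because conditioning a uniform $S$ on injectivity gives a uniform injection and the repair step preserves symmetry. The key property is that $S$ and $\tau$ disagree only on $E$, and then only in the repaired coordinates. The main obstacle is bounding the resulting cost by $\frac{\ell(\ell-1)}{N}\E\int d_\X(x,x_0)\,\mu^N(\dd x)$. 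The plan is to bound each mismatched coordinate by $d_\X(X_{S_k},x_0)+d_\X(X_{\tau(k)},x_0)$. A coordinate $k$ is repaired only if $S_k=S_j$ for some $j<k$, which has probability at most $(k-1)/N$. Conditionally on that event, the indices $S_k$ and $\tau(k)$ are each marginally uniform, by symmetry and by the independence of the fresh draw. Each contributes $\E\int d_\X(x,x_0)\,\mu^N(\dd x)$, since a uniform index picks a point distributed as $\mu^N$. Summing $2(k-1)/N$ over $k\le\ell$ gives exactly $\ell(\ell-1)/N$. In the compact case one replaces the two-term bound by $d_\X\le\operatorname{diam}(\X)$ and pays only once per pair, giving $\Pr(E)\operatorname{diam}(\X)\le\binom{\ell}{2}N^{-1}\operatorname{diam}(\X)$, which is \eqref{eq:compact-labelled-error}. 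The one point that needs care is the conditional uniformity under repair. If it proves delicate, I would fall back on an explicit sequential construction: draw $\tau(k)$ uniformly from the unused indices, and set $S_k=\tau(k)$ except with probability $(k-1)/N$, in which case $S_k$ is a uniformly chosen earlier index. Both marginals are then evidently correct.

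\emph{Step 3 (product coupling).} Conditionally on the configuration, choose an optimal $W_1$ coupling of $\mu^N$ and $\nu$, and take its $\ell$-fold product as a coupling of $(\mu^N)^{\otimes\ell}$ and $\nu^{\otimes\ell}$. Its cost under $d_\ell$ is $\ell W_1(\mu^N,\nu)$. A measurable selection of optimal couplings, or an $\epsilon$-optimal one followed by $\epsilon\downarrow0$, makes the resulting mixture a coupling of $\Law(Y)$ with $\nu^{\otimes\ell}$, because $\nu$ is deterministic. Taking expectations gives the cost $\ell\,\E W_1(\mu^N,\nu)$. Adding this to the cost from Step 2 proves \eqref{eq:labelled-comparison}.
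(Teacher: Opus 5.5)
Your proof is correct and follows the paper's argument. It uses exchangeability to pass to sampling without replacement, couples with-replacement and without-replacement indices with per-step cost $2\frac{k-1}{N}\E\int d_\X(x,x_0)\,\mu^N(\dd x)$ via uniform marginals and the triangle inequality through $x_0$, and then tensorizes a $W_1$ coupling. Your fallback sequential construction is exactly the paper's coupling.

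One small slip concerns the compact case. The expected cost there is $\operatorname{diam}(\X)$ times the expected number of repaired coordinates, $\sum_{k\le\ell}\Pr(\text{coordinate }k\text{ repaired})\le\sum_{k\le\ell}(k-1)/N$, not $\Pr(E)\operatorname{diam}(\X)$. The final bound $\binom{\ell}{2}N^{-1}\operatorname{diam}(\X)$ is unaffected.
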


\begin{proof}
We first couple sampling with and without replacement, then compare the with-replacement law with $\nu^{\otimes\ell}$.  Independently of the configuration, draw $I_1,\ldots,I_\ell$ independently and uniformly from $\{1,\ldots,N\}$.  Construct distinct indices $J_k$ sequentially.  If $I_k\notin\{J_1,\ldots,J_{k-1}\}$, set $J_k=I_k$; otherwise draw $J_k$ uniformly from the remaining indices.  Conditional on the previous $J$'s, each unused index has probability
\[
\frac1N+\frac{k-1}{N}\frac1{N-k+1}=\frac1{N-k+1}
\]
of becoming $J_k$.  Thus $J$ is uniform sampling without replacement, and the probability of replacement at step $k$ is $(k-1)/N$.

Conditional on a deterministic configuration, the index construction is invariant under every permutation of the $N$ indices.  Consequently, on the replacement event, both $I_k$ and $J_k$ have uniform marginal distributions on the $N$ labels.  The triangle inequality through $x_0$ therefore bounds the conditional expected cost at step $k$ by
\[
\frac{2(k-1)}{N}\int d_\X(x,x_0)\mu^N(\dd x).
\]
Summing and averaging proves \eqref{eq:unbounded-labelled-error}.  If $\X$ is compact, use its diameter on each replacement event to obtain \eqref{eq:compact-labelled-error}.

The with-replacement tuple has conditional law $(\mu^N)^{\otimes\ell}$.  The without-replacement tuple has unconditional law $\Law(X_1^N,\ldots,X_\ell^N)$ by exchangeability.  Finally, tensorizing a coupling between $\mu^N$ and $\nu$ gives
\begin{equation}
\label{eq:product-W-bound}
W_{1,\ell}((\mu^N)^{\otimes\ell},\nu^{\otimes\ell})
\le\ell W_1(\mu^N,\nu).
\end{equation}
Average this inequality and combine it with the constructed coupling to obtain \eqref{eq:labelled-comparison}.
\end{proof}

Apply the proposition with $\nu=\pi$ at time $t_N$.  The empirical last-iterate conclusion and the uniform first-moment bound make the right-hand side tend to zero.  Applying it with $\nu=\rho_t$ and taking the supremum over $t$ gives the labelled-marginal form of uniform-in-time propagation of chaos.  The replacement step itself requires only a uniform expected first moment; the stronger moment hypothesis is used earlier to obtain empirical $W_1$ convergence on an unbounded state space.

\section*{Acknowledgment of AI assistance}
The authors used generative artificial intelligence tools to assist in developing mathematical arguments and proofs and in drafting and revising the manuscript. The authors take full responsibility for all content of this work, including the validity of the mathematical arguments and the accuracy and attribution of the text and references.

\bibliographystyle{abbrvnat}
\bibliography{green_bessel_svgd_draft}

\end{document}